
\documentclass[a4paper,11pt]{elsarticle}
\usepackage[DIV11]{typearea}
\usepackage{setspace}
\usepackage[T1]{fontenc}
\usepackage[english]{babel}
\usepackage{lmodern}
\usepackage[hidelinks]{hyperref} 
\usepackage{csquotes} 

\usepackage{algorithmicx,algpseudocode}
\usepackage[ruled,vlined,linesnumbered]{algorithm2e}

\usepackage{amsfonts,amssymb,amsmath,amsthm}
\usepackage{graphicx}  
\usepackage{marginnote}
\usepackage[dvipsnames]{xcolor}
\usepackage{enumitem}
\usepackage{tabularx}
\usepackage{subfigure}
\usepackage{mathtools}
\usepackage{bm}

\theoremstyle{definition}
\newtheorem{theorem}{Theorem}[section]

\newtheorem{definition}[theorem]{Definition}
\newtheorem{remark}[theorem]{Remark}
\newtheorem{lemma}[theorem]{Lemma}

\newtheorem{proposition}[theorem]{Proposition}

\newtheorem{example}[theorem]{Example}

\newcommand{\leqoeq}{\left\{\begin{smallmatrix}=\\\leq\end{smallmatrix}\right\}}

\newcommand{\x}{\bm{x}}
\renewcommand{\b}{\bm{b}}
\newcommand{\g}{\bm{g}}
\newcommand{\0}{\bm{0}}
\newcommand{\f}{\bm{f}}
\renewcommand{\u}{\bm{u}}
\renewcommand{\v}{\bm{v}}
\newcommand{\y}{\bm{y}}
\newcommand{\Sys}{\bm{\mathcal{S}}}



\newcommand{\hide}[1]{}

\newcommand{\R}{\mathbb{R}}


\DeclareMathOperator{\Sup}{Sup}
\DeclareMathOperator{\epsSup}{\varepsilon-Sup}
\DeclareMathOperator{\epsjSup}{\varepsilon_\textit{j}-Sup}
\DeclareMathOperator{\wSup}{wSup}
\DeclareMathOperator{\sSup}{sSup}


\DeclareMathOperator*{\vmin}{vmin}
\DeclareMathOperator*{\smin}{smin}

\DeclareMathOperator*{\argmin}{arg\,min}

\DeclareMathOperator*{\pred}{pred}

\usepackage{tikz, pgfplots}
\usetikzlibrary{arrows, shapes.misc, chains, patterns, shapes,backgrounds,fit}
\usetikzlibrary{decorations.pathreplacing}
\usepackage{xifthen}
\newcommand{\precqq}{\begin{matrix}
		\hspace{1mm}\prec{ }\\[-3mm]
		\hspace{1mm}= { }
\end{matrix}}

\tikzset{
	candidat/.style={rectangle, inner sep=0pt, minimum size=0.1cm, draw=gray, fill=gray},
	nds/.style={circle, inner sep=0pt, minimum size=0.12cm, draw=black, fill=black},
	ndns/.style={rectangle, inner sep=0pt, minimum size=0.12cm, draw=black, fill=black},
	test/.style={circle, inner sep=0pt, minimum size=0.12cm, draw=black, fill=black},
}
\pgfplotsset{compat=1.8}






\journal{European Journal of Operational Research}

\begin{document}

\begin{frontmatter}



\title{Introducing Multiobjective Complex Systems}


\author[tuk]{T.~Dietz}
\author[wup]{K.~Klamroth}
\author[wup]{K.~Kraus}
\author[tuk]{S.~Ruzika}
\author[tuk]{L.~E.~Sch\"afer}
\author[wup]{B.~Schulze}
\author[wup]{M.~Stiglmayr}
\author[clem]{M.~M.~Wiecek}

\address[tuk]{Department of Mathematics, Technische Universit\"at Kaiserslautern, Germany}
\address[wup]{School of Mathematics and Natural Sciences, University of Wuppertal, Germany}
\address[clem]{Department of Mathematical Sciences, Clemson University, SC, USA}

\begin{abstract}
This article focuses on the optimization of a complex system which is composed of several subsystems. On the one hand, these subsystems are subject to multiple objectives, local constraints as well as local variables, and they are associated with an own, subsystem-dependent decision maker. On the other hand, these subsystems are interconnected to each other by global variables or linking constraints. Due to these interdependencies, it is in general not possible to simply optimize each subsystem individually to improve the performance of the overall system. This article introduces a formal graph-based representation of such complex systems and generalizes the classical notions of feasibility and optimality to match this complex situation. Moreover, several algorithmic approaches are suggested and analyzed.

\end{abstract}

\begin{keyword}
	Complex Systems \sep Multiobjective Optimization \sep Linking \sep Decomposition



\end{keyword}

\end{frontmatter}


\section{Introduction}\label{sec:intro}
\noindent In the modern world, complex systems are pervasive and therefore of high importance to the society. Financial markets, social networks, communication systems, public health providers, cybersecurity systems, global corporations, educational organizations are all examples of complex systems that are composed of multiple but dissimilar parts in the form of subsystems that give rise to the collective behavior of the overall system\footnote{See the homepage of the New England Complex Systems Institute \url{www.necsi.edu/} for a collection of examples of complex systems as well as a colloquial, non-mathematical definition of a complex system \url{www.necsi.edu/guide/study.html}.}. 
The subsystems may be interconnected in a variety of ways and interact with one another. The performance of complex systems may be evaluated by multiple and conflicting criteria that may be different for every subsystem. The subsystems may be represented by models originating from different science or engineering disciplines, which cannot be integrated into an overall model. In the presence of this complexity, an all-in-one (AiO) system representing composition of all subsystems may not exist or may exist only virtually. In fact, there may be more than one way of making up the AiO system. 

In sciences and engineering, the term \enquote{system} is used in a variety of contexts. In mathematical sciences, a system is generally understood as an entity described by input and output relationships. In engineering, the term \enquote{system} has a more specific, application-related meaning. For example, in engineering design, the process of designing a vehicle is a complex system from two different perspectives. It involves interaction among several science or engineering disciplines (e.g.,~aerodynamics, electrical systems, control systems) and therefore
is performed within the field of multidisciplinary design optimization (MDO) that had been developed to address the multidisciplinary complexity of design \citep{Vander2007,MarLam2013}. On the other hand, a vehicle design process is typically hierarchical since it is performed at the system (vehicle) level and the subsystem level where independent teams are responsible for designing subsystems such as tires, engines, batteries, etc.  \citep{Kimetal2003,Kangetal2014}.
In business, a global corporation is a complex system since it may have divisions in various geographical locations or because its activities conducted at the international level may require different strategies from those performed at the local level \citep{Nolan2007}.
In economics (e-commerce) and artificial intelligence, the interaction of several entities with particular interests and goals is analyzed using negotiation analysis \citep{LouWAppr2016}. Each negotiator can be interpreted as a subsystem participating in the overall negotiation process. 
In biology and robotics, the behavior of agents in a multiagent context can be represented as a network-based complex system \citep{JiLiOnre2014}.

The development of mathematical models of complex systems and algorithms for decision making is in concert with other ongoing trends in sciences and engineering. One trend is motivated by the availability of massive data in management, business, and engineering applications along with increasing computing capabilities.  The other results from the intention to provide personalized services to subgroups and individuals but at the same time use mass customization when providing goods to large but diversified populations. In the following, we review modeling and methodological efforts that we find relevant to multiobjective complex systems. Because the literature on 
single-objective settings is vast and deserves a thorough review, it cannot be part of this paper. We therefore only highlight the models and methods and include a representative reference.

\paragraph{Models}
Complex systems can be conveniently modeled as a collection of subsystems represented by optimization problems. The collection may assume two basic structures:  hierarchical (centralized) or nonhierarchical (distributed). In the former, the levels of the hierarchy imply the order of decision making and the direction of the flow of information between the levels. Decision making at a given level may take place after the decision making at the higher level has been accomplished and the information flow has been sent to the lower level 
\citep{ShimHier1981,HaimHier1990,LiWuDesi2014}. A special type of hierarchical modeling is exhibited in bilevel optimization where one level problem is nested within the other \citep{ShimIshBa1997,Bard1998}.
The nonhierarchical structure allows the decision making at a subsystem to be performed independently of the other subsystems with the information being shared between the subsystems as required \citep{StumDece2003,MiguAdec2009,CiucAnin2012}. 

The optimization problems in the collection can be single objective or have multiple objective functions giving rise to multiobjective optimization problems (MOPs). Among many categories of optimization problems, MOPs make up an important class not only due to their special structure that is amenable to analytical derivations and algorithmic developments, but more importantly, due to their capability to model and quantify tradeoffs for informed decision making.

In some studies, the collection assumes the form of a network. In group decision making, \citet{FernAnou2013} represent a complex decision problem as a network of MOPs with each decision maker assigned to a node of the network. \citet{KonnVect2013} uses a network to model spatially distributed elements of complex systems encountered in transportation, communications, and economics. \citet{GW:2015} use a network of MOPs to model multiteam, multidisciplinary, and multiobjective design process.

Some complex systems are modeled as one specially structured MOP. Usually, complex systems occuring in multiparty negotiation processes are modeled as MOPs with one objective function and one feasible set per subsystem, see, e.g.,~\citet{LouWAppr2016}. \citet{diMaMult2017} study spatially distributed storm water harvesting systems, while \citet{NazMMult2016} review multiple objectives for resource management on microgrids. \citet{lewis1997} use a game theory approach to model interactions between the subsystems in MDO.  

The performance of complex system is measured by means of scalar or vector-valued objective functions that act on three types of variables typically associated with the system. The variables associated only with a specific subsystem are referred to as local, the variables associated with two or more subsystems are referred to as global, and the variables modeling the interaction between subsystems are referred to as linking \citep{AonuArec1982}. In some models, global variables play the role of linking variables \citep{FulgDece2007}. The interaction between subsystems can also be modeled more generally by linking functions \citep{TossAugm2008,TossBloc2009,gard:alge:2011}. In some studies, interacting subsystems are called \enquote{interconnected} \citep{FulgDece2007}. The type of the variables plays a fundamental role in the development of decision making methodologies for complex systems.

\paragraph{Methodology} 
Development of decision making methodologies for complex systems may be very difficult because optimal decisions for the subsystems may not be optimal for the AiO system and vice versa. A unique decision optimal for the system may not exist, or if it exists, it may be extremely difficult to be decided upon.  Furthermore, a solution methodology for finding optimal decisions for the AiO system may not exist either, or if it exists, it may be prohibitively expensive due to  difficulties such as heterogeneous functions, integrality of variables, nested problems in a bilevel structure, cost of simulation, etc. 

Due to modeling and methodological challenges, it is of interest to develop distributed decision making methodologies for computing suboptimal decisions for subsystems without ever dealing with the AiO system in its entirety but such that they are suboptimal or optimal to the AiO system. The assumed concept of (sub)optimality may be crucial for the overall success. 

Due to the complexity reflected in the structure of complex systems various solution approaches have been developed to find an optimal solution for the AiO system without dealing with this system in its entirety.  When individually optimizing the subsystems, the most important issues are the treatment of global variables and linking constraints, and coordination of the individual optimization processes among the subsystems to ensure that the AiO system is being optimized.

Global  variables can be treated in three different ways: they can be fixed while the subsystems are optimized, they can be copied to become additional local variables for the subystems at which they are present, or they can be treated as parameters. \citet{AonuArec1982} proposes an iterative process of fixing the global variables and optimizing the subsystems to bring the complex system to optimality. Since the copies must be equal to the original variables, new equality constraints are added to the model. \citet{LeveMult2016} first optimize individual subsystems treating global variables as parameters and then iterate towards an AiO optimal solution using the individual parametric optimal solutions. Linking constraints or the constraints created by the copies are typically relaxed using Lagrangian relaxation \citep{LucaExac1987,TossAugm2008,TossBloc2009,gard:alge:2011} or  penalty methods \citep{FulgDece2007,Kangetal2014}.

The coordination of the individual optimization processes is typically designed in two ways: the systems are directly coordinated \citep{DW:2015} or a master level optimization problem is added \citep{TossAugm2008}. Methods used in the coordination include the block coordinate descent \citep{TossAugm2008,DW:2015}, alternate direction method of multipliers \citep{TossBloc2009}, subgradient optimization \citep{LeveMult2016}, and evolutionary algorithms. When coordination is conducted on a network with subsystems assigned to its nodes, Lagrangian relaxation \citep{GW:2015} or a network equilibrium approach \citep{KonnVect2013} are used. 

A two-level coordination is conducted in multiparty negotiation where the goal is to find an AiO optimal solution without sharing information neither with other negotiators (subsystems) nor with a neutral mediator at the master level. In an iterative procedure the mediator gives a tentative agreement to the negotiators who separately provide their preferences with respect to this suggestion.
In the constraint proposal method each negotiator optimizes on a subset of the decision space which is defined by the mediator \citep{HeisCons2001}. Using the method of improving directions and starting from one predefined solution,  negotiators report their most preferred improving directions at that solution. The mediator decides upon a compromise direction and chooses a new starting solution along it \citep{EhtaSear2001}. Other methods are based on weighted sum scalarization and subgradient optimization \citep{LouWAppr2016} or  Lagrangian duality \citep{HeisDece1999}.
As argued by \citet{RothSome1985}, in multiparty negotiations the likelihood of reaching an AiO optimal solution is small, and if such a solution cannot be achieved, a suboptimal solution is sought \citep{TeicIden1996}. 

The reader is referred to \citet{EngaInte2010} for a comprehensive review of methods for decomposing the complex system and coordinating its subsystems to construct the AiO solution from the computed solutions of the subsystems.

In the works we reviewed above, the authors seek to find an AiO optimal solution for the complex system which, as we already emphasized, may not be possible in general. We are aware of two disciplines that recognize this issue and  in which an AiO feasible but not necessarily optimal solution is sought. In consensus optimization, 
the concepts of distributed computing and coordination on a network are combined which allows subsystems (agents) to individually optimize their own objective function while exchanging information directly or indirectly with other subsystems in the network. Through an iterative process, the subsystems attempt to reach a consensus in the form of a feasible solution that is not necessarily optimal \citep{BertPara1997}. In group decision making, the search for best agreement is not limited to AiO optimal solution but seeks a solution with a high measure of collective satisfaction \citep{FernAnou2013}.


\paragraph{Contribution and Content}
The mathematical models of complex systems and algorithms for decision making  have  provided neither a general model of a complex multiobjective system nor methods generating solutions that are relevant  if an AiO optimal solution is not available.     
In this paper, we extend the state-of-the-art theory of multiobjective optimization to model a  system whose complexity is reflected in the interaction among the subsystems, a feature that has not been addressed before in a multiobjective setting. The complexity requires that domination-based efficiency, which recognizes conflict between the objective functions, be lifted to the new concept  of system-domination-based superiority, which accounts for conflict between subsystems as well as their objective functions. 
The goal of decision making is to find superior solutions for the complex system without ever dealing with this system in its entirety.  The complex systems for which this task is easily  achieved are identified. When superior solutions are unattainable, methods for finding compromise or consensus solutions are proposed.

The paper is structured as follows. In Section~\ref{sec:def} foundations of the new theory are given and a new concept of efficiency, called superiority, is introduced. In Section~\ref{sec:achsuperiority} we develop methods and algorithms for computing superior solutions. Finally, Section~\ref{sec:compromise} deals with the definition and computation of compromise solutions. Section~\ref{sec:conclusion} summarizes the results.

\section{Foundations for a Theory of Multiobjective Complex Systems}\label{sec:def}
\noindent We first  briefly review the basic concepts in multiobjective optimization and build on them a theory of multiobjective complex systems. We define the complex system by means of a graph that implies a decomposition of the system into subsystems. The common notion of feasibility is extended to capture the conceptual difference between the subsystem feasiblity and  new requirements modeling the interaction among the  subsystems. We propose the new concept of superiority to recognize the performance of subsystems within the system and relate it to the classical concept of efficiency.

For comparing vectors $\u,\v\in\R^p$, we use the relations $\leqq, \leq$ and $<$: We write $\u\leqq \v$ if $u_i\leq v_i$ for all $i=1,\dots,p$, $\u\leqslant \v$ if $\u\leqq \v$ and $\u\neq \v$, and $\u<\v$ if $u_i<v_i$ for all $i=1,\dots,p$. 
Consider a multiobjective optimization problem 
\begin{equation*}\tag{$P$}\label{eq:P}
\begin{array}{ll}
\vmin & \f(\x) \\
\text{s.\,t.} & \x\in X
\end{array}
\end{equation*}    
with \(\f\colon \mathbb{R}^{n}\longrightarrow \mathbb{R}^p, X\subseteq \mathbb{R}^{n}\). We assume that \eqref{eq:P}  is a \emph{virtual} optimization problem containing all aspects of the complex system at hand and, therefore, we refer to this problem as the all-in-one problem (AiO problem). This means, that \eqref{eq:P} models a real-life decision making situation and the mathematical model is available but, due to the reasons discussed in Section~\ref{sec:intro}, the problem is never solved. Nevertheless, since the AiO problem is useful for developing a theory of multiobjective complex systems,  we recap notions of optimality in multiobjective optimization.

The symbol \emph{vmin} in \eqref{eq:P} denotes the operator of minimizing the objective functions according to the classical concept of efficiency or Pareto optimality. A feasible solution $\x\in X$ is called \emph{(weakly) efficient}  to problem \eqref{eq:P} (or \emph{AiO-(weakly) efficient}) if there does not exist $\bar{\x}\in X$ such that $\f(\bar{\x}) ( < ) \leqslant \f(\x)$. 
A feasible solution $\x\in X$ is called \emph{strictly efficient} to problem \eqref{eq:P}(or \emph{AiO-strictly efficient}) if there does not exist $\bar{\x}\in X\setminus\{\x\}$ such that $\f(\bar{\x}) \leqq \f(\x)$. 
The sets of efficient solutions, weakly efficient solutions, and strictly efficient solutions to \eqref{eq:P} are denoted by $E(P)$, $wE(P)$, and $sE(P)$, respectively. 

The images $\f(\x)$ of feasible solutions in the objective space are referred to as outcomes, 
their union constitutes the outcome set $\f(X)$. With $(\f(wE(P)))$ and $\f(E(P))$ we denote the sets of \textit{weakly nondominated outcomes} and \textit{nondominated outcomes}, respectively, to problem $P$.


\subsection{A Graph-Based Model and the Concept of Superiority}

\noindent For the AiO problem \eqref{eq:P}, we represent the complex system by a graph model which is associated with  \eqref{eq:P}. This graph carries information of two types: (i) the composition of the complex system consisting of subsystems and (ii) the  assignment of the variables (elements of the vector $\x$) and linking constraints to subsystems. The variables and linking constraints model the interaction among subsystems.

\begin{definition}[Complex system graph]\label{def:complexsystem}
	A \emph{complex system graph} is defined as a directed graph \(G=(\bm{\mathcal{V}}\cup\bm{\mathcal{S}}\cup\bm{\mathcal{C}}, R(\bm{\mathcal{V}},\bm{\mathcal{S}})\cup R(\bm{\mathcal{S}},\bm{\mathcal{C}}))\), where
	\begin{enumerate}[label=(\alph*)]
		\item \(\bm{\mathcal{V}}=\{{\xi}_1,\ldots,{\xi}_{|\bm{\mathcal{V}}|}\}\) denotes the set of \emph{variable nodes} associated with variables \(x_1,\ldots,x_{|\bm{\mathcal{V}}|}\) and the set of indices $\{1,\ldots,|\bm{\mathcal{V}}|\}$ of these nodes.
		\item \(\bm{\mathcal{S}}=\{\sigma_1,\ldots,\sigma_{|\bm{\mathcal{S}}|}\}\) denotes the set of \emph{subsystem nodes} associated with subsystems \(s_1,\ldots,s_{|\bm{\mathcal{S}}|}\) and the set of indices $\{1,\ldots,|\bm{\mathcal{S}}|\}$ of these nodes. 
		\item \(\bm{\mathcal{C}}=\{\kappa_1,\ldots,\kappa_{|\bm{\mathcal{C}}|}\}\) denotes the set of 
		\emph{linking nodes} associated with linking constraints \(c_1,\ldots,c_{|\bm{\mathcal{C}}|}\) and the set of indices $\{1,\ldots,|\bm{\mathcal{C}}|\}$ of these nodes.
		\item \(R(\bm{\mathcal{V}},\bm{\mathcal{S}})\subset\bm{\mathcal{V}}\times\bm{\mathcal{S}}\) denotes the set of  arcs from the nodes in \(\bm{\mathcal{V}}\) to the nodes in \(\bm{\mathcal{S}}\) and \(R(\bm{\mathcal{S}},\bm{\mathcal{C}})\subset\bm{\mathcal{S}}\times\bm{\mathcal{C}}\) denotes the set of arcs from the nodes 
		in \(\bm{\mathcal{S}}\) to the nodes in \(\bm{\mathcal{C}}\).
	\end{enumerate}
\end{definition}

Note that $G$ is a bipartite graph with independent sets $\bm{\mathcal{S}}$ and $\bm{\mathcal{V}}\cup\bm{\mathcal{C}}$.
For notational convenience, subsets of $\bm{\mathcal{V}}$, $\bm{\mathcal{S}}$, or $\bm{\mathcal{C}}$ will also denote subsets of variable nodes, subsystem nodes, or linking nodes accordingly, or the corresponding indices, which will be clear from the context. 
We refer to subsets of subsystems by $S\subseteq\Sys$, to subsets of linking constraints by $C\subseteq\bm{\mathcal{C}}$, and to subsets of objective functions by $F\subseteq\Sys$. Note that $F\subseteq S$ is most meaningful, however, we do not require this in general.

A complex system graph $G$ can be associated with an optimization problem \eqref{eq:P} to represent a feasible decomposition of this problem into a finite set of subsystems. The arcs between variable nodes and subsystem nodes indicate those subsets of variables which are relevant for the respective subsystem. The arcs between subsystem nodes and linking nodes indicate the interconnections between different subsystems by linking constraints which are spanning over several subsystems. 

\begin{figure}
	\centering
	\begin{tikzpicture}[scale=0.8]
	\node[] (Variables) at (0,10.3){Variable nodes $\bm{\mathcal{V}}$};
	\node[] (Subsystems) at (5,10.3){Subsystem nodes $\bm{\mathcal{S}}$};
	\node[] (Linking) at (10,10.3){Linking nodes $\bm{\mathcal{C}}$};
	\node[] (RVS) at (2.5,3.8){$R(\bm{\mathcal{V,S}})$};
	\node[] (RSC) at (7.5,3.8){$R(\bm{\mathcal{S,C}})$};
	\draw[->,thick] (1,3.8) to (RVS);
	\draw[->,dashed,thick] (6,3.8) to (RSC);
	\node [draw, circle,minimum size=0.8cm,inner sep=2pt] (xilast) at (0,5) {$ \xi_{|\bm{\mathcal{V}}|} $};
	\node[] (xidots) at (0,6.5) {$ \vdots $};
	\node [draw, circle,minimum size=0.8cm,inner sep=2pt] (xi2) at (0,8) {$ \xi_2 $};
	\node [draw, circle,minimum size=0.8cm,inner sep=2pt] (xi1) at (0,9.5) {$ \xi_1 $};
	\node[draw, rectangle,minimum size=0.8cm] (S1) at (5,9) {$ \sigma_1 $};
	\node[draw, rectangle,minimum size=0.8cm] (S2) at (5,7.5) {$ \sigma_2 $};
	\node[] (Sdots) at (5,6.1) {$ \vdots $};
	\node[draw, rectangle,minimum size=0.8cm] (Slast) at (5,5) {$ \sigma_{|\bm{\mathcal{S}}|}  $};
	\node[draw, diamond,minimum size=1cm,inner sep=1pt] (Link1) at (10,9) {$ \kappa_1 $};
	\node[] (Link2) at (10,7) {$ \vdots $};
	\node[draw, diamond,minimum size=1cm,inner sep=1pt] (Link3) at (10,5.3) {$ \kappa_{|\bm{\mathcal{C}}|} $};
	
	\draw[->,thick] (xi1)--(S1);
	\draw [->,thick] (xi1)--(Sdots.north west);
	\draw [->,thick] (xi1)--(S2);
	\draw [->,thick] (xi2)--(S1);
	\draw [->,thick] (xi2)--(Sdots.north west);
	\draw [->,thick] (xi2)--(Sdots.west);
	\draw [->,thick] (xidots.north east)--(S1);
	\draw [->,thick] (xidots.east)--(S2);
	\draw [->,thick] (xidots.east)--(Sdots.west);
	\draw [->,thick] (xidots.south east)--(Sdots.west);
	\draw [->,thick] (xidots.south east)--(Slast.west);
	\draw [->,thick] (xidots.east)--(Slast.west);
	\draw [->,thick] (xilast)--(Sdots.west);
	\draw [->,thick] (xilast)--(Sdots.south west);
	\draw [->,dashed,thick] (S1)--(Link1);
	\draw [->,dashed,thick] (S1)--(Link2.north west);
	\draw [->,dashed,thick] (S2)--(Link1);
	\draw [->,dashed,thick] (S2)--(Link2.west);
	\draw [->,dashed,thick] (S2)--(Link3);
	\draw [->,dashed,thick] (Sdots)--(Link2.west);
	\draw [->,dashed,thick] (Slast)--(Link2.south west);
	\draw [->,dashed,thick] (Slast)--(Link3);
	\begin{scope}[on background layer]
	\node[fill=black!15,fit=(Variables) (xilast),rounded corners] {}; 
	\node[fill=black!15,fit=(Subsystems) (Slast) ,rounded corners] {}; 
	\node[fill=black!15,fit=(Linking) (Link3) ,rounded corners] {};
	\end{scope}
	\end{tikzpicture}
	\caption{\label{fig:complexsystemgraph} Illustration of a complex system graph \(G\) with node set $\bm{\mathcal{V}}\cup\bm{\mathcal{S}}\cup\bm{\mathcal{C}}$ and arc set $R(\bm{\mathcal{V}},\bm{\mathcal{S}})\cup R(\bm{\mathcal{S}},\bm{\mathcal{C}})$.}
\end{figure}
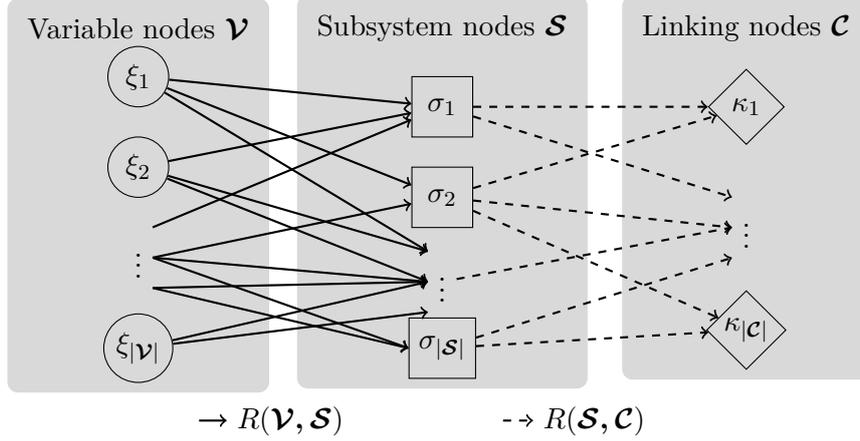


\begin{definition}[Decomposition into subsystems]\label{def:subsystems}
	Consider an optimization problem \eqref{eq:P} and a complex system graph $G$ such that $|\bm{\mathcal{V}}|=n$.
	\begin{enumerate}[label=(\alph*)]
		\item For a vector \(\x=(x_1,\ldots,x_{|\bm{\mathcal{V}}|})^T\in \mathbb{R}^{|\bm{\mathcal{V}}|}\) define a subvector 
		\(\x_{s_i}\in\mathbb{R}^{|\pred(\sigma_i)|}\) of \(\x\) such that 
		\[x_k  \text{ is a component of }\x_{s_i} \Longleftrightarrow \xi_k\in \pred(\sigma_i),\] 
		and define a subvector \(\x_{c_j}\in\mathbb{R}^{|\pred(\pred(\kappa_j))|} \) of \(\x\) such that
		\[x_k \text{ is a component of }\x_{c_j} \Longleftrightarrow \xi_k\in \pred(\pred(\kappa_j)).\]
		%
		
		\item The linking constraint \(c_j\) is of the form 
		\[ c_j(\x_{c_j}) \leqoeq 0, \quad \x_{c_j}\in\mathbb{R}^{|\pred(\pred(\kappa_j))|}.  \]
		To simplify notation, set  $c_j(\x):=c_j(\x_{c_j})$ for $\x\in \mathbb{R}^{|\bm{\mathcal{V}}|}$.
		
		\item The \emph{subsystem \(s_i\)} assumes the form of a multiobjective optimization problem \eqref{eq:subsystemi}:
		\begin{equation*}\tag{$P_{i}$}\label{eq:subsystemi}
		\begin{array}{ll}
		\vmin & \f_i(\x_{s_i}) \\
		\text{s.t.} & \x_{s_i} \in X_i \subset \mathbb{R}^{|\pred(\sigma_i)|},
		\end{array}
		\end{equation*}
		where \(\f_i\colon \mathbb{R}^{|\pred(\sigma_i)|} \longrightarrow \mathbb{R}^{p_i}\) is a vector-valued objective function associated with the subsystem $s_i\in\Sys$, and  $\sum_{i=1}^{|\bm{\mathcal{S}}|} p_i=p$. Its feasible set \(X_i\) is defined by local constraints. 
		
		\item Given all subsystems $s_i$, $i=1, \dots , |\bm{\mathcal{S} }|$,
		the \emph{AiO objective function vector} is composed of the objective functions of the subsystems, i.e.,
		\begin{equation*}
		\f(\x)=(\f_1(\x_{s_1}),\ldots,\f_{|\bm{\mathcal{S}}|}(\x_{s_{|\bm{\mathcal{S}}|}})).
		\end{equation*}
		To simplify notation, set  $\f_i(\x):=\f_i(\x_{s_i})$ for $\x\in \mathbb{R}^{|\bm{\mathcal{V}}|}$ and write \(\x\in X_j\) if and only if \(\x_{s_j}\in X_j\).
		
		\item The complex system graph \(G\) is called a \emph{decomposition for the AiO problem \eqref{eq:P}} provided
		\[\x\in X \Longleftrightarrow \x_{s_i}\in X_i \ \forall i=1,\ldots,|\bm{\mathcal{S}}| \land c_j(\x_{c_j}) \leqoeq 0 \ \forall j=1,\ldots,|\bm{\mathcal{C}}|,\] 
		where $X_i\subseteq\R^{|\pred(\sigma_i)|}$ is the individual feasible set for the subsystem $s_i\in\Sys$.
	\end{enumerate}
\end{definition}

\begin{remark}
	\begin{enumerate}[label=(\alph*)] 
		\item     A variable \(x_k\) is called a \emph{local variable} provided \(|\{\sigma_i \in \bm{\mathcal{S}}: \xi_k \in \pred(\sigma_i)\}| = 1\). A variable \(x_k\)  is called a \emph{global variable} provided \(|\{\sigma_i \in \bm{\mathcal{S}}: \xi_k \in \pred(\sigma_i)\}| > 1\).
		\item Note that a linking constraint $c_j$ only depends on variables associated with the set $\pred(\pred(\kappa_j))\subset\bm{\mathcal{V}}$ where $\pred(w)$  denotes the set of predecessors of node $w$ in $G$.
		\item Since the subsystems $s_i$, $i=1, \dots , |\bm{\mathcal{S} }|$, are MOPs themselves, the common notion of efficiency applies.
	\end{enumerate}
\end{remark}

A decomposition of the AiO problem \eqref{eq:P} based on a complex system graph~$G$  leads to the problem \eqref{eq:PG} which is equivalent to \eqref{eq:P} but, additionally, reveals some structure of the complex system: 
\begin{equation*}\tag{$P(G)$}\label{eq:PG}
\begin{array}{lll}
\smin & \multicolumn{2}{l}{(\f_1(\x_{s_1}),\ldots,\f_{|\bm{\mathcal{S}}|}(\x_{s_{|\bm{\mathcal{S}}|}}))} \\[0.1cm]
\text{s.t.} & \x_{s_i} \in X_i, & i=1,\ldots,|\bm{\mathcal{S}}|\\[0.2cm]
& c_j(\x_{c_j}) \leqoeq 0, &  j=1,\ldots,|\bm{\mathcal{C}}|.
\end{array}
\end{equation*}    



The symbol \emph{smin} in \ref{eq:PG} denotes the operator of minimizing the subsystem objective functions according to a new concept that we now introduce. Complex AiO problems often suggest naturally a decomposition into subsystems or come as a collection of (sub)problems that are interconnected by variables or linking constraints. In general, a decomposition of the AiO problem into subsystems is not unique, and different decompositions may give rise to different solution concepts and methods for the AiO problem. 

We distinguish between individual subsystem feasibility and the interaction among the subsystems which is modeled with linking constraints. The former is referred to as feasibility while the latter is referred to as consistency. In this way, we extend the classical meaning of feasibility and define the term \emph{validity} to represent solutions that are both feasible \emph{and} consistent.

\begin{definition}[Subsystem feasibility, consistency, and validity] \label{def:subsystemandlinkingfeasible}
	Consider an AiO problem~\eqref{eq:P} and its decomposition~\eqref{eq:PG} w.r.t.~an associated complex system graph~$G$.
	Let $S\subseteq\bm{\mathcal{S}}$ be a subset of subsystems and $C\subseteq\bm{\mathcal{C}}$ be a subset of linking constraints. A vector \(\x\in\mathbb{R}^{|\bm{\mathcal{V}|}}\) is called a:
	\begin{enumerate}[label=(\alph*)]
		\item \emph{feasible solution w.r.t.\ the subsystems} in \(S\) 
		(or \emph{$S$-feasible}) if it is feasible to all subsystems contained in $S$, i.\,e., $\x_{s_i}\in X_i$ for all $i\in S$. 
		The set of \(S\)-feasible solutions is denoted by \(X_{S,\emptyset}\subseteq\mathbb{R}^{|\bm{\mathcal{V}|}}\).
		\item \emph{consistent solution w.r.t.\ the linking constraints in \(C\)} 
		(or \emph{$C$-consistent}) if \( c_j(\x_{c_j}) \leqoeq 0\) holds for all $j\in C$.
		The set of \(C\)-consistent solutions is denoted by \(X_{\emptyset, C}\subseteq\mathbb{R}^{|\bm{\mathcal{V}|}}\).
		\item \emph{($S,C$)-valid solution} if it is $S$-feasible and $C$-consistent.
		The set of all $(S,C)$-valid solutions is denoted by $X_{S,C}\subseteq\mathbb{R}^{|\bm{\mathcal{V}|}}$.
		\item \emph{system feasible solution} if it is $\bm{\mathcal{S}}$-feasible.  It is called a 
		\emph{system consistent solution} if it is $\bm{\mathcal{C}}$-consistent. It is called a \emph{system valid solution} if it is $(\bm{\mathcal{S}},\bm{\mathcal{C}})$-valid.
		The all system feasible set is denoted by $X_{\bm{\mathcal{S}},\emptyset}$, the all system consistent set is denoted by $X_{\emptyset,\bm{\mathcal{C}}}$, and the system valid set is denoted by $X_{\bm{\mathcal{S}},\bm{\mathcal{C}}}=X$.
	\end{enumerate}
	
\end{definition}

To simplify notation in the case that $|S|=1$ or $|C|=1$, i.\,e., $S=\{i\}$ or $C=\{j\}$, we shortly write $X_{i,C}$, $X_{S, j}$, or $X_{i,j}$, respectively. 
Moreover, in the case that a constraint set, for example, on the level of subsystem $s_i$, is extended by additional equations or by the intersection with another constraint set $Y$,  we may alternatively write $X_{X_i\cap Y,C}$ 
and refer to the corresponding solutions as being $(X_i\cap Y,C)$-valid, 
slightly abusing the notation.
Notice that in general \(X_{i,\emptyset}=\{\x\in\mathbb{R}^{|\bm{\mathcal{V}|}}\colon\x_{s_i}\in X_i\}\neq X_i\subseteq \mathbb{R}^{|\pred(\sigma_i)|}\).


\subsection{Linking Constraints versus Global Variables}
\noindent In the following, we will discuss the interchangeable role of linking constraints and global variables. 
First we will describe how linking constraints can be reformulated by means of global variables, and later how global variables can be substituted by a specific type of linking constraints, which will be called \emph{easy-linking}.

A linking constraint $c_k(\x_{c_j})\leqoeq 0$, linking a set $S_k\subseteq\bm{\mathcal{S}}$ of subsystems ($|S_k|>1$), can be substituted by a local constraint that is added in each subsystem of $S_k$. If we do so, all local variables of all subsystems \(s_i\in S_k\) 
occurring in this added constraint $c_k(\x_{c_j})\leqoeq 0$ are now present in all subsystems in $S_k$ and thus become global variables.

On the other hand, a global variable \(x_k\) can be substituted by a local copy $x_k^{(i)}$ in each subsystem $s_i$  in which it is involved. 
These local copies are equated by a set of \emph{easy-linking} constraints of the form \(x_k^{(i)}=x_k^{(j)}\), ensuring pairwise equality for all subsystems $s_i$ and $s_j$ using the respective variable \(x_k\) (i.e.\ \(\xi_k\in\pred(\sigma_i)\cap \pred(\sigma_j) \)).
Note that pairwise easy-linking constraints ensure the equality of all local copies for all system valid solutions. 

Applying both reformulations we can adapt the graph representation of the complex system by removing the linking nodes and establishing an arborescence (minimum spanning digraph) between the nodes corresponding to local copies of one global variable. We call these arcs between local copies of a global variable \emph{easy-linking arcs}. Let \(A\) be the incidence matrix of all these easy-linking arcs, then the set of easy-linking constraints can be written as \(A\, \x = \bm0\). Then, the number of newly introduced copies of variable nodes equals $|R(\bm{\mathcal{V,S}})|-|\bm{\mathcal{V}}|$, resulting in $|R(\bm{\mathcal{V,S}})|$ variable nodes all corresponding to local variables. 

While the substitution of linking constraints by global variables does not change the feasible set of the AiO problem \eqref{eq:P}, the introduction of local copies lifts the set of system valid solutions to a higher dimension. In addition, since restrictions are interchangeably related to feasibility or to consistency, subsystem validity (and thus optimality) can be affected by both reformulations.
This has consequences as well on relaxations, bounds, iterative solution schemes and optimality concepts as discussed in Section \ref{subsec:super} below.
Even more so, the interpretation of the complex system is changed in many ways. For example, a linking constraint is usually interpreted as an interface between two departments negotiating about the solution. If this linking constraint is now moved into one of the subsystems, then the corresponding department takes the responsibility for a valid cooperation for both departments by possibly adjusting preiviously local variables of the competing department.

If all linking constraints are substituted by global variables and then all global variables are replaced by easy-linking, we obtain a complex system that contains only local variables, local constraints and easy-linking constraints. This motivates the notion of a complex system in standard form:
\begin{definition}[Standard Form]
	An optimization problem 
	\begin{equation*}\tag{$P(G)_{\text{sf}}$}\label{eq:PGSF}
	\begin{array}{lll}
	\smin & \multicolumn{2}{l}{(\f_1(\x_{s_1}),\ldots,\f_{|\bm{\mathcal{S}}|}(\x_{s_{|\bm{\mathcal{S}}|}}))} \\[0.1cm]
	\text{s.t.} & \x_{s_i} \in X_i, & i=1,\ldots,|\bm{\mathcal{S}}|\\[0.2cm]
	& A \, \x=\bm0
	\end{array}
	\end{equation*}  
	is called a \emph{complex system in standard form}.
\end{definition}

\begin{example}\label{ex:globaltoeasylinking}
	Consider the graph representation of a problem with four variables, three subsystems and one linking constraint illustrated in the left upper part of Figure~\ref{fig:exampleglobaltoeasylinking}. 
	The reformulation of linking constraints in terms of local constraints is illustrated in the left lower part of Figure~\ref{fig:exampleglobaltoeasylinking}. Thus, variable node \(\xi_1\) is also adjacent to system node \(\sigma_3\) (and \(\xi_3\) is adjacent to \(\sigma_1\)).
	The right part of Figure \ref{fig:exampleglobaltoeasylinking} shows the graph representation after the transformation into standard form, where the dashed arcs denote easy-linking constraints between two variables. 
	For the new variable vector $\x=\bigl(x_1^{(1)},x_1^{(2)},x_1^{(3)},x_2,x_3,x_4^{(1)},x_4^{(3)}\bigr)^T$, the incidence matrix representing the easy-linking arcs is given by
	$$ A= \begin{pmatrix}
	1 & -1 & 0 & 0 & 0 & 0 & 0 \\
	0 & 1 & -1 & 0 & 0 & 0 & 0 \\
	0 & 0 & 0  & 0 & 0 & 1 & -1 	
	\end{pmatrix} .$$
	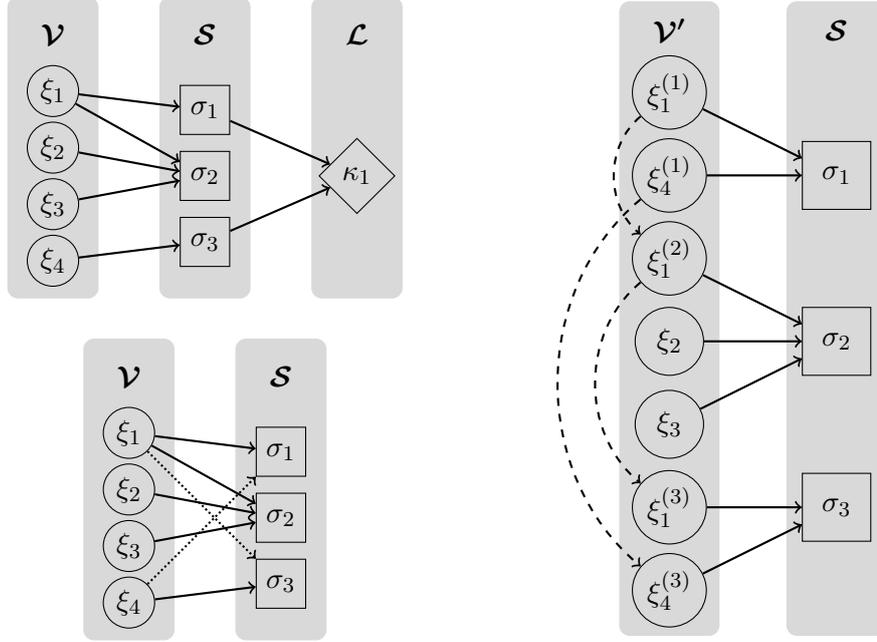
\begin{figure}[htb]
		\hfill
		\begin{minipage}{0.4\textwidth}
	\centering
	\begin{tikzpicture}[scale=0.5]
	\node[] (Variables) at (0,6){
		$\bm{\mathcal{V}}$};
	\node[] (Subsystems) at (4,6){
		$\bm{\mathcal{S}}$};
	\node[] (Linking) at (8,6){
		$\bm{\mathcal{L}}$};
	
	\node [draw, circle,minimum size=0.65cm,inner sep=2pt] (xi1) at (0,4.5) {$ \xi_1 $};
	\node [draw, circle,minimum size=0.65cm,inner sep=2pt] (xi2) at (0,3) {$ \xi_2 $};
	\node [draw, circle,minimum size=0.65cm,inner sep=2pt] (xi3) at (0,1.5) {$ \xi_3 $};
	\node [draw, circle,minimum size=0.65cm,inner sep=2pt] (xi4) at (0,0) {$ \xi_4 $};
	\node[draw, rectangle,minimum size=0.65cm,inner sep=2pt] (S1) at (4,4) {$ \sigma_1 $};
	\node[draw, rectangle,minimum size=0.65cm,inner sep=2pt] (S2) at (4,2.25) {$ \sigma_2 $};
	\node[draw, rectangle,minimum size=0.65cm,inner sep=2pt] (S3) at (4,0.5) {$ \sigma_3 $};
	\node[draw, diamond,minimum size=1cm,inner sep=1pt] (L1) at (8,2.25) {$ \kappa_1 $};
	
	\draw[->,thick] (xi1)--(S1);
	\draw[->,thick] (xi1)--(S2);
	\draw[->,thick] (xi2)--(S2);
	\draw[->,thick] (xi3)--(S2);
	\draw[->,thick] (xi4)--(S3);
	\draw [->,thick] (S1)--(L1);
	\draw [->,thick] (S3)--(L1);
	\begin{scope}[on background layer]
	\fill [fill=black!15,rounded corners] (-1.2,-1) rectangle (1.2,7);
	\fill [fill=black!15,rounded corners] (2.8,-1) rectangle (5.2,7);
	\fill [fill=black!15,rounded corners] (6.8,-1) rectangle (9.2,7);
	\end{scope}
	\end{tikzpicture}
	\\[3ex]
	\centering
	\begin{tikzpicture}[scale=0.5]
	\node[] (Variables) at (0,6){
		$\bm{\mathcal{V}}$};
	\node[] (Subsystems) at (4,6){
		$\bm{\mathcal{S}}$};
	
	\node [draw, circle,minimum size=0.65cm,inner sep=2pt] (xi1) at (0,4.5) {$ \xi_1 $};
	\node [draw, circle,minimum size=0.65cm,inner sep=2pt] (xi2) at (0,3) {$ \xi_2 $};
	\node [draw, circle,minimum size=0.65cm,inner sep=2pt] (xi3) at (0,1.5) {$ \xi_3 $};
	\node [draw, circle,minimum size=0.65cm,inner sep=2pt] (xi4) at (0,0) {$ \xi_4 $};
	\node[draw, rectangle,minimum size=0.65cm,inner sep=2pt] (S1) at (4,4) {$ \sigma_1 $};
	\node[draw, rectangle,minimum size=0.65cm,inner sep=2pt] (S2) at (4,2.25) {$ \sigma_2 $};
	\node[draw, rectangle,minimum size=0.65cm,inner sep=2pt] (S3) at (4,0.5) {$ \sigma_3 $};
	
	\draw[->,thick] (xi1)--(S1);
	\draw[->,thick,  densely dotted] (xi1)--(S3);
	\draw[->,thick] (xi1)--(S2);
	\draw[->,thick] (xi2)--(S2);
	\draw[->,thick] (xi3)--(S2);
	\draw[->,thick] (xi4)--(S3);
	\draw[->,thick, densely dotted] (xi4)--(S1);
	\begin{scope}[on background layer]
	\fill [fill=black!15,rounded corners] (-1.2,-1) rectangle (1.2,7);
	\fill [fill=black!15,rounded corners] (2.8,-1) rectangle (5.2,7);
	\end{scope}
	\end{tikzpicture}
\end{minipage}
\hfill
\begin{minipage}{0.45\textwidth}
	\begin{tikzpicture}[scale=0.55]
	\node[] (Variables) at (0,13.5){
		$\bm{\mathcal{V'}}$};
	\node[] (Subsystems) at (4,13.5){
		$\bm{\mathcal{S}}$};
	
	\node [draw, circle,minimum size=0.9cm,inner sep=2pt] (xi11) at (0,12) {$ \xi_1^{(1)} $};
	\node [draw, circle,minimum size=0.9cm,inner sep=2pt] (xi41) at (0,10) {$ \xi_4^{(1)} $};
	\node [draw, circle,minimum size=0.9cm,inner sep=2pt] (xi12) at (0,8) {$ \xi_1^{(2)} $};
	\node [draw, circle,minimum size=0.9cm,inner sep=2pt] (xi2) at (0,6) {$ \xi_2 $};
	\node [draw, circle,minimum size=0.9cm,inner sep=2pt] (xi3) at (0,4) {$ \xi_3 $};
	\node [draw, circle,minimum size=0.9cm,inner sep=2pt] (xi13) at (0,2) {$ \xi_1^{(3)} $};
	\node [draw, circle,minimum size=0.9cm,inner sep=2pt] (xi43) at (0,0) {$ \xi_4^{(3)} $};
	\node[draw, rectangle,minimum size=0.9cm,inner sep=2pt] (S1) at (4,10) {$ \sigma_1 $};
	\node[draw, rectangle,minimum size=0.9cm,inner sep=2pt] (S2) at (4,6) {$ \sigma_2 $};
	\node[draw, rectangle,minimum size=0.9cm,inner sep=2pt] (S3) at (4,2) {$ \sigma_3 $};
	
	\draw[->,thick] (xi11)--(S1);
	\draw[->,thick] (xi41)--(S1);
	\draw[->,thick] (xi12)--(S2);
	\draw[->,thick] (xi2)--(S2);
	\draw[->,thick] (xi3)--(S2);
	\draw[->,thick] (xi13)--(S3);
	\draw[->,thick] (xi43)--(S3);
	\draw[->,thick,dashed] (xi11) to [out=220,in=140] (xi12);
	\draw[->,thick,dashed] (xi12) to [out=220,in=140] (xi13);
	\draw[->,thick,dashed] (xi41) to [out=220,in=140] (xi43);
	\begin{scope}[on background layer]
	\fill [fill=black!15,rounded corners] (-1.2,-1.1) rectangle (1.2,14.2);
	\fill [fill=black!15,rounded corners] (2.8,-1.1) rectangle (5.2,14.2);
	\end{scope}
	\end{tikzpicture}
\end{minipage}
		\caption{\label{fig:exampleglobaltoeasylinking}A complex system graph before (left top), after the rewriting of linking (left bottom) and after the transformation into standard form (right). Note that all variables are local variables partially linked by easy-linking.}
	\end{figure}
\end{example}



%

\subsection{Superiority and Efficiency} \label{subsec:super}


\noindent Since the performance of feasible solutions to the complex system shall be studied with respect to the subsystems and their objective functions, we introduce the concept of \emph{system dominance}. These objective functions are specified by the set  \(F\subseteq\bm{\mathcal{S}}\) of the indices of the subsystems they belong to.

\begin{definition}[(Strict/weak) system dominance]\label{def:systemdominance}
	Consider an optimization problem~\eqref{eq:P} and its decomposition~\eqref{eq:PG} w.r.t.\ an associated complex system graph~$G$.
	Let \(S\subseteq\bm{\mathcal{S}}\) be a set of subsystems, \(F\subseteq\bm{\mathcal{S}}\) be a set of objective functions,  and \(C\subseteq \bm{\mathcal{C}}\) be a set of linking constraints. A solution $\bar{\x}\in X_{S,C}$ is said to:
	\begin{enumerate}[label=(\alph*)]
		\item  \emph{\((F,S,C)\)-strictly system dominate} $\x\in X_{S,C}$ if 
		\( \forall i\in F: \f_i(\bar{\x})\leqslant \f_i(\x).\)
		
		We write $\f(\bar{\x}) \prec_{(F,S,C)} \f(\x)$ in this case.
		\item  \emph{\((F,S,C)\)-system dominate} $\x\in X_{S,C}$ if 
		\( \forall i\in F : \f_i(\bar{\x}) \leqq \f_i(\x) \land \exists j\in F: \f_j(\bar{\x}) \leqslant \f_j(\x) .\)
		
		We write $\f(\bar{\x}) \preceq_{(F,S,C)} \f(\x)$ in this case.
		\item \emph{\((F,S,C)\)-weakly system dominate} $\x\in X_{S,C}$ if
		\(\forall i\in F : \f_i(\bar{\x}) \leqq \f_i(\x) .\)
		
		We write $\f(\bar{\x}) \precqq_{(F,S,C)} \f(\x)$ in this case.
	\end{enumerate}
\end{definition}

\begin{remark}
	Let \(F,S\subseteq\bm{\mathcal{S}}\) and \(C\subseteq \bm{\mathcal{C}}\), and consider two solutions \(\x,\bar{\x}\in X_{S,C}\). Then $\f(\bar{\x}) \prec_{(F,S,C)} \f(\x)$ implies 
	$\f(\bar{\x}) \preceq_{(F,S,C)} \f(\x)$,
	which implies $\f(\bar{\x}) \precqq_{(F,S,C)} \f(\x)$.
\end{remark}

Note that in the case that $|F|=1$, both (strict) system dominance correspond to the classical concept of dominance, while weak system dominance corresponds to the classical concept of weak dominance. 

Using the notion of system dominance, we define a new concept of optimality for complex systems making use of a new  term \emph{superior} which replaces the term \emph{efficient} for MOPs.

\begin{definition}[Superior, weakly superior, and strictly superior solutions]\label{def:efficient}
	Consider an optimization problem~\eqref{eq:P} and its decomposition~\eqref{eq:PG} w.r.t.~an associated complex system graph~$G$.
	Let  \(S\subseteq\bm{\mathcal{S}}\) denote a set of subsystems, \(F\subseteq\bm{\mathcal{S}}\) denote a set of objective functions, and \(C\subseteq \bm{\mathcal{C}}\) denote a set of linking constraints. An \((S,C)\)-valid  solution \(\x\in X_{S,C}\) is called:
	\begin{enumerate}[label=(\alph*)]
		\item \emph{\((F,S,C)\)-weakly superior} if 
		\(\nexists \bar{\x}\in X_{S,C}: \f(\bar{\x})\prec_{(F,S,C)} \f(\x).\)
		
		The set of all $(F,S,C)$-weakly superior solutions is denoted by \(\wSup(F,S,C)\).
		\item \emph{\((F,S,C)\)-superior} if 
		\(\nexists \bar{\x}\in X_{S,C}: \f(\bar{\x})\preceq_{(F,S,C)} \f(\x) .\)
		
		The set of all $(F,S,C)$-superior solutions is denoted by \(\Sup(F,S,C)\).
		\item \emph{\((F,S,C)\)-strictly superior} if 
		\(\nexists \bar{\x}\in X_{S,C}\setminus\lbrace \x\rbrace:  \f(\bar{\x}) \precqq_{(F,S,C)} \f(\x).\)
		 
		The set of all $(F,S,C)$-strictly superior solutions is denoted by \(\sSup(F,S,C)\).
	\end{enumerate}
\end{definition}

To simplify notation in the case that $|F|=1$, $|S|=1$ or $|C|=1$, i.\,e., $F=\{i\}$, $S=\{j\}$ or $C=\{k\}$, we shortly write $\Sup(i,S,C)$, $\Sup(F,j,C)$, or $\Sup(F,S,k)$, respectively. The same applies to the notation of weakly and strictly superior sets.

\begin{remark} Under the assumptions of Definition \ref{def:efficient}, the following holds:
	\begin{enumerate}
		\item In the case that $|F|=1$, both concepts of weakly superior and superior solutions reduce to the classical concept of efficient solutions, while the concept of strict superior solutions reduces to the classical concept of strictly efficient solutions.
		\item For every subsystem $s_i$, the problem \eqref{eq:subsystemi} is equivalent to the case $F=S=\{i\}$ and $C=\emptyset$, which implies that $E(P_i) = \Sup(i,i,\emptyset)$ and $wE(P_i) = \wSup(i,i,\emptyset)$. 
	\end{enumerate}
\end{remark}


The concept of superiority is conditioned by subsets of objective functions, subsystem constraints, and linking constraints. It thus allows a meaningful comparison of the superior sets for different \emph{coalitions} among the subsystems. In this respect, a coalition may be interpreted as a triple $(F,S,C)$, where usually $F\subseteq S$ and $C$ contains all linking constraints that interrelate the subsystems in $S$. When adding additional objectives and/or constraints, then the respective superior sets are contained within each other as can be seen from the following results.

\begin{proposition}\label{prop:FsupToSsup}
	Let  \(S\subseteq\bm{\mathcal{S}}\) denote a set of subsystems, let \(F_1,F_2\subseteq\bm{\mathcal{S}}\) denote two sets of objective functions such that $\emptyset\neq F_1\subset F_2$, and let \(C\subseteq \bm{\mathcal{C}}\) denote a set of linking constraints. Consider a solution $\x \in X_{S,C}$. If $\x$ is
	\begin{enumerate}[label=(\alph*)]
		\item $(F_{1},S,C)$-weakly superior, then $\x$ is $(F_2,S,C)$-weakly superior, i.e.,\\ $\wSup(F_1,S,C)\subseteq\wSup(F_2,S,C).$
		\item $(F_{1},S,C)$-strictly superior, then $\x$ is $(F_2,S,C)$-stricty superior, i.e.,\\ $\sSup(F_1,S,C)\subseteq\sSup(F_2,S,C).$
	\end{enumerate}
\end{proposition}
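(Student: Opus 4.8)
The plan is to prove both inclusions by contraposition, exploiting the fact that both the strict system dominance relation $\prec_{(F,S,C)}$ and the weak system dominance relation $\precqq_{(F,S,C)}$ of Definition~\ref{def:systemdominance} are defined by a \emph{universal} quantifier ranging over the index set $F$. The central observation is a monotonicity property: enlarging $F$ from $F_1$ to $F_2$ only makes it harder for a candidate solution to dominate, so any dominator for the larger set $F_2$ is automatically a dominator for the smaller set $F_1$. Since $S$ and $C$ are held fixed, the valid set $X_{S,C}$ over which we search for dominators is identical throughout, which removes any complication from the feasibility side.

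For part (a), I would assume that $\x$ is \emph{not} $(F_2,S,C)$-weakly superior and deduce that it is not $(F_1,S,C)$-weakly superior. By Definition~\ref{def:efficient}(a), the assumption yields some $\bar{\x}\in X_{S,C}$ with $\f(\bar{\x})\prec_{(F_2,S,C)}\f(\x)$, that is, $\f_i(\bar{\x})\leqslant\f_i(\x)$ for all $i\in F_2$. Because $F_1\subset F_2$, the same inequalities hold in particular for every $i\in F_1$, so $\f(\bar{\x})\prec_{(F_1,S,C)}\f(\x)$. This exhibits a strict system dominator of $\x$ with respect to $(F_1,S,C)$, contradicting $(F_1,S,C)$-weak superiority; the contrapositive gives the claimed inclusion $\wSup(F_1,S,C)\subseteq\wSup(F_2,S,C)$.

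For part (b), I would run the identical argument with the weak system dominance relation $\precqq$ in place of $\prec$. Assuming $\x$ is not $(F_2,S,C)$-strictly superior, Definition~\ref{def:efficient}(c) supplies some $\bar{\x}\in X_{S,C}\setminus\{\x\}$ with $\f_i(\bar{\x})\leqq\f_i(\x)$ for all $i\in F_2$; restricting to $F_1\subset F_2$ gives $\f(\bar{\x})\precqq_{(F_1,S,C)}\f(\x)$ with the same $\bar{\x}\neq\x$, contradicting $(F_1,S,C)$-strict superiority.

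There is no real technical obstacle here — the result is a direct consequence of the quantifier structure of the dominance relations. The only point requiring care is to match each superiority notion to the correct underlying relation ($\wSup$ to $\prec$, $\sSup$ to $\precqq$) and to observe that expanding the objective-function set can only shrink the set of admissible dominators. It is worth noting why the analogous inclusion fails for the intermediate notion $\Sup$: the relation $\preceq_{(F,S,C)}$ couples the universal clause $\forall i\in F\colon\f_i(\bar{\x})\leqq\f_i(\x)$ with an \emph{existential} clause $\exists j\in F\colon\f_j(\bar{\x})\leqslant\f_j(\x)$, and a witness $j$ for $F_2$ need not lie in $F_1$. This breaks the restriction step and explains why the proposition is stated only for the weakly and strictly superior sets.
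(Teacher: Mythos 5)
Your proof is correct and follows essentially the same route as the paper's: contraposition, taking a dominator $\bar{\x}$ for the larger index set $F_2$ and restricting the universally quantified inequalities to $F_1\subset F_2$, for $\prec$ in part (a) and $\precqq$ in part (b). Your closing remark on why the argument breaks for $\Sup$ (the existential witness $j\in F_2$ need not lie in $F_1$) matches the paper's own caveat, which it illustrates with Example~\ref{ex:prop:FsupToSsup}.
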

\begin{proof}
	\begin{enumerate}[label=(\alph*)]
		\item Suppose that $\x\in X_{S,C}$ and $\x \notin  \wSup(F_2,S,C)$. Then there exists a solution $\bar{\x} \in X_{S,C}$ such that $\f(\bar{\x}) \prec_{(F_2,S,C)} \f(\x)$, i.e., $\f_i(\bar{\x})\leqslant\f_i(\x)$ for all $i\in F_2$. Since $F_1\subset F_2$, this immediately implies that $\f(\bar{\x}) \prec_{(F_1,S,C)} \f(\x)$. 
		Then, $\x\not\in\wSup(F_1,S,C)$.
		%
		%
		\item Suppose that $\x\in X_{S,C}$ and $\x \notin  \sSup(F_2,S,C)$. Then there exists a solution $\bar{\x} \in X_{S,C}\setminus\{\x\}$ such that $\f(\bar{\x}) \precqq_{(F_2,S,C)} \f(\x)$, i.e., $\f_i(\bar{\x})\leqq\f_i(\x)$ for all $i\in F_2$. Since $F_1\subset F_2$, this immediately implies that $\f(\bar{\x}) \precqq_{(F_1,S,C)} \f(\x)$. Then, $\x\not\in\wSup(F_1,S,C)$.\qedhere
	\end{enumerate}
\end{proof}

Note that Proposition \ref{prop:FsupToSsup} implies in particular that for all $F\subseteq\bm{\mathcal{S}}$, $F\neq\emptyset$, we have that
$\wSup(F,\bm{\mathcal{S}},\bm{\mathcal{C}})\subseteq\wSup(\bm{\mathcal{S}},\bm{\mathcal{S}},\bm{\mathcal{C}})$ and $\sSup(F,\bm{\mathcal{S}},\bm{\mathcal{C}})\subseteq\sSup(\bm{\mathcal{S}},\bm{\mathcal{S}},\bm{\mathcal{C}})$.
Moreover, $\Sup(F_1,S,C)\subseteq\wSup(F_2,S,C)$ whenever $\emptyset\neq F_1\subseteq F_2$. However, in general we do \emph{not} have $\Sup(F_1,S,C)\subseteq\Sup(F_2,S,C)$ as can be seen from Example \ref{ex:prop:FsupToSsup} below.

\begin{example}\label{ex:prop:FsupToSsup} Consider a complex system \eqref{eq:P} decomposed into two subsystems $s_1$ and $s_2$ with equal feasible sets $X_1=X_2=\{\x\in\mathbb{R}^2 : 0\leq x_i\leq 1,\, i=1,2\}$, no linking constraints, and objective functions $\f_1(\x)=(x_1)$ and $\f_2(\x)=(x_2)$. Then $\f(X_{\{1,2\},\emptyset})=[0,1]\times[0,1]$, $\Sup(1,\{1,2\},\emptyset)=\{0\}\times[0,1]$ and 
	$\Sup(\{1,2\},\{1,2\},\emptyset)=(\{\0\})$. 
\end{example}

\begin{proposition}\label{prop:subsetsup}
	Let  \(S_1,S_2\subseteq\bm{\mathcal{S}}\) denote two sets of subsystems such that $S_1\subseteq S_2$, and let \(F\subseteq\bm{\mathcal{S}}\) denote a set of objective functions.
	Consider a solution $\x \in X_{S_2,\bm{\mathcal{C}}}$, i.e., \(\x\) is \((S_2,\bm{\mathcal{C}})\)-valid for \((P(G))\). If \(\x\) is 
	\begin{enumerate}[label=(\alph*)]
		\item \((F,S_1,\bm{\mathcal{C}})\)-weakly superior, then \(\x\) is \((F,S_2,\bm{\mathcal{C}})\)-superior, i.e.,\\
		$\wSup(F,S_1,\bm{\mathcal{C}})\subseteq\wSup(F,S_2,\bm{\mathcal{C}}).$
		\item \((F,S_1,\bm{\mathcal{C}})\)-superior, then \(\x\) is \((F,S_2,\bm{\mathcal{C}})\)-superior, i.e.,\\
		$\Sup(F,S_1,\bm{\mathcal{C}})\subseteq\Sup(F,S_2,\bm{\mathcal{C}}).$
		\item \((F,S_1,\bm{\mathcal{C}})\)-strictly superior, then \(\x\) is \((F,S_2,\bm{\mathcal{C}})\)-strictly superior, i.e.,\\
		$\sSup(F,S_1,\bm{\mathcal{C}})\subseteq\sSup(F,S_2,\bm{\mathcal{C}}).$
	\end{enumerate}
\end{proposition}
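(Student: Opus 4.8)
The plan is to reduce all three parts to a single structural observation: enlarging the index set of subsystems can only shrink the set of valid solutions. First I would record that, by Definition~\ref{def:subsystemandlinkingfeasible}, membership in $X_{S,\bm{\mathcal{C}}}$ requires $\x_{s_i}\in X_i$ for every $i\in S$ together with $\bm{\mathcal{C}}$-consistency. Since $S_1\subseteq S_2$ imposes all the subsystem-feasibility conditions for $S_1$ plus possibly more, every $(S_2,\bm{\mathcal{C}})$-valid solution is in particular $(S_1,\bm{\mathcal{C}})$-valid, so that
\[
X_{S_2,\bm{\mathcal{C}}}\subseteq X_{S_1,\bm{\mathcal{C}}}.
\]
This is the only place where the hypothesis $S_1\subseteq S_2$ enters; everything else is a direct consequence of this reversed inclusion.

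The second ingredient I would stress is that the three system-dominance relations $\prec_{(F,\,\cdot\,,\,\cdot\,)}$, $\preceq_{(F,\,\cdot\,,\,\cdot\,)}$, and $\precqq_{(F,\,\cdot\,,\,\cdot\,)}$ of Definition~\ref{def:systemdominance} compare only the objective vectors $\f_i$ for $i\in F$; the subscripts $S$ and $C$ merely record that the two compared points are required to lie in $X_{S,C}$. Hence, for points that lie in both $X_{S_2,\bm{\mathcal{C}}}$ and $X_{S_1,\bm{\mathcal{C}}}$, each relation holds for the triple $(F,S_2,\bm{\mathcal{C}})$ exactly when it holds for $(F,S_1,\bm{\mathcal{C}})$.

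With these two facts the argument is a contrapositive for each part, and I would give part~(b) as the template. Take $\x\in X_{S_2,\bm{\mathcal{C}}}$ and suppose it is \emph{not} $(F,S_2,\bm{\mathcal{C}})$-superior. Then some $\bar{\x}\in X_{S_2,\bm{\mathcal{C}}}$ satisfies $\f(\bar{\x})\preceq_{(F,S_2,\bm{\mathcal{C}})}\f(\x)$. By the inclusion $X_{S_2,\bm{\mathcal{C}}}\subseteq X_{S_1,\bm{\mathcal{C}}}$ both $\x$ and $\bar{\x}$ lie in $X_{S_1,\bm{\mathcal{C}}}$, and by the second fact the identical objective comparison yields $\f(\bar{\x})\preceq_{(F,S_1,\bm{\mathcal{C}})}\f(\x)$. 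Thus $\x\notin\Sup(F,S_1,\bm{\mathcal{C}})$, which is the contrapositive of the claimed implication. Parts~(a) and~(c) follow verbatim after replacing $\preceq$ by $\prec$ (and $\Sup$ by $\wSup$) and by $\precqq$ (and $\Sup$ by $\sSup$), respectively; in the strict case one only notes in addition that $\bar{\x}\in X_{S_2,\bm{\mathcal{C}}}\setminus\{\x\}$ still lies in $X_{S_1,\bm{\mathcal{C}}}\setminus\{\x\}$.

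I expect no genuine obstacle here: the entire content is the reversed set inclusion $X_{S_2,\bm{\mathcal{C}}}\subseteq X_{S_1,\bm{\mathcal{C}}}$, which runs opposite to the index inclusion $S_1\subseteq S_2$ and is the one point worth stating carefully. The only things to double-check are that the standing hypothesis $\x\in X_{S_2,\bm{\mathcal{C}}}$ is really used—it guarantees that the asserted superiority is evaluated over the correct, smaller valid set—and that fixing $C=\bm{\mathcal{C}}$ throughout is harmless, since the linking set plays no active role in the objective comparison.
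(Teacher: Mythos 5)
Your proof is correct and rests on exactly the same key observation as the paper's, namely the reversed inclusion $X_{S_2,\bm{\mathcal{C}}}\subseteq X_{S_1,\bm{\mathcal{C}}}$ of valid sets; the only cosmetic difference is that you argue by contrapositive where the paper argues directly. No gaps.
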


\begin{proof} We prove part (b); parts (a) and (c) follow analogously.
	Let \(\x\in X_{S_2,\bm{\mathcal{C}}}\) be \((F,S_1,\bm{\mathcal{C}})\)-superior. Then, there does not exist  \(\bar{\x} \in X_{S_1,\bm{\mathcal{C}}}\) with \( \f_i(\bar{\x}) \leqslant \f_i(\x)\) for all $i\in F$. Since \(X_{S_2,\bm{\mathcal{C}}} \subseteq X_{S_1,\bm{\mathcal{C}}}\), this implies that there does not exist \(\bar{\x} \in X_{S_2,\bm{\mathcal{C}}}\) with \( \f_i(\bar{\x}) \leqslant \f_i(\x)\) for all $i\in F$. Therefore, \(\x\) is \((F,S_2,\bm{\mathcal{C}})\)-superior for \((P(G))\).
\end{proof}

Note that Proposition \ref{prop:subsetsup} implies in particular that for all $i\in\bm{\mathcal{S}}$ and for all $S_1\subseteq S_2\subseteq\bm{\mathcal{S}}$ we have that
$\wSup(i,S_1,\bm{\mathcal{C}})\subseteq\wSup(i,S_2,\bm{\mathcal{C}})$,
$\Sup(i,S_1,\bm{\mathcal{C}})\subseteq\Sup(i,S_2,\bm{\mathcal{C}})$, and 
$\sSup(i,S_1,\bm{\mathcal{C}})\subseteq\sSup(i,S_2,\bm{\mathcal{C}})$. The converse of Proposition \ref{prop:subsetsup} is in general not true, 
cf. Example~\ref{example:prop:subsetsup}.
\begin{example}\label{example:prop:subsetsup} 
	Consider a complex system decomposed into the two subsystems $s_1$ with $X_1=\{\x\in\R^2: x_1, x_2 \geq 0\}$ and \(\f_1(\x) = (x_1+x_2)\), and $s_2$ with \(X_2 = \{\x\in\R^2: x_1+x_2 \geq 2\}\) and \(\f_2(\x) = (0)\). Suppose that there are no linking constraints, i.e., $\bm{\mathcal{C}}=\emptyset$.
	The vector \(\x = (1,1)^T\) is \((1,\{1,2\},\emptyset)\)-superior but not \((1,1,\emptyset)\)-superior, since \((0,0)^T\) is \((1,\emptyset)\)-valid and thus $(0,0)^T\prec_{(1,1,\emptyset)}(1,1)^T$. See Figure \ref{fig:prop:subsetsup} for an illustration.
\end{example}

\begin{figure}
	\centering
\begin{tikzpicture}[scale=0.5]
\draw (0,0) -- (0,-0.08);
\draw (1,0) -- (1,-0.08);
\draw (2,0) -- (2,-0.08);
\node [below] at (2,0) {$1$};
\draw (3,0) -- (3,-0.08);
\draw (4,0) -- (4,-0.08);
\node [below] at (4,0) {$2$};
\node [below] at (5,0) {$x_1$};
\draw (0,0) -- (-0.08,0);
\draw (0,1) -- (-0.08,1);
\draw (0,2) -- (-0.08,2);
\node [left] at (0,2) {$1$};
\draw (0,3) -- (-0.08,3);
\draw (0,4) -- (-0.08,4);
\node [left] at (0,4) {$2$};
\node [left] at (0,5) {$x_2$};
\path [fill=lightgray] (0,0) -- (4.8,0) -- (4.8,4.8) -- (0,4.8) -- (0,0);
\node at (3.8,3.8) {$X_{1,\emptyset}$};
\draw [->] (0,0) -- (5,0);
\draw [->] (0,0) -- (0,5);
\draw [fill=white] (2,2) circle (.5ex);
\draw [fill=black] (0,0) circle (.5ex);
\end{tikzpicture}\hspace{3em}
\begin{tikzpicture}[scale=0.5]
\draw (0,0) -- (0,-0.08);
\draw (1,0) -- (1,-0.08);
\draw (2,0) -- (2,-0.08);
\node [below] at (2,0) {$1$};
\draw (3,0) -- (3,-0.08);
\draw (4,0) -- (4,-0.08);
\node [below] at (4,0) {$2$};
\node [below] at (5,0) {$x_1$};
\draw (0,0) -- (-0.08,0);
\draw (0,1) -- (-0.08,1);
\draw (0,2) -- (-0.08,2);
\node [left] at (0,2) {$1$};
\draw (0,3) -- (-0.08,3);
\draw (0,4) -- (-0.08,4);
\node [left] at (0,4) {$2$};
\node [left] at (0,5) {$x_2$};
\path [fill=lightgray] (0,4) -- (4,0) -- (4.8,0) -- (4.8,4.8) -- (0,4.8) -- (0,4) ;
\draw  (4,0) -- (0,4);
\node at (3.5,3.8) {$X_{\{1,2\},\emptyset}$};
\draw [->] (0,0) -- (5,0);
\draw [->] (0,0) -- (0,5);
\draw [fill=white] (2,2) circle (.5ex);
\draw [fill=black] (0,0) circle (.5ex);
\end{tikzpicture}
	\caption{\label{fig:prop:subsetsup} Illustration of Example~\ref{example:prop:subsetsup}. \((1,1)^T\) is marked white, \((0,0)^T\) is marked black.}
\end{figure}
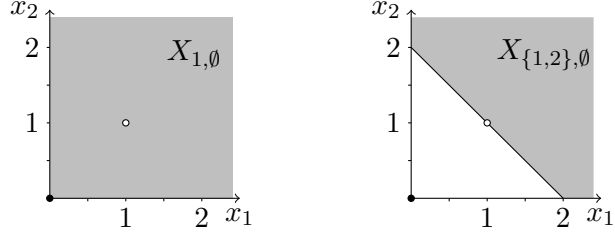

\subsection{Relation to AiO Efficiency}

\noindent In the following, the new concept of superiority for problem \eqref{eq:PG} is examined and contrasted with the classical concept of efficiency for problem \eqref{eq:P}. We first show equivalence of these two concepts in the presence of all subsystems and all their objective functions.

\begin{proposition}\label{lemma:superiority_versus_efficieny}
	Let $\x\in X$, i.e., \(\x\in\mathbb{R}^{|\bm{\mathcal{V}|}}\) is system valid. Then the following holds:
	\begin{enumerate}[label=(\alph*)]
		\item $\x$ is $(\bm{\mathcal{S}},\bm{\mathcal{S}},\bm{\mathcal{C}})$-superior for \eqref{eq:PG} if and only if $\x$ is efficient for \eqref{eq:P}.
		\item If $\x$ is $(\bm{\mathcal{S}},\bm{\mathcal{S}},\bm{\mathcal{C}})$-weakly superior for \eqref{eq:PG}, then  $\x$ is weakly efficient for \eqref{eq:P}. 
		\item $\x$ is $(\bm{\mathcal{S}},\bm{\mathcal{S}},\bm{\mathcal{C}})$-strictly superior for \eqref{eq:PG} if and only if $\x$ is strictly efficient for \eqref{eq:P}. 
	\end{enumerate}
\end{proposition}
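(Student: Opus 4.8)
The plan is to reduce all three claims to correspondences between the system-dominance relations of Definition~\ref{def:systemdominance}, specialized to $F=S=\bm{\mathcal{S}}$ and $C=\bm{\mathcal{C}}$, and the classical vector relations $<$, $\leqslant$, $\leqq$ applied to the full outcome vector $\f(\x)$. The structural fact driving everything is that $\f(\x)=(\f_1(\x_{s_1}),\dots,\f_{|\bm{\mathcal{S}}|}(\x_{s_{|\bm{\mathcal{S}}|}}))$ is the concatenation of the subsystem outcomes, so a property that holds blockwise over all $i\in\bm{\mathcal{S}}$ is the same as the corresponding componentwise property on $\f$. Moreover, with $F=S=\bm{\mathcal{S}}$ and $C=\bm{\mathcal{C}}$ the set over which dominators are sought is $X_{\bm{\mathcal{S}},\bm{\mathcal{C}}}=X$, which is exactly the feasible set appearing in the efficiency definitions for \eqref{eq:P}. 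Each part then follows by contraposing the relevant \enquote{no dominator exists} statement.

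First I would settle part~(c). The weak system-dominance relation $\precqq_{(\bm{\mathcal{S}},\bm{\mathcal{S}},\bm{\mathcal{C}})}$ requires $\f_i(\bar{\x})\leqq\f_i(\x)$ for all $i\in\bm{\mathcal{S}}$; since the blocks $\f_i$ together list all $p$ coordinates of $\f$, this is verbatim $\f(\bar{\x})\leqq\f(\x)$. Substituting this identity of relations into the definition of $(\bm{\mathcal{S}},\bm{\mathcal{S}},\bm{\mathcal{C}})$-strict superiority and into that of AiO-strict efficiency, both of which range over $\bar{\x}\in X\setminus\{\x\}$, makes the two non-existence statements coincide, which gives~(c).

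The core of the argument is part~(a), where I must show that $\preceq_{(\bm{\mathcal{S}},\bm{\mathcal{S}},\bm{\mathcal{C}})}$ coincides with $\leqslant$. If $\f(\bar{\x})\leqslant\f(\x)$, then $\f(\bar{\x})\leqq\f(\x)$ supplies the universally quantified blockwise inequalities, while a coordinate at which $\f(\bar{\x})$ and $\f(\x)$ differ is necessarily a strict decrease and lies in some block $j$, yielding $\f_j(\bar{\x})\leqslant\f_j(\x)$ and hence $\f(\bar{\x})\preceq_{(\bm{\mathcal{S}},\bm{\mathcal{S}},\bm{\mathcal{C}})}\f(\x)$. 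Conversely, the blockwise inequalities give $\f(\bar{\x})\leqq\f(\x)$, and the existence of a block $j$ with $\f_j(\bar{\x})\leqslant\f_j(\x)$ forces $\f(\bar{\x})\neq\f(\x)$, so together $\f(\bar{\x})\leqslant\f(\x)$. Negating over $\bar{\x}\in X$ then identifies $(\bm{\mathcal{S}},\bm{\mathcal{S}},\bm{\mathcal{C}})$-superiority with AiO-efficiency. I expect this equivalence, which pins the classical condition $\f(\bar{\x})\neq\f(\x)$ to the existence of a single strictly $\leqslant$-dominated block, to be the one step that needs genuine care, even though it is short.

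Finally, for part~(b) I would prove only the implication $\f(\bar{\x})<\f(\x)\Rightarrow\f(\bar{\x})\prec_{(\bm{\mathcal{S}},\bm{\mathcal{S}},\bm{\mathcal{C}})}\f(\x)$, since a strict decrease in every coordinate certainly makes each block satisfy $\f_i(\bar{\x})\leqslant\f_i(\x)$. Contraposing, $(\bm{\mathcal{S}},\bm{\mathcal{S}},\bm{\mathcal{C}})$-weak superiority, i.e.\ the absence of a $\prec_{(\bm{\mathcal{S}},\bm{\mathcal{S}},\bm{\mathcal{C}})}$-dominator in $X$, rules out any $<$-dominator and thus implies weak efficiency. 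I would also point out why the converse fails, explaining the one-directional form of~(b): the relation $\prec_{(\bm{\mathcal{S}},\bm{\mathcal{S}},\bm{\mathcal{C}})}$ only asks each block to be $\leqslant$-dominated and tolerates equality in individual coordinates, so it is strictly weaker than the all-coordinates-strict relation $<$; a solution that block-dominates $\x$ in this sense without improving strictly in every coordinate is then weakly efficient but not weakly superior.
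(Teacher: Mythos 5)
Your proposal is correct and follows essentially the same route as the paper: you identify the relations $\preceq_{(\bm{\mathcal{S}},\bm{\mathcal{S}},\bm{\mathcal{C}})}$, $\prec_{(\bm{\mathcal{S}},\bm{\mathcal{S}},\bm{\mathcal{C}})}$, and $\precqq_{(\bm{\mathcal{S}},\bm{\mathcal{S}},\bm{\mathcal{C}})}$ with the classical relations $\leqslant$, $<$ (one-directionally), and $\leqq$ on the concatenated outcome vector over the common feasible set $X_{\bm{\mathcal{S}},\bm{\mathcal{C}}}=X$, and then negate, exactly as in the paper's proof. Your spelled-out equivalence in part (a) and your closing remark on why (b) is only an implication (the paper illustrates this with its Example~\ref{ex:lemma:superiority_versus_efficieny}) are just more explicit versions of the same argument.
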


\begin{proof}
	\begin{enumerate}[label=(\alph*)]
		\item A vector $\x\in X$ is  $(\bm{\mathcal{S}},\bm{\mathcal{S}},\bm{\mathcal{C}})$-superior for \eqref{eq:PG} if and only if
		there is no $\bar{\x}\in X$ such that $\f_i(\bar{\x})\leqq \f_i(\x)$ for all $i\in\bm{\mathcal{S}}$ and there exists \emph{at least one} subsystem $j\in\bm{\mathcal{S}}$ such that $\f_j(\bar{\x})\leqslant \f_j(\x)$. Equivalently, there is no $\bar{\x}\in X$ such that $\f(\bar{\x})\leqslant \f(\x)$, i.e., $\x$ is efficient for \eqref{eq:P}.
		\item A vector $\x\in X$ is  $(\bm{\mathcal{S}},\bm{\mathcal{S}},\bm{\mathcal{C}})$-weakly superior for \eqref{eq:PG} if and only if
		there is no $\bar{\x}\in X$ for which \emph{in all} subsystems $i\in\bm{\mathcal{S}}$ it holds that $\f_i(\bar{\x})\leqslant \f_i(\x)$. This implies that there is no $\hat{\x}\in X$ for which \emph{in all} subsystems $i\in\bm{\mathcal{S}}$ it holds that $\f_i(\hat{\x})< \f_i(\x)$, i.e., for which $\f(\hat{\x})<\f(\x)$, and thus $\x$ is weakly efficient for \eqref{eq:P}.

		\item The claim follows directly from the definition. \qedhere
	\end{enumerate}
\end{proof}

The converse of Proposition \ref{lemma:superiority_versus_efficieny}(b) does not hold in general.
\begin{example}\label{ex:lemma:superiority_versus_efficieny} Consider a complex system \eqref{eq:P} with only one subsystem for which the weakly efficient set differs from the efficient set, e.g., $X=X_1=\{\x\in\mathbb{R}^2 : 0\leq x_i\leq 1,\, i=1,2\}$ and $\f(\x)=\f_1(\x)=(x_1,x_2)$, and thus $\f(X)=[0,1]\times[0,1]$, $\wSup(1,1,\emptyset)=E(P)=\{\0\}$ and 
	$wE(P)=(\{0\}\times[0,1])\cup([0,1]\times\{0\})$. See Figure \ref{fig:lemma:superiority_versus_efficieny} for an illustration.
\end{example}
\begin{figure}
	\centering
\begin{tikzpicture}[scale=0.75]
\draw [->] (3,0) -- (3.5,0);
\draw [->] (0,3) -- (0,3.5);
\draw (0,0) -- (0,-0.08);
\draw (1.5,0) -- (1.5,-0.08);
\node [below] at (1.5,-0.1) {$0.5$};
\draw (3,0) -- (3,-0.08);
\node [below] at (3,-0.1) {$1$};
\node [below] at (3.5,-0.1) {$x_1$};
\draw (0,0) -- (-0.08,0);
\draw (0,1.5) -- (-0.08,1.5);
\node [left] at (0,1.5) {$0.5$};
\draw (0,3) -- (-0.08,3);
\node [left] at (0,3) {$1$};
\node [left] at (0,3.5) {$x_2$};
\path [fill=lightgray] (0,0) -- (3,0) -- (3,3) -- (0,3) -- (0,0);
\draw  (3,0) -- (3,3) -- (0,3);
\draw [fill=white] (0,0) circle (.5ex);
\draw [thick,dash pattern={on 7pt off 2pt on 1pt off 3pt}]  (3,0) -- (0,0) -- (0,3);
\node at (2.5,2.5) {$X$};
\end{tikzpicture}
	\caption{\label{fig:lemma:superiority_versus_efficieny} Visualization of Example~\ref{ex:lemma:superiority_versus_efficieny}. $wE(P)$ is marked with a dashed pattern and $\wSup(1,1,\emptyset)$ is marked with a white circle.}
\end{figure}
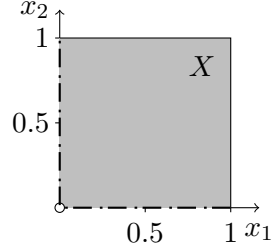

In the presence of all subsystems and a proper subset of their objective functions, superiority for \eqref{eq:PG} implies efficiency for \eqref{eq:P}. 

\begin{proposition}\label{lemma:subsystem_efficieny}
	Let $\x\in X$, i.e., \(\x\in\mathbb{R}^{|\bm{\mathcal{V}|}}\) is system valid, and consider the objective functions of a subset of the subsystems $\emptyset \neq F\subset\bm{\mathcal{S}}$.  Then the following holds: If $\x$ is 
	\begin{enumerate}[label=(\alph*)]
		\item $(F,\bm{\mathcal{S}},\bm{\mathcal{C}})$-weakly superior for \eqref{eq:PG}, then  $\x$ is weakly efficient for \eqref{eq:P}. 
		\item $(F,\bm{\mathcal{S}},\bm{\mathcal{C}})$-strictly superior for \eqref{eq:PG}, then $\x$ is strictly efficient for~\eqref{eq:P}. 
	\end{enumerate}
\end{proposition}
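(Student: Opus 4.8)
The plan is to obtain both implications almost for free by chaining two results already established in the paper: the monotonicity of the (weakly and strictly) superior sets in the objective-function index set (Proposition~\ref{prop:FsupToSsup}) and the equivalence of superiority and efficiency in the full-objective case $F=\bm{\mathcal{S}}$ (Proposition~\ref{lemma:superiority_versus_efficieny}). The proper-subset hypothesis $\emptyset\neq F\subset\bm{\mathcal{S}}$ is precisely what is needed to invoke the former with $F_1=F$ and $F_2=\bm{\mathcal{S}}$.

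For part~(a) I would start from the assumption that $\x\in X$ is $(F,\bm{\mathcal{S}},\bm{\mathcal{C}})$-weakly superior. Applying Proposition~\ref{prop:FsupToSsup}(a) with $F_1=F\subset\bm{\mathcal{S}}=F_2$ lifts this to $(\bm{\mathcal{S}},\bm{\mathcal{S}},\bm{\mathcal{C}})$-weak superiority, and then Proposition~\ref{lemma:superiority_versus_efficieny}(b) immediately yields that $\x$ is weakly efficient for \eqref{eq:P}. Part~(b) is entirely analogous: starting from $(F,\bm{\mathcal{S}},\bm{\mathcal{C}})$-strict superiority, Proposition~\ref{prop:FsupToSsup}(b) raises it to $(\bm{\mathcal{S}},\bm{\mathcal{S}},\bm{\mathcal{C}})$-strict superiority, and Proposition~\ref{lemma:superiority_versus_efficieny}(c) converts this into strict efficiency for \eqref{eq:P}.

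Should one prefer a self-contained argument, the same conclusions follow directly by contraposition, simply unfolding the definitions. If $\x$ were not weakly efficient, some $\bar{\x}\in X$ would satisfy $\f(\bar{\x})<\f(\x)$ componentwise; restricting to each subsystem block gives $\f_i(\bar{\x})<\f_i(\x)$ and hence $\f_i(\bar{\x})\leqslant\f_i(\x)$ for every $i\in\bm{\mathcal{S}}$, in particular for all $i\in F$, so $\f(\bar{\x})\prec_{(F,\bm{\mathcal{S}},\bm{\mathcal{C}})}\f(\x)$, contradicting weak superiority. The strict case is identical with $\f(\bar{\x})\leqq\f(\x)$ and $\bar{\x}\neq\x$, using $\precqq_{(F,\bm{\mathcal{S}},\bm{\mathcal{C}})}$ in place of $\prec_{(F,\bm{\mathcal{S}},\bm{\mathcal{C}})}$.

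I do not anticipate any genuine obstacle here, since the statement is a corollary of the two cited propositions and the block structure of $\f$ makes the componentwise relations transparent. The only point worth flagging is that, mirroring Proposition~\ref{lemma:superiority_versus_efficieny}(b) together with Example~\ref{ex:lemma:superiority_versus_efficieny}, part~(a) can only be an implication and not an equivalence, because already for $F=\bm{\mathcal{S}}$ weak efficiency fails to recover weak superiority.
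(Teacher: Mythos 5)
Your proposal is correct, and in fact you give two valid routes. Your self-contained contraposition argument is essentially the paper's own proof: for (a) the paper assumes $\x$ is not weakly efficient, obtains $\bar{\x}\in X$ with $\f(\bar{\x})<\f(\x)$, restricts the componentwise strict inequality to the blocks indexed by $F$, and contradicts weak superiority; part (b) is dismissed as following directly from the definitions, exactly as you note. Your primary route — deducing the statement as a corollary by chaining Proposition~\ref{prop:FsupToSsup} (monotonicity of $\wSup$ and $\sSup$ in the objective index set, applied with $F_1=F\subset F_2=\bm{\mathcal{S}}$) with Proposition~\ref{lemma:superiority_versus_efficieny}(b) and (c) — is a genuinely different and legitimate presentation, since both cited results are established earlier and the composition $\wSup(F,\bm{\mathcal{S}},\bm{\mathcal{C}})\subseteq\wSup(\bm{\mathcal{S}},\bm{\mathcal{S}},\bm{\mathcal{C}})\subseteq wE(P)$ (and likewise for $\sSup$ and $sE(P)$) gives the claim with no new computation; what it buys is brevity and a cleaner view of the logical dependencies, at the cost of hiding the elementary block-restriction step that the paper's direct proof makes explicit. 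Your closing remark that (a) cannot be strengthened to an equivalence, by Example~\ref{ex:lemma:superiority_versus_efficieny}, is also consistent with the paper's own observation that the converses of Proposition~\ref{lemma:subsystem_efficieny} fail in general.
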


\begin{proof}
	\begin{enumerate}[label=(\alph*)]
		
		\item Let $\x$ be $(F,\bm{\mathcal{S}},\bm{\mathcal{C}})$-weakly superior for \eqref{eq:PG}. Then, there does not exist  \(\bar{\x}\in X_{\bm{\mathcal{S}},\bm{\mathcal{C}}}\) such that  \( \f(\bar{\x})\preceq_{(F,\bm{\mathcal{S}},\bm{\mathcal{C}})}  \f(\x)\). Suppose \(\x\) is not weakly efficient for \eqref{eq:P}. Then, there exists  \(\bar{\x} \in X_{\bm{\mathcal{S}},\bm{\mathcal{C}}}\) with \( \f(\bar{\x}) < \f(\x)\). Equivalently, there exists \(\bar{\x} \in X_{\bm{\mathcal{S}},\bm{\mathcal{C}}}\) such that \( \f_i(\bar{\x}) < \f_i(\x) \) for all  \(i \in \{1,\ldots,|\bm{\mathcal{S}}|\}\). This implies that there exists \(\bar{\x} \in X_{\bm{\mathcal{S}},\bm{\mathcal{C}}}\) with \(\f_i(\bar{\x}) < \f_i(\x) \) for all  \(i \in F\) in contradiction to $\x$ being $(F,\bm{\mathcal{S}},\bm{\mathcal{C}})$-superior for \eqref{eq:PG}.
		
		\item The claim follows directly from the definition. \qedhere
	\end{enumerate}
\end{proof}

Note that the converse of the statements in Proposition \ref{lemma:subsystem_efficieny} do not hold in general. We obtain additional results for all subsystems and one objective function.

\begin{proposition}\label{prop:leadstosuperior}
	Let $\x\in X$ be $(i,\bm{\mathcal{S}},\bm{\mathcal{C}})$-superior for \eqref{eq:PG} for all \(i=1,\ldots,|\bm{\mathcal{S}}|\) (i.e., \(\x\in X\) is efficient w.r.t.\ the subsystem \eqref{eq:subsystemi} for all \(i=1,\ldots,|\bm{\mathcal{S}}|\) ). 
	Then, \(\x\) is efficient for \eqref{eq:P}.
\end{proposition}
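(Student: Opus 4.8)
The plan is to argue by contradiction and to exploit the block structure of the AiO objective $\f=(\f_1,\ldots,\f_{|\bm{\mathcal{S}}|})$. First I would recall that, since $F=\{i\}$ is a singleton, the relation $\preceq_{(i,\bm{\mathcal{S}},\bm{\mathcal{C}})}$ collapses to ordinary dominance on the single block $\f_i$ (as noted after Definition~\ref{def:systemdominance}). Hence the hypothesis says precisely that for every $i\in\bm{\mathcal{S}}$ there is no $\bar{\x}\in X_{\bm{\mathcal{S}},\bm{\mathcal{C}}}=X$ with $\f_i(\bar{\x})\leqslant\f_i(\x)$.

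Next I would assume, toward a contradiction, that $\x$ is not efficient for \eqref{eq:P}. By the definition of AiO efficiency there then exists $\bar{\x}\in X$ with $\f(\bar{\x})\leqslant\f(\x)$, that is, $\f(\bar{\x})\leqq\f(\x)$ together with $\f(\bar{\x})\neq\f(\x)$.

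The key step is to translate this componentwise inequality on the full vector into a statement about a single subsystem block. Since $\f$ is stacked from the blocks $\f_1,\ldots,\f_{|\bm{\mathcal{S}}|}$, the relation $\f(\bar{\x})\leqq\f(\x)$ yields $\f_i(\bar{\x})\leqq\f_i(\x)$ for every $i$, while $\f(\bar{\x})\neq\f(\x)$ forces at least one block index $j$ with $\f_j(\bar{\x})\neq\f_j(\x)$. For that $j$ we therefore obtain $\f_j(\bar{\x})\leqslant\f_j(\x)$, which is exactly $\f(\bar{\x})\preceq_{(j,\bm{\mathcal{S}},\bm{\mathcal{C}})}\f(\x)$. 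This contradicts the assumed $(j,\bm{\mathcal{S}},\bm{\mathcal{C}})$-superiority of $\x$, completing the argument.

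The computations here are entirely routine; the only point that requires care — and the one I would state explicitly — is the passage from $\f(\bar{\x})\neq\f(\x)$ on the concatenated vector to the existence of a single subsystem $j$ whose objective block strictly improves. This localizes the dominance on the whole vector to one coalition $F=\{j\}$ of exactly the form covered by the hypothesis, which is what makes the single-objective superiority assumption sufficient.
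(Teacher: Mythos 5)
Your proof is correct and follows essentially the same route as the paper's: a contradiction argument that takes a dominating $\bar{\x}\in X$, splits the componentwise inequality $\f(\bar{\x})\leqslant\f(\x)$ across the subsystem blocks, and locates a block $j$ with $\f_j(\bar{\x})\leqslant\f_j(\x)$, contradicting $(j,\bm{\mathcal{S}},\bm{\mathcal{C}})$-superiority. Your explicit remark on passing from $\f(\bar{\x})\neq\f(\x)$ to a strictly improving block is exactly the step the paper also relies on, just stated more carefully.
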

\begin{proof}
	Since \(\x\in X\), \(\x\) is feasible for \eqref{eq:P}. We assume that there exists some \(\bar{\x}\in X: \f(\bar{\x})\leqslant \f(\x)\). Then \(\f_i(\bar{\x}
	) \leqq \f_i(\x
	)\) for all \(i=1,\ldots,|\bm{\mathcal{S}}|\) and \(\f_i(\bar{\x}
	)\leqslant \f_i(\x
	)\) for some \(i\in\{1,\ldots,|\bm{\mathcal{S}}|\}\). 
	Thus, $\x$ is not $(i,\bm{\mathcal{S}},\bm{\mathcal{C}})$-superior, a contradiction. 
	Note that \(\bar{\x}\in X\) implies \(\bar{\x}_{s_i}\in X_i\), so \(\x_{s_i}\) is also not  efficient w.r.t.\ the subsystem \eqref{eq:subsystemi} in this case.\qedhere 
\end{proof}

Note that since Proposition \ref{prop:leadstosuperior} makes the stronger assumption that the solution $\x\in X$ is subsystem superior for \emph{all} subsystems, we obtain superiority for the AiO problem. See also Proposition \ref{prop:FsupToSsup} and Example \ref{ex:prop:FsupToSsup} for comparison.

\begin{proposition}\label{lemma:strictsuperiority}
	Let $\x\in X$ be $(i,\bm{\mathcal{S}},\bm{\mathcal{C}})$-strictly superior for \eqref{eq:PG} for at least one \(i\in\{1,\ldots,|\bm{\mathcal{S}}|\}\).   
	Then, \(\x\) is efficient for \eqref{eq:P}.
\end{proposition}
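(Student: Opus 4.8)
The plan is to argue by contradiction after first simplifying what $(i,\bm{\mathcal{S}},\bm{\mathcal{C}})$-strict superiority means when the index set $F=\{i\}$ is a singleton. By Definition~\ref{def:systemdominance}(c), weak system dominance $\precqq_{(i,\bm{\mathcal{S}},\bm{\mathcal{C}})}$ collapses to the single requirement $\f_i(\bar{\x})\leqq\f_i(\x)$, so by Definition~\ref{def:efficient}(c) the hypothesis states precisely that no $\bar{\x}\in X\setminus\{\x\}$ satisfies $\f_i(\bar{\x})\leqq\f_i(\x)$ (recall $X_{\bm{\mathcal{S}},\bm{\mathcal{C}}}=X$).

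Next I would note that feasibility is free: since $\x\in X$, it is feasible for~\eqref{eq:P}, so only the nondomination part of efficiency has to be verified. I would therefore assume, for contradiction, that $\x$ is not efficient, i.e., that there exists $\bar{\x}\in X$ with $\f(\bar{\x})\leqslant\f(\x)$. Splitting the AiO vector $\f=(\f_1,\ldots,\f_{|\bm{\mathcal{S}}|})$ into its subsystem blocks, the inequality $\f(\bar{\x})\leqq\f(\x)$ holds componentwise, and in particular $\f_i(\bar{\x})\leqq\f_i(\x)$ for the distinguished index $i$.

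The one step that needs genuine care --- and the only place where strictness is used --- is showing that the dominator $\bar{\x}$ is admissible as a witness, i.e., that $\bar{\x}\neq\x$. This follows because $\f(\bar{\x})\leqslant\f(\x)$ includes $\f(\bar{\x})\neq\f(\x)$, which rules out $\bar{\x}=\x$; hence $\bar{\x}\in X\setminus\{\x\}$. Combining this with $\f_i(\bar{\x})\leqq\f_i(\x)$ from the previous step produces exactly a forbidden weakly system dominating solution, contradicting the assumed $(i,\bm{\mathcal{S}},\bm{\mathcal{C}})$-strict superiority of $\x$. Therefore $\x$ must be efficient for~\eqref{eq:P}. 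I expect no real obstacle here; the subtlety worth flagging is that, unlike the weak or ordinary superiority arguments (cf.\ Proposition~\ref{prop:leadstosuperior}), strict superiority is powerful enough that a single subsystem suffices, precisely because the exclusion of $\x$ itself lets a merely weak (componentwise $\leqq$) improvement serve as the contradiction.
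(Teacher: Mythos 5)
Your proof is correct and follows essentially the same route as the paper's: assume a dominating $\bar{\x}\in X$ exists, observe that $\f(\bar{\x})\leqslant\f(\x)$ forces $\bar{\x}\neq\x$ and gives $\f_i(\bar{\x})\leqq\f_i(\x)$ for the distinguished subsystem, and conclude that this contradicts $(i,\bm{\mathcal{S}},\bm{\mathcal{C}})$-strict superiority. Your explicit remark that excluding $\x$ itself is exactly what lets a merely componentwise $\leqq$ improvement do the job is a nice clarification of why one subsystem suffices here but not in Proposition~\ref{prop:leadstosuperior}.
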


\begin{proof}
	Assume that \(\x\in X\) is not efficient for \eqref{eq:P}. Then there exists a solution $\bar{\x}\in X$ such that $\f(\bar{\x})\leqslant\f(\x)$. Hence, $(\f_1(\bar{\x}_{s_1}),\ldots,\f_{|\bm{\mathcal{S}}|}(\bar{\x}_{s_{|\bm{\mathcal{S}}|}}))\leqslant (\f_1(\x_{s_1}),\ldots,\f_{|\bm{\mathcal{S}}|}(\x_{s_{|\bm{\mathcal{S}}|}}))$ and $\bar{\x}\neq\x$, which implies that $\f_i(\bar{\x}_{s_i})\leqq \f_i(\x_{s_i})$ for all \(i\in\{1,\ldots,|\bm{\mathcal{S}}|\}\) and $\bar{\x}\neq \x$. 
	We can conclude that $\x$ is not $(i,\bm{\mathcal{S}},\bm{\mathcal{C}})$-strictly superior.\qedhere
\end{proof}


%
%
%
%
%
%
%
%

\subsection{Existence of Superior Solutions}

\noindent Due to the close relationship between system superiority and classical efficiency for the AiO problem~\eqref{eq:P} (see Proposition \ref{lemma:superiority_versus_efficieny} above), general existence results can be transferred from the corresponding results in the field of multiobjective optimization. For example, if the system valid set $X$ is compact and all objective functions are continuous, then the system superior set is nonempty.  Similarly, if $Y=\f(X)$ has a compact section $Y^0:=\{\y\in Y\, :\, \y\leqq \y^0\}$ for some $\y^0\in Y$, then again the system superior set is nonempty. See, for example, \cite{Ehrgott} for a survey of these and other existence results in multiobjective optimization.

In the following, we will discuss conditions under which the existence of superior solutions on the subsystem level implies the existence of superior solutions on the system level.

\begin{proposition}\label{prop:existsonesubsystem}
	Suppose that there is at least one subsystem $s_i\in\bm{\mathcal{S}}$ with nonempty $(i,\bm{\mathcal{S}},\bm{\mathcal{C}})$-weakly superior set, i.e.,
	$\exists i\in \{1,\ldots,|\bm{\mathcal{S}}|\}\,:\, \wSup(i,\bm{\mathcal{S}},\bm{\mathcal{C}}) \neq \emptyset.$ Then
	$\wSup(\bm{\mathcal{S}},\bm{\mathcal{S}},\bm{\mathcal{C}})\neq\emptyset$.
\end{proposition}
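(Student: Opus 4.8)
The plan is to obtain this as an immediate corollary of the objective-monotonicity of weakly superior sets already recorded in Proposition~\ref{prop:FsupToSsup}(a) and the remark following it. First I would fix an index $i$ witnessing the hypothesis, so that $\wSup(i,\bm{\mathcal{S}},\bm{\mathcal{C}})\neq\emptyset$, and select some $\x\in\wSup(i,\bm{\mathcal{S}},\bm{\mathcal{C}})$. Since $\{i\}$ is a nonempty subset of $\bm{\mathcal{S}}$, the remark after Proposition~\ref{prop:FsupToSsup} applies with $F=\{i\}$ and gives $\wSup(i,\bm{\mathcal{S}},\bm{\mathcal{C}})\subseteq\wSup(\bm{\mathcal{S}},\bm{\mathcal{S}},\bm{\mathcal{C}})$. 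Hence $\x\in\wSup(\bm{\mathcal{S}},\bm{\mathcal{S}},\bm{\mathcal{C}})$ and the latter set is nonempty, as claimed.

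If a self-contained argument is preferred over invoking the remark, I would instead unfold the definitions directly. By construction $\x\in\wSup(i,\bm{\mathcal{S}},\bm{\mathcal{C}})$ means that no $\bar{\x}\in X_{\bm{\mathcal{S}},\bm{\mathcal{C}}}$ satisfies $\f_i(\bar{\x})\leqslant\f_i(\x)$. A fortiori, no $\bar{\x}\in X_{\bm{\mathcal{S}},\bm{\mathcal{C}}}$ can satisfy $\f_j(\bar{\x})\leqslant\f_j(\x)$ simultaneously for all $j\in\bm{\mathcal{S}}$, because such a $\bar{\x}$ would in particular dominate $\x$ in subsystem $i$. This is precisely the statement that no $\bar{\x}$ satisfies $\f(\bar{\x})\prec_{(\bm{\mathcal{S}},\bm{\mathcal{S}},\bm{\mathcal{C}})}\f(\x)$, i.e.\ that $\x$ is $(\bm{\mathcal{S}},\bm{\mathcal{S}},\bm{\mathcal{C}})$-weakly superior.

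I expect no genuine difficulty here, as the statement is essentially a repackaging of the objective-monotonicity inclusion. The only point deserving a moment's care is the degenerate case $\bm{\mathcal{S}}=\{i\}$: there Proposition~\ref{prop:FsupToSsup}(a) does not literally apply, since it assumes a proper inclusion $F_1\subset F_2$, but in that case the two weakly superior sets coincide and the conclusion is immediate. The remark after Proposition~\ref{prop:FsupToSsup}, stated for all nonempty $F\subseteq\bm{\mathcal{S}}$, already subsumes this boundary case, so the cleanest write-up simply cites that remark.
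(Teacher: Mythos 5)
Your proof is correct, and both of your routes (citing the remark after Proposition~\ref{prop:FsupToSsup}, or unfolding the definition of weak system dominance directly) come down to the inclusion $\wSup(i,\bm{\mathcal{S}},\bm{\mathcal{C}})\subseteq\wSup(\bm{\mathcal{S}},\bm{\mathcal{S}},\bm{\mathcal{C}})$, from which the claim is immediate. The paper also disposes of the statement in one line, but it cites Proposition~\ref{lemma:subsystem_efficieny} instead, i.e.\ the implication from $(F,\bm{\mathcal{S}},\bm{\mathcal{C}})$-weak superiority to weak efficiency for~\eqref{eq:P}. That proposition on its own only yields $wE(P)\neq\emptyset$, and weak AiO efficiency does not in general imply $(\bm{\mathcal{S}},\bm{\mathcal{S}},\bm{\mathcal{C}})$-weak superiority (the paper's own Example~\ref{ex:lemma:superiority_versus_efficieny} exhibits weakly efficient points that are not weakly superior), so an extra step is needed to land in $\wSup(\bm{\mathcal{S}},\bm{\mathcal{S}},\bm{\mathcal{C}})$. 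Your appeal to the objective-monotonicity inclusion of Proposition~\ref{prop:FsupToSsup}(a) is the more direct and fully rigorous reference, and your aside about the degenerate case $\bm{\mathcal{S}}=\{i\}$ (where the hypothesis $F_1\subset F_2$ is not a proper inclusion and the two weakly superior sets simply coincide) is a sensible precaution. In short: same one-line-corollary strategy as the paper, but anchored to the earlier result that actually delivers the needed inclusion.
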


\begin{proof}
	Follows from Proposition \ref{lemma:subsystem_efficieny}.
\end{proof}

Conversely, even if for all subsystems $s_i\in\bm{\mathcal{S}}$ the 
$(i,\bm{\mathcal{S}},\bm{\mathcal{C}})$-superior set is nonempty, i.e.,
$\forall i\in \{1,\ldots,|\bm{\mathcal{S}}|\}\,:\, \Sup(i,\bm{\mathcal{S}},\bm{\mathcal{C}}) \neq \emptyset,$
this does in general \emph{not} imply that
$\Sup(\bm{\mathcal{S}},\bm{\mathcal{S}},\bm{\mathcal{C}})\neq\emptyset$.
We provide a simple example below.

\begin{figure}
	\centering
\begin{tikzpicture}[scale=1.5]\small
\path [fill=lightgray] (0,0.5) -- (0,2) -- (2,2) -- (2,0) -- (0.5,0);
\draw [<->] (0,2.2) -- (0,0) -- (2.5,0);
\draw (0,0) -- (-0.08,0);
\draw (0,1) -- (-0.08,1);
\draw (0,2) -- (-0.08,2);
\node [left] at (0,2.2) {$f_2(x_1,x_2)$};
\node [left] at (-0.05,2) {2};
\node [left] at (-0.05,1) {1};
\draw (0,0) -- (0,-0.08);
\draw (1,0) -- (1,-0.08);
\draw (2,0) -- (2,-0.08);
\node [below right] at (2.4,0) {$f_1(x_1,x_2)$};
\node [] at (1.25,1.5) {$\f(X_{\{1,2\},\emptyset})$};
\draw[dashed] (0,0.5) -- (0.5,0);
\draw (0,2) -- (2,2);
\draw (2,0) -- (2,2);
\node[circle, draw=black, inner sep=2pt] (0) at (0.5,0){};
\node[circle, draw=black, inner sep=2pt] (0) at (0,0.5){};
\draw[ultra thick] (0.5,0) -- (2,0);
\draw[ultra thick] (0,0.5) -- (0,2);
\node[left] at (0,1.33){$\wSup(1,\{1,2\},\emptyset)$};
\node[below] at (1.33,0){$\wSup(2,\{1,2\},\emptyset)$};
\end{tikzpicture}
	
	\caption{Illustration of the image $\f(X)$ of the AiO problem of Example \ref{example:nonexistence} with the subsystem superior sets in the objective space. \label{fig:examplefornonexistence} }
\end{figure}
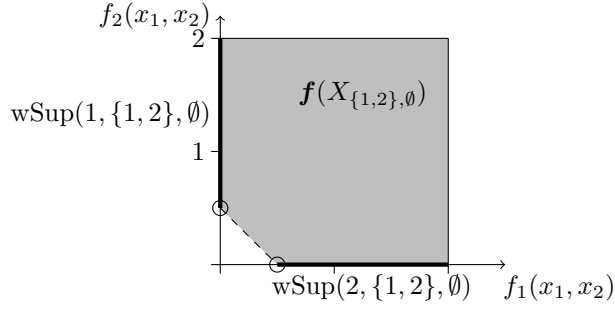

\begin{example}\label{example:nonexistence}
	The following AiO problem has two variables $x_1$ and $x_2$ (which are  both global variables in the considered decomposition) and consists of two subsystems $s_1$ and $s_2$ that are both single-objective and share the same feasible set:
	\begin{equation*}
	(P_1)\,\begin{aligned}
	\min \;& f_1(x_1,x_2) = x_1 \\
	\text{s.t.} \; &  0\leq x_1,x_2 \leq 2\\
	& x_1+x_2 > \tfrac{1}{2}
	\end{aligned}
	\hspace{2cm}
	(P_2)\,\begin{aligned}
	\min \;& f_2(x_1,x_2) = x_2 \\
	\text{s.t.} \; &  0\leq x_1,x_2 \leq 2\\
	& x_1+x_2 > \tfrac{1}{2}
	\end{aligned}
	\end{equation*}
	In this case, $\Sup(1,\{1,2\},\emptyset)=\{(0,t)\in\R^2\,:\, \frac{1}{2}<t\leq 2\}$ and
	$\Sup(2,\{1,2\},\emptyset)=\{(t,0)\in\R^2\,:\, \frac{1}{2}<t\leq 2\}$, but
	$\Sup(\{1,2\},\{1,2\},\emptyset)=\emptyset$. See Figure \ref{fig:examplefornonexistence} for an illustration.
\end{example}

\subsection{Illustrative Example}

\noindent Example~\ref{example:illustrativeexample} is used to illustrate some properties of complex systems.
\begin{example}\label{example:illustrativeexample} 
	We consider an AiO problem with four variables $x_1,x'_1,x_2,x_3\in\R$ decomposed into two subsystems $s_1$ and $s_2$ as follows:
	\begin{equation}\notag
	(P_1)\,\begin{aligned}
	\vmin \;& \f_1(x_1,x_2) = (-x_1-x_2,\, 2x_1+x_2) \\
	\text{s.t.} \; & x_1+x_2 \geq \tfrac{1}{2}\\
	& 0\leq x_1 \leq 2\\
	& 0\leq x_2 \leq 2
	\end{aligned}
	\hspace{1cm}
	(P_2)\,\begin{aligned}
	\vmin \;& \f_2(x'_1,x_3) = (-x'_1,\, -x'_1-x_3) \\
	\text{s.t.} \; &  x'_1+x_3 \leq \tfrac{7}{2}\\
	& 0\leq x'_1\leq 3\\
	& 0\leq x_3\leq 2
	\end{aligned}
	\end{equation}
	The two subsystems are linked by
	one linking constraint given by $$c_1(x_1,\, x'_1)=x_1-x'_1=0.$$
	Thus, $\bm{\mathcal{S}}=\{\sigma_1,\sigma_2\}$ and $\bm{\mathcal{C}}=\{\kappa_1\}$. The complex system graph of this example problem is illustrated in Figure \ref{fig:illustrative_example}.
\end{example}

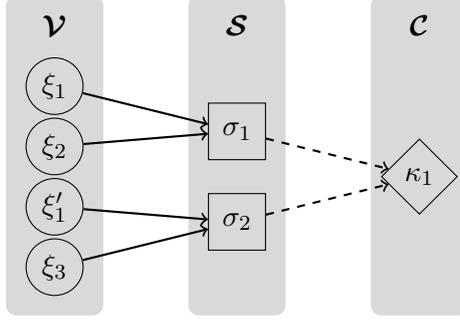
\begin{figure}
	\centering
\begin{tikzpicture}[scale=0.8]
\node[] (Variables) at (0,11){$\bm{\mathcal{V}}$};
\node[] (Subsystems) at (3,11){$\bm{\mathcal{S}}$};
\node[] (Linking) at (6,11){$\bm{\mathcal{C}}$};
%
\node [draw, circle,minimum size=0.75cm,inner sep=2pt] (xi3) at (0,7) {$ \xi_3 $};
\node [draw, circle,minimum size=0.75cm,inner sep=2pt] (xi1prime) at (0,8) {$ \xi'_1 $};
\node [draw, circle,minimum size=0.75cm,inner sep=2pt] (xi2) at (0,9) {$ \xi_2 $};
\node [draw, circle,minimum size=0.75cm,inner sep=2pt] (xi1) at (0,10) {$ \xi_1 $};
\node[draw, rectangle,minimum size=0.75cm] (S2) at (3,7.75) {$ \sigma_2 $};
\node[draw, rectangle,minimum size=0.75cm] (S1) at (3,9.25) {$ \sigma_1 $};
\node[] (Slast) at (3,6.7) {};
\node[draw, diamond,minimum size=1cm,inner sep=1pt] (Link1) at (6,8.5) {$ \kappa_1 $};
\node[] (Link3) at (6,6.7) {};

\draw[->,thick] (xi1)--(S1);
\draw [->,thick] (xi1prime)--(S2);
\draw [->,thick] (xi2)--(S1);
\draw [->,thick] (xi3)--(S2);
\draw [->,dashed,thick] (S1)--(Link1);
\draw [->,dashed,thick] (S2)--(Link1);
\begin{scope}[on background layer]
\fill [fill=black!15,rounded corners] (-0.8,6.2) rectangle (0.8,11.5);
\fill [fill=black!15,rounded corners] (2.2,6.2) rectangle (3.8,11.5);
\fill [fill=black!15,rounded corners] (5.2,6.2) rectangle (6.8,11.5);
\end{scope}
\end{tikzpicture}
	\caption{\label{fig:illustrative_example} Complex system graph \(G\) for Example \ref{example:illustrativeexample}.}
\end{figure}

Example~\ref{example:illustrativeexample} describes a four dimensional problem that can be illustrated in three dimensions since $x_1=x'_1$ for all system valid solutions. Note that the sets of $(1,\bm{\mathcal{C}})$-valid and $(1,\emptyset)$-valid solutions are not the same in $\mathbb{R}^4$, but their projections onto the $(x_1,\, x_2)$-space are the same.

Figure~\ref{fig:illustrativeexample} illustrates $(S,C)$-valid sets for different combinations of $S\subseteq\bm{\mathcal{S}}$ and $C\subseteq\bm{\mathcal{C}}$ together with the corresponding superior sets. Note that considering an individual subproblem provides not much insight for the AiO problem at hand. In particular, Figure~\ref{fig:illustrativeexample} (a) and (b) do not reveal that $\Sup(1,\bm{\mathcal{S}},\bm{\mathcal{C}})$ and $\Sup(2,\bm{\mathcal{S}},\bm{\mathcal{C}})$ share one common point (as can be seen in part (c) of the figure). Even the knowledge of the entire sets $\Sup(1,\bm{\mathcal{S}},\bm{\mathcal{C}})$ and $\Sup(2,\bm{\mathcal{S}},\bm{\mathcal{C}})$ might in general not be enough to reconstruct the set $\Sup(\bm{\mathcal{S}},\bm{\mathcal{S}},\bm{\mathcal{C}})$ (as can be seen in part (d) of the figure).

\begin{figure}[htb]
  \centering
	\begin{subfigure}[]
		\centering
		\includegraphics[scale=0.12]{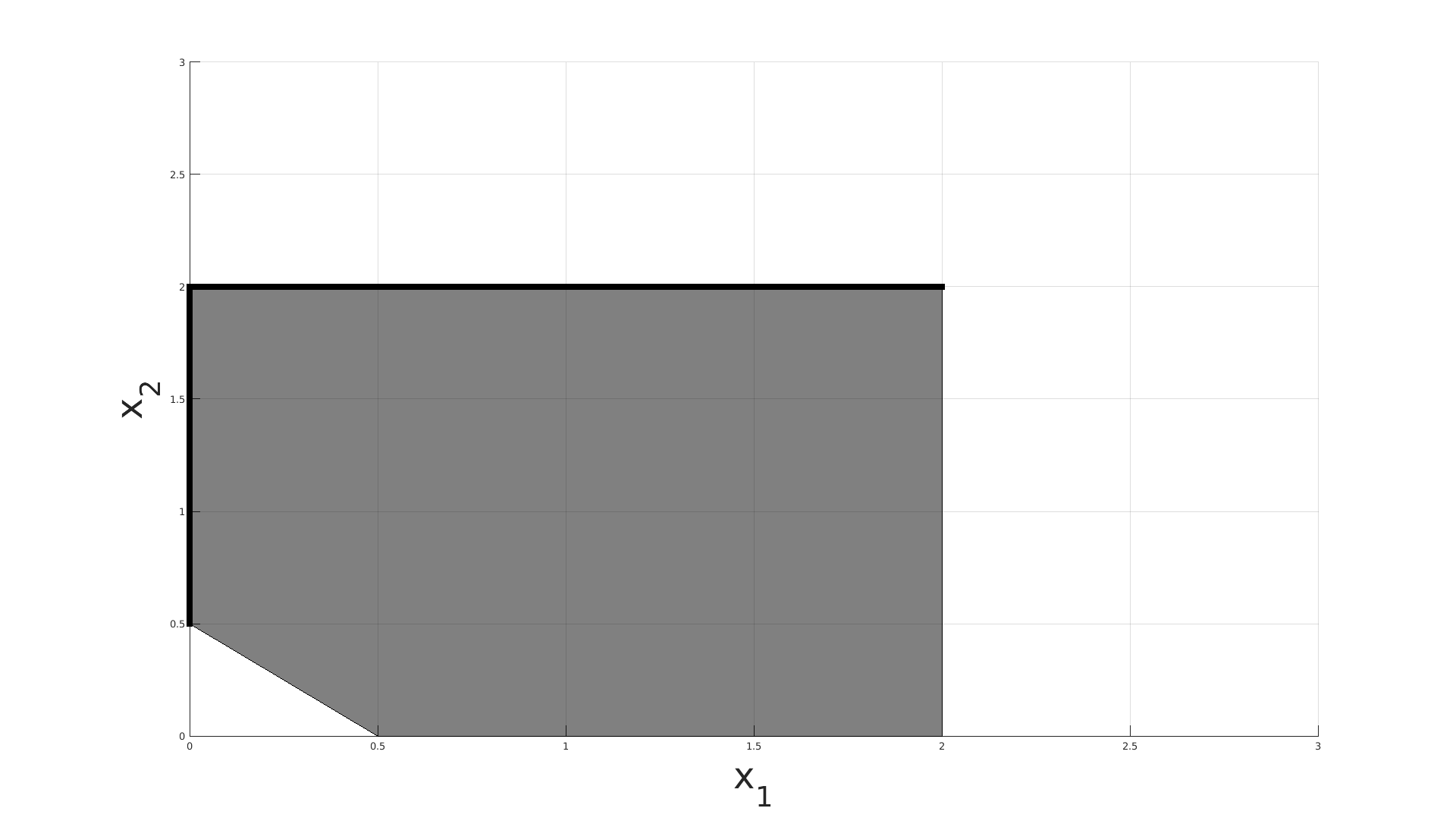} 
	\end{subfigure}
	\begin{subfigure}[]
		\centering
		\includegraphics[scale=0.12]{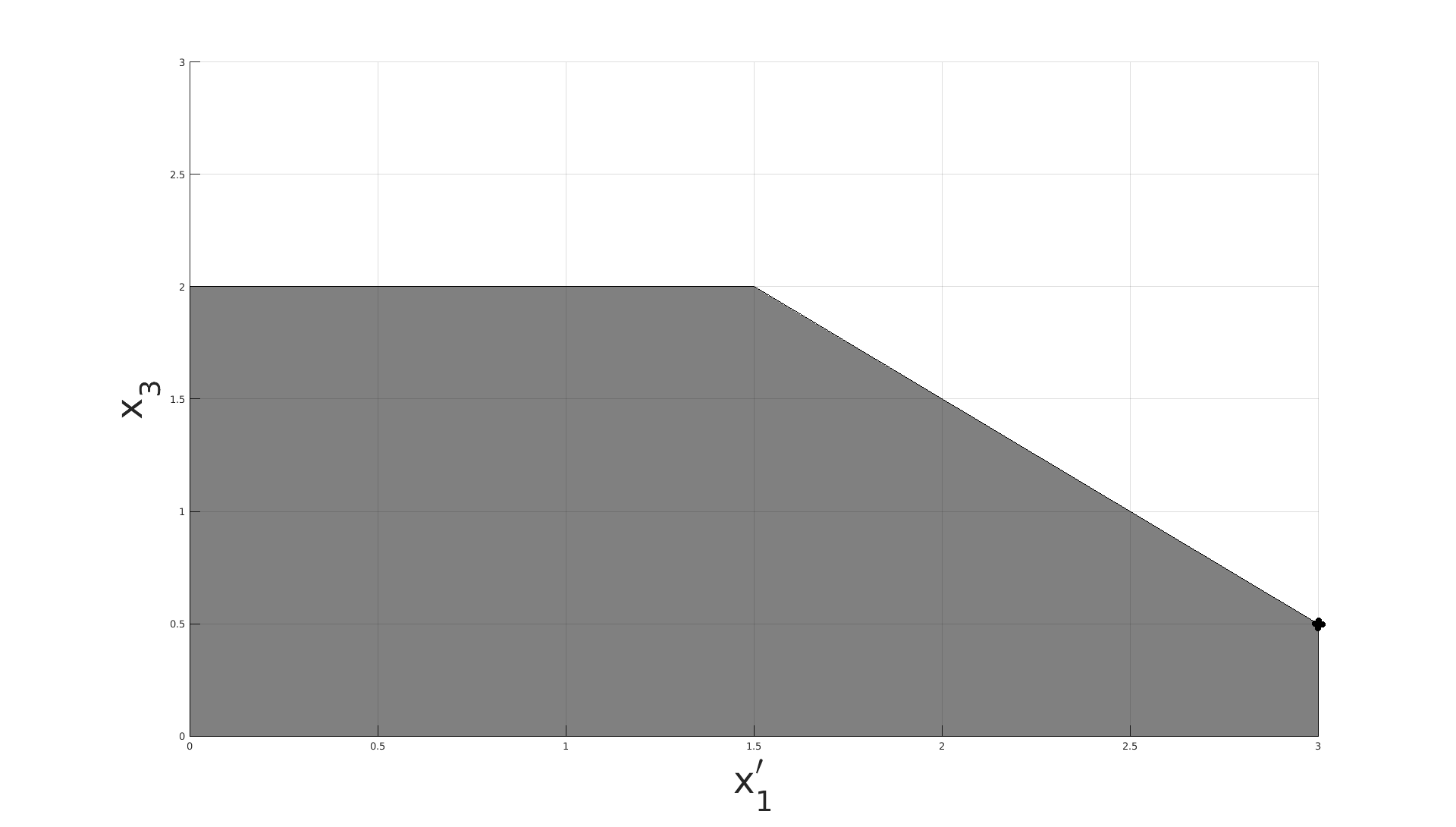}
	\end{subfigure}
	\begin{subfigure}[]
		\centering
		\includegraphics[scale=0.12]{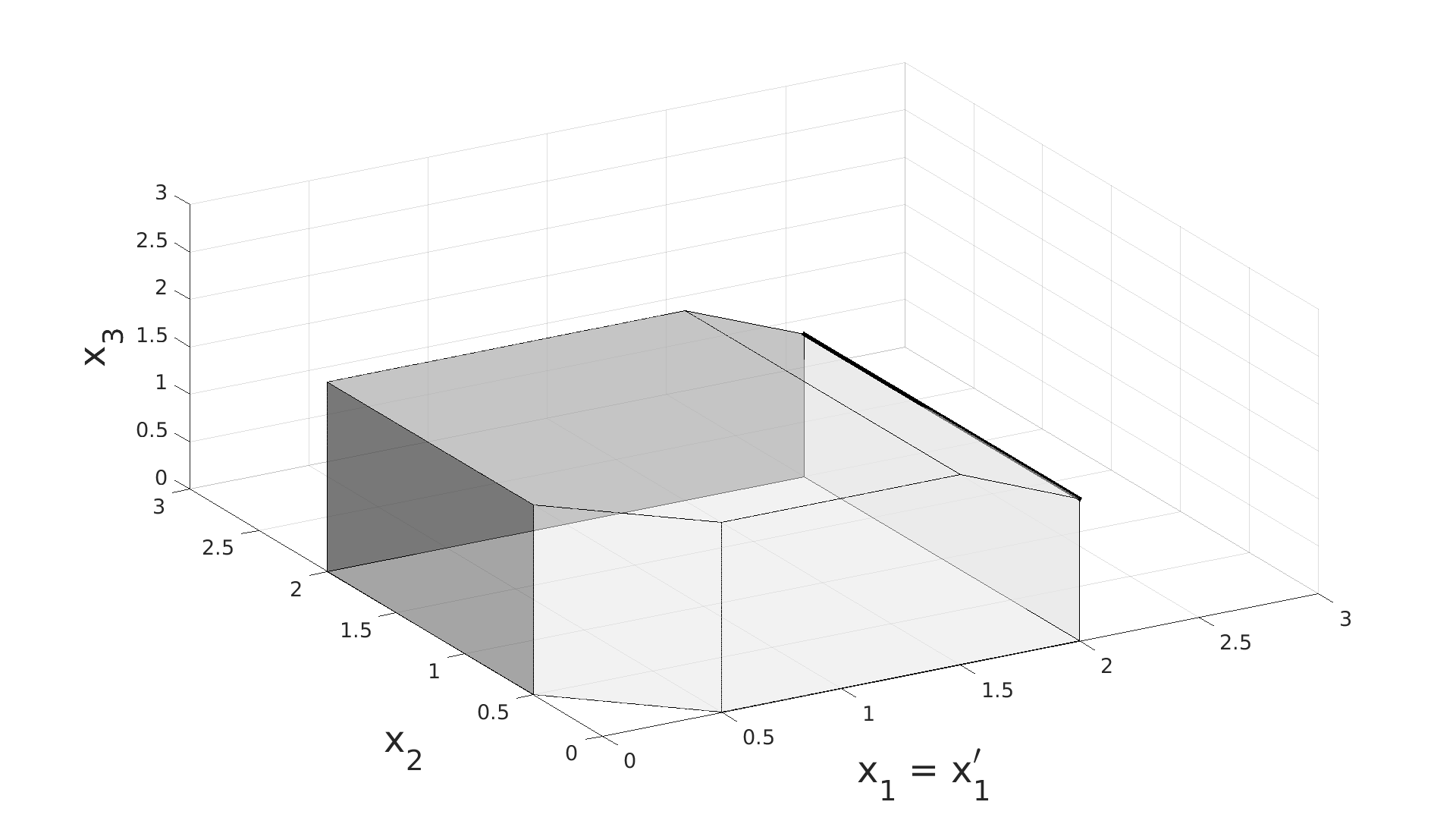}
	\end{subfigure}
	\begin{subfigure}[]
		\centering
		\includegraphics[scale=0.12]{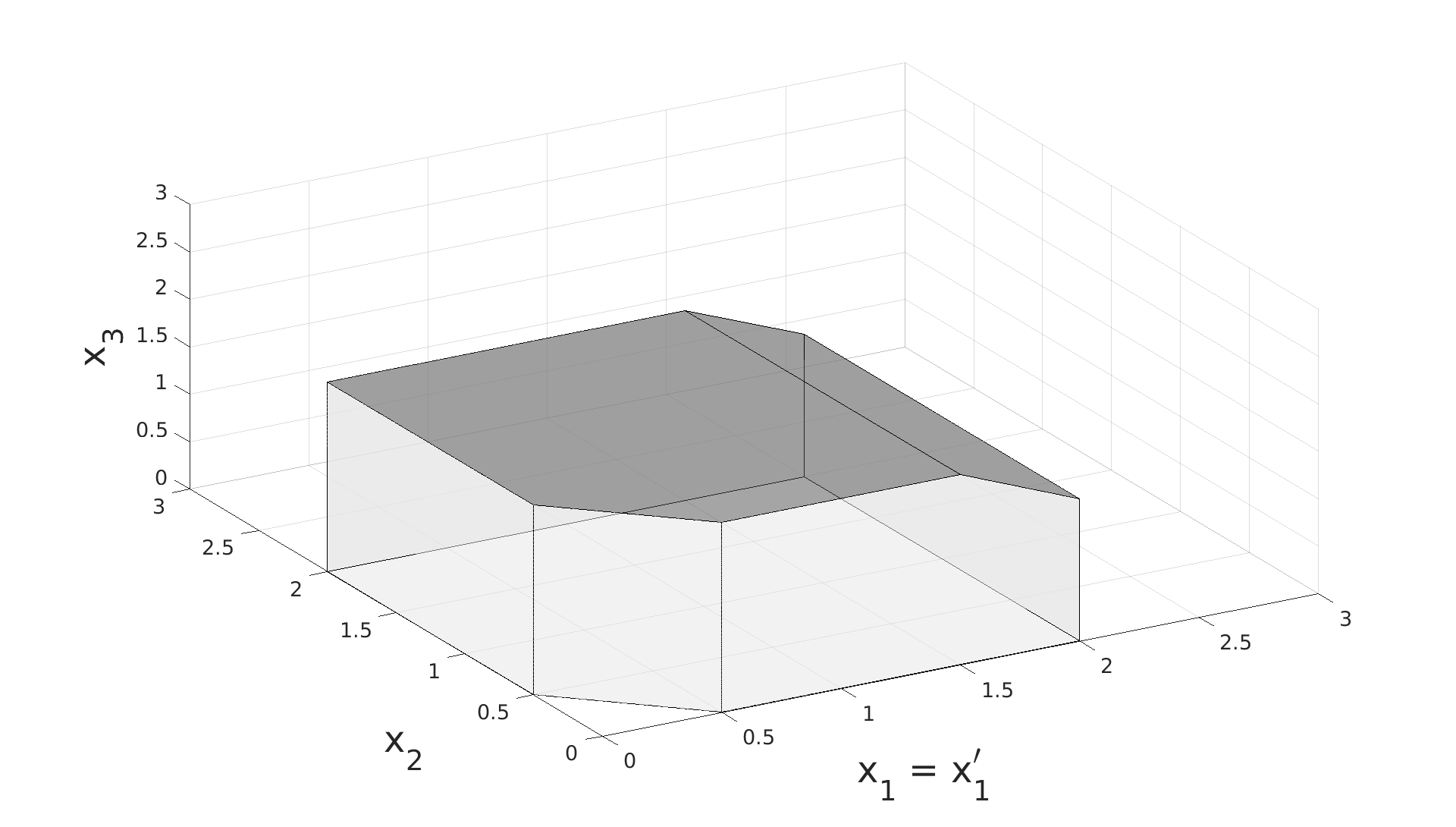}
	\end{subfigure}
	\caption{(a) Illustration of the $(1,\emptyset )$-valid set of Example~\ref{example:illustrativeexample}. The set $\Sup(1,1,\emptyset )$ is highlighted by bold black lines.
		(b) Illustration of the $(2,\emptyset )$-valid set of Example~\ref{example:illustrativeexample}. The set $\Sup(2,2,\emptyset)$ projects onto a single point that is highlighted by a black dot.
		(c) Illustration of the $(\bm{\mathcal{S}},\, \bm{\mathcal{C}})$-valid set of Example~\ref{example:illustrativeexample}. The set $\Sup(1,\bm{\mathcal{S}},\bm{\mathcal{C}})$ is highlighted in dark gray and the set $\Sup(2,\bm{\mathcal{S}},\bm{\mathcal{C}})$ is highlighted by a bold black line.
		(d) Illustration of the $(\bm{\mathcal{S}},\, \bm{\mathcal{C}})$-valid set of Example~\ref{example:illustrativeexample} with the set $\Sup(\bm{\mathcal{S}},\bm{\mathcal{S}},\bm{\mathcal{C}})$ highlighted in dark gray.\label{fig:illustrativeexample}}
\end{figure}

\section{Achieving Superiority}\label{sec:achsuperiority}

\subsection{Obtaining Lower Bounds}

\noindent The decomposition of an AiO problem \eqref{eq:P} based on a complex system graph $G$ gives rise to a generalization of the concept of ideal outcome vectors. We make use of the objective space images 
of the efficient sets to the individual  subsystems \eqref{eq:subsystemi} and call them \emph{subsystem level} ideal sets because they represent the best objective values each subsystem can achieve with no interaction with other subsystems. Similarly, the ideal point to every subsystem \eqref{eq:subsystemi} provides a lower bound on the performance of subsystem $s_i$.  The same concepts can be defined for every subsystem at the \emph{system level} when its interaction with other subsystems is considered. Obviously, the subsystem level ideal set 
is a lower bound for the  system level ideal set, 
which is illustrated in Figure \ref{fig:ideal}. The mutual location of these sets may provide a measure of the contribution of subsystem $s_i$ to the interaction with the other subsystems in the units of its  decayed performance.
A definition of these concepts is given below.


\begin{definition}[Lower bounds on superior objective values]
	Consider an AiO problem \eqref{eq:P} and its decomposition \eqref{eq:PG}.
	\begin{enumerate}[label=(\alph*)]
		\item An objective vector $\y\in\mathbb{R}^{p}$ is called \emph{subsystem level ideal} if all of its subvectors $\y_{s_i}$, $i\in\{1,\dots,|\bm{\mathcal{S}}|\}$ are images of $(i,i,\emptyset)$-superior solutions, i.e., of feasible and efficient solutions for subsystem $s_i$. The \emph{subsystem level ideal set} is  given by 
		$Y^{ssI} := \f_1(\Sup(1,1,\emptyset)) \times \cdots \times \f_{|\Sys|}(\Sup(|\bm{\mathcal{S}}|,|\bm{\mathcal{S}}|,\emptyset))$.
		\item Let $\y_i^{ssI}$ be the individual subsystem ideal point for all $i\in\{1,\dots,|\bm{\mathcal{S}}|\}$, i.e., $y_{i,k}^{ssI}= \min \{f_{ik} (\x_{s_i}):  \x_{s_i} \in X_i \}$, $k = 1 ,\ldots, p_i$, is the ideal point w.r.t.\ images of the $(i,i,\emptyset)$-superior solutions. Then $\y^{ssI}:=(\y_1^{ssI},\dots,\y_{|\Sys|}^{ssI})$ is called the \emph{subsystem level ideal point}.
		\item An objective vector
		$\y\in\mathbb{R}^{p}$ is called \emph{system level ideal} if all of its subvectors $\y_{s_i}$, $i\in\{1,\dots,|\bm{\mathcal{S}}|\}$ are images of $(i,\Sys,\bm{\mathcal{C}})$-superior solutions, i.e., system valid solutions that are efficient w.r.t.\ subsystem $s_i$.
		The \emph{system level ideal set} is given by $Y^{sI}:=\f_1(\Sup(1,\bm{\mathcal{S}},\bm{\mathcal{C}})) \times \cdots \times \f_{|\Sys|}(\Sup(|\bm{\mathcal{S}}|,\bm{\mathcal{S}},\bm{\mathcal{C}}))$.
		\item Let $\y_i^{sI}$ be the system ideal points for all $i\in\{1,\dots,|\bm{\mathcal{S}}|\}$, i.e., $\y_{i,k}^{sI}= \min \{f_{ik} (\x_{s_i}):$ $ \x_{s_i} \in X_{\bm{\mathcal{S}},\bm{\mathcal{C}}} \}$, $k = 1 ,\ldots, p_i$, is the ideal point w.r.t.\ images of the $(i,\bm{\mathcal{S}},\bm{\mathcal{C}})$-superior solutions. Then $\y^{sI}:=(\y_1^{sI},\dots,\y_{|\Sys|}^{sI})$ is called the \emph{system level ideal point}.
	\end{enumerate}
\end{definition}

Note that $\y^{ssI}$ is a worse lower bound than $\y^{sI}$ but, depending on the specific setup, may be easier to compute since only a single objective is taken into account.


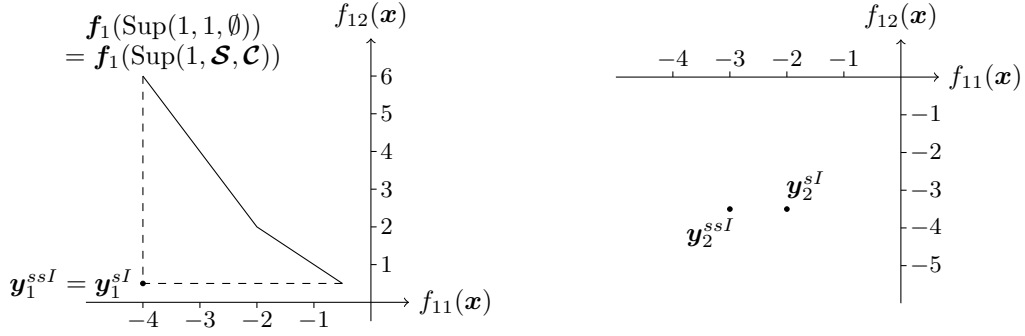
\begin{figure}
	\centering\small
\begin{tikzpicture}[scale=0.5]
\draw[->] (-0.5,0) -- (8,0);
\node [right] at (8,0) {\small$f_{11}(\x)$};
\draw (5.5,0) -- (5.5,-0.15);
\node [below] at (5.5,0) {\footnotesize$-1$};
\draw (4,0) -- (4,-0.15);
\node [below] at (4,0) {\footnotesize$-2$};
\draw (2.5,0) -- (2.5,-0.15);
\node [below] at (2.5,0) {\footnotesize$-3$};
\draw (1,0) -- (1,-0.15);
\node [below] at (1,0) {\footnotesize$-4$};
\draw[->] (7,-0.5) -- (7,7); 
\node [above] at (7,7) {\small$f_{12}(\x)$};
\draw (7,1) -- (7.15,1);
\node [right] at (7,1) {\footnotesize$1$};
\draw (7,2) -- (7.15,2);
\node [right] at (7,2) {\footnotesize$2$};
\draw (7,3) -- (7.15,3);
\node [right] at (7,3) {\footnotesize$3$};
\draw (7,4) -- (7.15,4);
\node [right] at (7,4) {\footnotesize$4$};
\draw (7,5) -- (7.15,5);
\node [right] at (7,5) {\footnotesize$5$};
\draw (7,6) -- (7.15,6);
\node [right] at (7,6) {\footnotesize$6$};
\draw (6.25,0.5) -- (4,2)  -- (1,6);
\node [above] at (1.8,6.7) {\small$\f_1(\Sup(1,1,\emptyset))$};
\node [above] at (1.8,5.9) {\small$=\f_1(\Sup(1,\Sys,\bm{\mathcal{C}}))$};
\draw [dashed] (6.25,0.5) -- (1,0.5) -- (1,6);
\draw [fill] (1,0.5) circle [radius=0.06];
\node [left] at (1,0.5) {\small$\y^{ssI}_1=\y^{sI}_1$};
\end{tikzpicture}\hspace*{1.5cm}\begin{tikzpicture}[scale=0.5]
\draw[->] (-0.5,6) -- (8,6);
\node [right] at (8,6) {\small$f_{11}(\x)$};
\draw (5.5,6) -- (5.5,6.15);
\node [above] at (5.5,6) {\footnotesize$-1$};
\draw (4,6) -- (4,6.15);
\node [above] at (4,6) {\footnotesize$-2$};
\draw (2.5,6) -- (2.5,6.15);
\node [above] at (2.5,6) {\footnotesize$-3$};
\draw (1,6) -- (1,6.15);
\node [above] at (1,6) {\footnotesize$-4$};
\draw[->] (7,0) -- (7,7); 
\draw[white] (7,-0.9) -- (7,0); 
\node [above] at (7,7) {\small$f_{12}(\x)$};
\draw (7,1) -- (7.15,1);
\node [right] at (7,1) {\footnotesize$-5$};
\draw (7,2) -- (7.15,2);
\node [right] at (7,2) {\footnotesize$-4$};
\draw (7,3) -- (7.15,3);
\node [right] at (7,3) {\footnotesize$-3$};
\draw (7,4) -- (7.15,4);
\node [right] at (7,4) {\footnotesize$-2$};
\draw (7,5) -- (7.15,5);
\node [right] at (7,5) {\footnotesize$-1$};
\node [below] at (2,2.5) {\small$\y^{ssI}_2$};
\draw [fill] (2.5,2.5) circle [radius=0.06];
\node [above] at (4.5,2.5) {\small$\y^{sI}_2$};
\draw [fill] (4,2.5) circle [radius=0.06];
\end{tikzpicture}\hfill
	\caption{The system level ideal points $\y^{sI}_i$ and the subsystem level ideal points $\y^{ssI}_i$ for both subsystems of example~\ref{example:illustrativeexample}. Note that $\f_2(\Sup(2,2,\emptyset))=\y^{ssI}_2$ and $\f_2(\Sup(2,\Sys,\bm{\mathcal{C}}))=\y^{sI}_2$ hold.\label{fig:ideal}}
\end{figure}

\begin{proposition}
	Consider an AiO problem \eqref{eq:P} and its reformulation \eqref{eq:PG}, and let $\y^I$ be the ideal point of the AiO problem \eqref{eq:P}. Then the following holds:
	\begin{enumerate}[label=(\alph*)]
		\item $Y^{sI}\subseteq Y^{ssI}+\R^p_{\geqq}$.
		\item $\y^{ssI}\leqq \y^I = \y^{sI}$ and $\f(X)\subseteq \y^{sI}+\R^p_{\geqq}$.
	\end{enumerate}
\end{proposition}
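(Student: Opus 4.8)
The plan is to dispatch part (b) first, since it is essentially order-theoretic and definitional, and then to reduce part (a) to a single external-stability argument applied subsystem by subsystem.

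For part (b), I would begin by observing that $\y^{sI}=\y^I$. The $(i,k)$-component of $\y^{sI}$ is $\min\{f_{ik}(\x_{s_i}):\x\in X_{\Sys,\bm{\mathcal{C}}}\}$; since $f_{ik}$ depends only on $\x_{s_i}$ and $X_{\Sys,\bm{\mathcal{C}}}=X$, this equals $\min\{f_{ik}(\x):\x\in X\}$, which is exactly the corresponding component of the AiO ideal point $\y^I$. Next, to get $\y^{ssI}\leqq\y^{sI}$, I would note that every system valid $\x\in X$ is in particular feasible for $s_i$, so the projection of $X$ onto the $\x_{s_i}$-coordinates is contained in $X_i$; minimizing $f_{ik}$ over the larger set $X_i$ can only decrease the optimal value, giving $\y^{ssI}_{i,k}\leq \y^I_{i,k}=\y^{sI}_{i,k}$ for every $i$ and $k$. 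Finally, $\f(X)\subseteq\y^I+\R^p_{\geqq}$ is immediate from the definition of the ideal point, since $f_{ik}(\x)\geq y^I_{ik}$ for all $\x\in X$; combined with $\y^I=\y^{sI}$ this is the desired inclusion.

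For part (a), I would argue blockwise, exploiting that both $Y^{sI}$ and $Y^{ssI}$ are Cartesian products over the subsystems. Take any $\y\in Y^{sI}$; by definition its subvector $\y_{s_i}$ lies in $\f_i(\Sup(i,\Sys,\bm{\mathcal{C}}))$, hence $\y_{s_i}=\f_i(\x)$ for some system valid $\x$, and in particular $\y_{s_i}\in\f_i(X_i)$ is an attainable outcome of the isolated subsystem $s_i$. The crucial step is external stability of the efficient set of $s_i$, namely $\f_i(X_i)\subseteq\f_i(E(P_i))+\R^{p_i}_{\geqq}$, where $E(P_i)=\Sup(i,i,\emptyset)$; applying it to $\y_{s_i}$ yields a point $\hat{\y}_{s_i}\in\f_i(\Sup(i,i,\emptyset))$ with $\hat{\y}_{s_i}\leqq\y_{s_i}$. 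Concatenating the $\hat{\y}_{s_i}$ over all $i$ produces $\hat{\y}\in Y^{ssI}$ with $\hat{\y}\leqq\y$, so that $\y\in Y^{ssI}+\R^p_{\geqq}$.

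The main obstacle is precisely this external-stability step, the only place where a nontrivial structural hypothesis enters: external stability can fail for arbitrary $X_i$ and $\f_i$, so I would invoke the regularity assumptions already used before the existence results (for instance, $X_i$ compact with $\f_i$ continuous, or $\R^{p_i}_{\geqq}$-compactness of a lower section of $\f_i(X_i)$), under which every attainable outcome is dominated by an efficient one. Part (b), in contrast, requires nothing beyond the existence of the minima defining the ideal points and is purely a matter of comparing optima over nested feasible sets.
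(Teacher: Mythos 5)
Your proof is correct and follows essentially the same route as the paper: part (a) is handled blockwise via the inclusion $\f_i(\Sup(i,\Sys,\bm{\mathcal{C}}))\subseteq\f_i(\Sup(i,i,\emptyset))+\R^{p_i}_{\geqq}$, and part (b) by comparing minima over the nested feasible sets together with the definition of the ideal point. Two small differences are worth noting. First, the paper deduces $\y^{ssI}\leqq\y^{sI}$ ``from (a),'' whereas you prove it directly from the nesting of the feasible sets; your version is cleaner, since (a) is a statement about the ideal \emph{sets} and transferring it to the ideal \emph{points} still needs the nested-minima observation. Second, and more substantively, you make explicit that the key step in (a) is an external-stability (domination) property of the subsystem efficient sets, which can fail without regularity assumptions (e.g.\ if $\Sup(i,i,\emptyset)=\emptyset$ while $\Sup(i,\Sys,\bm{\mathcal{C}})\neq\emptyset$, as can happen with non-closed $X_i$); the paper asserts the inclusion without comment, so your added hypothesis (compactness of $X_i$ and continuity of $\f_i$, or a compact section) is a genuine and welcome strengthening rather than a detour.
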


\begin{proof}
	\begin{enumerate}[label=(\alph*)]
		\item Let $\y=(\y_1,\dots,\y_{|\Sys|})\in Y^{sI}$. By definition, $\y_i\in \f_i(\Sup(i,\Sys,\bm{\mathcal{C}}))$  for all $i\in\{1,\dots,|\bm{\mathcal{S}}|\}$. Since $X=X_{\Sys, \bm{\mathcal{C}}}\subseteq X_{i,\emptyset}$, we have that $\f_i(\Sup(i,\Sys,\bm{\mathcal{C}}))\subseteq \f_i(\Sup(i,i,\emptyset))+\R^{|\pred{\sigma_i}|}_{\geqq}$ for all $i\in\{1,\dots,|\bm{\mathcal{S}}|\}$, and the result follows.\label{proof:ideal1}
		\item From \ref{proof:ideal1}, we immediately have that $\y^{ssI}\leqq\y^{sI}$. Moreover, since $\y_i^{sI}\leqq \f_i(\x)$ for all $\x\in X=X_{\Sys, \bm{\mathcal{C}}}$, we can conclude that $\y^{sI} \leqq \f(\x)$ for all $\x\in X$. To see that $\y^I=\y^{sI}$, note that the minimization of the individual objective functions is performed over the same feasible set $X=X_{\Sys, \bm{\mathcal{C}}}$.\qedhere
	\end{enumerate}
\end{proof}

Note that unless the AiO problem \eqref{eq:P} has a very specific structure (e.g., it is fully decomposable, see Section \ref{subsec:independentsubsystems} below), there are no system valid solutions that map onto the subsystem ideal set or the system ideal set. 
Moreover, $\y^{ssI}\neq\y^{sI}$ in general.

\subsection{Block Diagonal Systems with Independent Subsystems}\label{subsec:independentsubsystems}



\noindent The purpose of this section is twofold. We first present a specific complex system that is  decomposable into a collection of subsystems such that their superior sets constitute the superior set of the complex system. Because of this simple but very useful relationship, this type of complex system appears ``easy'' and suggests a desirable structure for complex systems in general. We next show that the decomposition of this complex system is not unique and does not always lead to the same subsystems. We illustrate the effect of different decompositions of the same complex system.

We consider the AiO problem of the form
\begin{equation*}\tag{$P_I$}\label{eq:Pwithindependentsubsystems}
\begin{aligned}
\vmin \;& (\f_{1}(\x_{s_1}) , \f_{2}(\x_{s_2}) , \dots , \f_{|\bm{\mathcal{S}}|}(\x_{s_{|\bm{\mathcal{S}}|}}))\\
\text{s.t.} \; &  \x_{s_i}\in X_i,\quad i=1,\dots,|\bm{\mathcal{S}}|,
\end{aligned}
\end{equation*}
where the feasible set $X=X_1\times\cdots\times X_{|\bm{\mathcal{S}}|}$, i.e., there are no linking constraints combining variables from any pair of subsets $X_i$ and $X_j$ with $i\neq j$.  

A complex system graph $G$ depicted in Figure \ref{fig:blockdiagonalsystem}   is associated with problem \eqref{eq:Pwithindependentsubsystems} and results in the complex system $P_I(G)$.
Note that graph $G$ consists of $|\bm{\mathcal{S}}|$ independent subgraphs $G_i$, $i=1,\dots,|\bm{\mathcal{S}}|$, each of which represents a subsystem $s_i$. 
Since there are no linking constraints ($\bm{\mathcal{C}}=\emptyset$), the node set of $G$ is given by $\bm{\mathcal{V}}\cup\bm{\mathcal{S}}$ and the arc set of $G$ is given by $R(\bm{\mathcal{V}},\bm{\mathcal{S}})$. Moreover, since all variables are local variables,
all variable nodes $\xi_k\in\pred(\sigma_i)$ associated with subsystem $s_i$ are only connected to the subsystem node $\sigma_i$ for all $i=1,\dots,|\Sys|$. Hence, each subsystem $s_i$ corresponds to an induced subgraph $G_i$ of $G$ with nodes $\{\sigma_i\}\cup \pred(\sigma_i)$ and edges $\{(\xi_k,\sigma_i) : \xi_k\in\pred(\sigma_i) \}$, and there are no edges between two different subgraphs $G_i$ and $G_j$, $i\neq j$. 

The complex system $P_I(G)$ has the form 
\begin{equation*}\tag{$P_I(G)$}\label{eq:complexsystemPwithindependentsubsystems}
\begin{aligned}
\smin \;& (\f_{1}(\x_{s_1}) , \f_{2}(\x_{s_2}) , \dots , \f_{|\bm{\mathcal{S}}|}(\x_{s_{|\bm{\mathcal{S}}|}}))\\
\text{s.t.} \; &  \x_{s_i}\in X_i,\quad i=1,\dots,|\bm{\mathcal{S}}|.
\end{aligned}
\end{equation*}
The graph implies a decomposition of  $P_I(G)$  into $|\bm{\mathcal{S}}|$ independent subsystems $s_i$, $i=1,\dots,|\bm{\mathcal{S}}|$, i.e., into subsystems that have blocks of objective functions that are defined on disjoint subsets of variables. In effect, the complex system has a block diagonal structure, while every block is a subsystem assuming the form of the following MOP:
\begin{equation}\tag{$P_{I_i}$}\label{eq:independentsubsystemi}
\begin{array}{ll}
\vmin & \f_i(\x_{s_i}) \\
\text{s.t.} & \x_{s_i} \in X_i.
\end{array}
\end{equation}

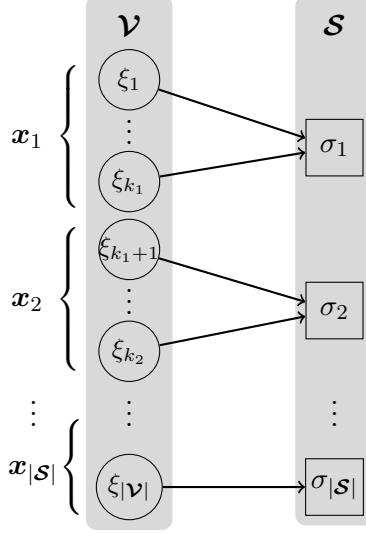
\begin{figure}
	\centering
\begin{tikzpicture}[scale=0.9]
\node[] (Variables) at (0,10.8){$\bm{\mathcal{V}}$};
\node[] (Subsystems) at (3,10.8){$\bm{\mathcal{S}}$};
\node [] (1) at (-1.2,4.3) {\(\x_{|\bm{\mathcal{S}}|\!\!\!}\;\left\{\rule{0pt}{0.7cm}\right.\)};
\node[] (xdots) at (-1.4,5.2) {$ \vdots $};
\node [draw, circle,minimum size=0.8cm,inner sep=2pt] (xilast) at (0,4) {\small$ \xi_{|\bm{\mathcal{V}}|} $};
\node[] (xidots) at (0,5.2) {$ \vdots $};
\node [] (1) at (-1.2,6.8) {\(\x_2\;\left\{\rule{0pt}{1.1cm}\right.\)};
\node [draw, circle,minimum size=0.8cm,inner sep=2pt] (xi4) at (0,6) {\small$ \xi_{k_2}$};
\node[] (xidots2) at (0,6.85) {$ \vdots $};
\node [draw, circle,minimum size=0.8cm,inner sep=2pt] (xi3) at (0,7.5) {\small$\!\!\!\xi_{k_1+1}\!\!\!$};
\node [] (1) at (-1.2,9.2) {\(\x_1\;\left\{\rule{0pt}{1.1cm}\right.\)};
\node [draw, circle,minimum size=0.8cm,inner sep=2pt] (xi2) at (0,8.5) {\small$ \xi_{k_1}$};
\node[] (xidots1) at (0,9.35) {$ \vdots $};
\node [draw, circle,minimum size=0.8cm,inner sep=2pt] (xi1) at (0,10) {\small$ \xi_1 $};
\node[draw, rectangle,minimum size=0.75cm,inner sep=2pt] (S1) at (3,9) {$ \sigma_1 $};
\node[draw, rectangle,minimum size=0.75cm,inner sep=2pt] (S2) at (3,6.6) {$ \sigma_2 $};
\node[] (Sdots) at (3,5.2) {$ \vdots $};
\node[draw, rectangle,minimum size=0.75cm,inner sep=2pt] (Slast) at (3,4) {$ \sigma_{|\bm{\mathcal{S}}|}  $};
\draw[->,thick] (xi1)--(S1);
\draw [->,thick] (xi2)--(S1);
\draw[->,thick] (xi3)--(S2);
\draw [->,thick] (xi4)--(S2);
\draw [->,thick] (xilast)--(Slast);
\begin{scope}[on background layer]
\node[fill=black!15,fit=(Variables) (xilast),rounded corners] {}; 
\node[fill=black!15,fit=(Subsystems) (Slast) ,rounded corners] {}; 
\end{scope}
\end{tikzpicture}
	\caption{\label{fig:blockdiagonalsystem} Complex system graph 
		with independent subsystems}
\end{figure}

In this case, the individual efficient sets of the subsystems can be combined into the $(\Sys,\Sys,\bm{\mathcal{C}})$-superior set of \eqref{eq:Pwithindependentsubsystems} and vice versa. In other words, system level ideal solutions 
are feasible for the AiO problem and thus also AiO efficient. Since $\Sup(i,i,\emptyset)\subseteq\R^{|\bm{\mathcal{V}}|}$, we will use the notation $\Sup(i,i,\emptyset)\bigl|_{X_i}$ to refer to the projection of the subsystem superior set $\Sup(i,i,\emptyset)$ onto the feasible set $X_i$ of subsystem $s_i$, $i=1,\dots,|\Sys|$.

\begin{proposition}\label{prop:Pwithindependentsubsets}
	For a complex system \eqref{eq:Pwithindependentsubsystems} with a decomposition into independent subsystems \eqref{eq:independentsubsystemi} it holds:
	\begin{enumerate}[label=(\alph*)]
		\item $E(P_I) = \Sup(\Sys,\Sys,\bm{\mathcal{C}}) = \Sup(1,1,\emptyset)\bigl|_{X_1} \times \ldots \times \Sup(|\Sys|,|\Sys|,\emptyset)\bigl|_{X_{|\Sys|}} $,
		\item $wE(P_I) \supseteq \wSup(1,1,\emptyset)\bigl|_{X_1} \times \ldots \times \wSup(|\Sys|,|\Sys|,\emptyset)\bigl|_{X_{|\Sys|}} $.
	\end{enumerate}
\end{proposition}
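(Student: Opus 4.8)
The plan is to exploit the product structure $X = X_{\Sys,\bm{\mathcal{C}}} = X_1\times\cdots\times X_{|\Sys|}$ (recall $\bm{\mathcal{C}}=\emptyset$), under which each $\x\in\R^{|\bm{\mathcal{V}}|}$ decomposes uniquely into subvectors $\x_{s_1},\dots,\x_{s_{|\Sys|}}$ supported on pairwise disjoint index sets, so that the AiO objective splits as $\f(\x)=(\f_1(\x_{s_1}),\dots,\f_{|\Sys|}(\x_{s_{|\Sys|}}))$ and every block may be altered independently while staying feasible. With this in hand, the first equality $E(P_I)=\Sup(\Sys,\Sys,\bm{\mathcal{C}})$ in part~(a) is immediate from Proposition~\ref{lemma:superiority_versus_efficieny}(a) applied to \eqref{eq:Pwithindependentsubsystems}, since a system valid solution is $(\Sys,\Sys,\bm{\mathcal{C}})$-superior if and only if it is efficient for the AiO problem. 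It then remains to identify this set with the product $\Sup(1,1,\emptyset)\big|_{X_1}\times\cdots\times\Sup(|\Sys|,|\Sys|,\emptyset)\big|_{X_{|\Sys|}}$, which I would prove by two inclusions.

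For ``$\subseteq$'', I would take $\x\in E(P_I)$ and suppose some block $\x_{s_j}$ were not efficient for \eqref{eq:independentsubsystemi}, yielding $\bar\x_{s_j}\in X_j$ with $\f_j(\bar\x_{s_j})\leqslant\f_j(\x_{s_j})$. Replacing only the $j$-th block and keeping all others fixed produces a point $\bar\x$ that is again feasible \emph{precisely because} $X$ is a product; it satisfies $\f_i(\bar\x)=\f_i(\x)$ for $i\neq j$ and $\f_j(\bar\x)\leqslant\f_j(\x)$, hence $\f(\bar\x)\leqslant\f(\x)$, contradicting $\x\in E(P_I)$. For ``$\supseteq$'', I would assume each $\x_{s_i}$ is efficient for its subsystem and, for contradiction, that some $\bar\x\in X$ gives $\f(\bar\x)\leqslant\f(\x)$. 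Then $\f_i(\bar\x_{s_i})\leqq\f_i(\x_{s_i})$ for all $i$, and because $\f(\bar\x)\neq\f(\x)$ the vectors must differ in some block $j$, giving $\f_j(\bar\x_{s_j})\leqslant\f_j(\x_{s_j})$ with $\bar\x_{s_j}\in X_j$, contradicting efficiency of $\x_{s_j}$. This settles part~(a).

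For part~(b) only the single inclusion is claimed, which I would prove directly. Let $\x$ lie in the product of weakly efficient sets, i.e.\ each $\x_{s_i}$ is weakly efficient for \eqref{eq:independentsubsystemi}, and suppose $\x\notin wE(P_I)$. Then there is $\bar\x\in X$ with $\f(\bar\x)<\f(\x)$, i.e.\ strict inequality in \emph{every} component; restricting to any single block $j$ yields $\f_j(\bar\x_{s_j})<\f_j(\x_{s_j})$ with $\bar\x_{s_j}\in X_j$, contradicting weak efficiency of $\x_{s_j}$. Hence the product is contained in $wE(P_I)$.

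The step I expect to carry the whole argument — and the only place the block-diagonal hypothesis is genuinely used — is the block-replacement in the ``$\subseteq$'' direction of part~(a): feasibility of the modified point relies on $X=X_1\times\cdots\times X_{|\Sys|}$ having no linking constraints, so altering one block cannot violate the others. This is also exactly why part~(b) can only be an inclusion and not an equality: weak efficiency of $\x$ for the AiO problem merely forbids improving all blocks strictly \emph{simultaneously}, so a single weakly efficient block already forces $\x\in wE(P_I)$ irrespective of the other blocks, making $wE(P_I)$ strictly larger than the product in general.
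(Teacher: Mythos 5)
Your proposal is correct and follows essentially the same route as the paper: the first equality via Proposition~\ref{lemma:superiority_versus_efficieny}(a), then the two inclusions of the product identity, with the block-replacement argument (feasibility of the modified point via the product structure $X=X_1\times\cdots\times X_{|\Sys|}$) carrying the ``$\subseteq$'' direction and the observation that domination must occur in at least one block carrying the ``$\supseteq$'' direction, and part~(b) argued exactly as in the paper. Your closing remark on why (b) cannot be an equality is a correct elaboration of the paper's terse ``the other inclusion is in general not true.''
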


\begin{proof}
	\begin{enumerate}[label=(\alph*)]
		\item The first equality results from Proposition \ref{lemma:superiority_versus_efficieny}(a). We first show that $\Sup(\Sys,\Sys,\bm{\mathcal{C}}) \supseteq \Sup(1,1,\emptyset)\bigl|_{X_1} \times \ldots \times \Sup(|\Sys|,|\Sys|,\emptyset)\bigl|_{X_{|\Sys|}} $.
		Let $\x\in \Sup(1,1,\emptyset)\bigl|_{X_1} \times$ $\ldots \times \Sup(|\Sys|,|\Sys|,\emptyset)\bigl|_{X_{|\Sys|}} $. Thus, $\x=(\x_1,\dots,\x_{|\bm{\mathcal{S}}|})^T\in X_1\times \cdots\times X_{|\bm{\mathcal{S}}|}=X$ is system valid. Suppose that $\x\not\in \Sup(\Sys,\Sys,\bm{\mathcal{C}})$, i.e., there is a system valid solution $\bar{\x}=(\bar{\x}_1,\dots,\bar{\x}_{|\bm{\mathcal{S}}|})^T\in X_1\times \cdots \times X_{|\bm{\mathcal{S}}|}$ such that
		$\f(\bar{\x}) \leqslant \f(\x)$. This implies that 
		$$\f_1(\bar{\x}_1) \leqslant \f_1(\x_1)\quad \vee \quad \f_2(\bar{\x}_2) \leqslant \f_2(\x_2) \quad \vee \quad \cdots \quad \vee \quad \f_{|\bm{\mathcal{S}}|}(\bar{\x}_{|\bm{\mathcal{S}}|}) \leqslant \f_{|\bm{\mathcal{S}}|}(\x_{|\bm{\mathcal{S}}|})$$
		and thus $\x\not\in \Sup(1,1,\emptyset)\bigl|_{X_1} \times \ldots \times \Sup(|\Sys|,|\Sys|,\emptyset)\bigl|_{X_{|\Sys|}} $, a contradiction.
		
		It remains to show that $\Sup(\Sys,\Sys,\bm{\mathcal{C}}) \subseteq \Sup(1,1,\emptyset)\bigl|_{X_1} \times \ldots \times \Sup(|\Sys|,|\Sys|,\emptyset)\bigl|_{X_{|\Sys|}} $.
		Now suppose that $\x\in \Sup(\Sys,\Sys,\bm{\mathcal{C}})$, i.e., $\x=(\x_1,\dots,\x_{|\bm{\mathcal{S}}|})^T\in X_1\times \cdots\times X_{|\bm{\mathcal{S}}|}$. Suppose that $\x_i\not\in \Sup(i,i,\emptyset)\bigl|_{X_i}$ for some $i\in\{1,\dots,|\bm{\mathcal{S}}|\}$. Then there is an $\bar{\x}_i\in X_i$ such that $\f_i(\bar{\x}_i)\leqslant \f_i(\x_i)$. But then $\bar{\x}:=(\x_1,\dots,\x_{i-1},\bar{\x}_i,\x_{i+1},\dots,\x_{|\bm{\mathcal{S}}|})^T\in X_1\times \cdots \times X_{|\bm{\mathcal{S}}|}$ is system valid and satisfies $\f(\bar{\x})\leqslant \f(\x)$, a contradiction. 
		\item Similarly, let $\x\in \wSup(1,1,\emptyset)\bigl|_{X_1} \times \ldots \times \wSup(|\Sys|,|\Sys|,\emptyset)\bigl|_{X_{|\Sys|}}$. This implies that $\x=(\x_1,\dots,\x_{|\bm{\mathcal{S}}|})^T\in X_1\times \cdots\times X_{|\bm{\mathcal{S}}|}=X$ is system valid. Suppose that $\x\not\in wE(P_I)$, i.e., there is a system valid solution $\bar{\x}=(\bar{\x}_1,\dots,\bar{\x}_{|\bm{\mathcal{S}}|})^T\in X_1\times \cdots \times X_{|\bm{\mathcal{S}}|}$ such that
		$\f(\bar{\x}) < 
		\f(\x)$. This implies that 
		$\f_i(\bar{\x}_i) < \f_i(\x_i)$ for all $i\in\{1,\dots,|\Sys|\}$ and thus $\x\not\in \wSup(1,1,\emptyset)\bigl|_{X_1} \times \ldots \times \wSup(|\Sys|,|\Sys|,\emptyset)\bigl|_{X_{|\Sys|}}$, a contradiction.
		
		The other inclusion is in general not true.\qedhere
	\end{enumerate}
\end{proof}

%

Note that in the special case that all subsystems of the AiO problem \eqref{eq:Pwithindependentsubsystems} are single objective optimization problems, Proposition \ref{prop:Pwithindependentsubsets} implies that the ideal point given by 
$(f_1(\Sup(1,1,\emptyset)),\dots,f_{|\bm{\mathcal{S}}|}(\Sup(|\bm{\mathcal{S}}|,|\bm{\mathcal{S}}|,\emptyset))$ would be feasible and thus the only element in the image of the efficient set $\f(E(P_I))$ in the objective space.

An AiO problem \eqref{eq:P} may be formulated in such a way that it immediately suggests a decomposition into independent subsystems. However, decompositions into independent subsystems may be hidden in a given problem formulation, even though they exist and probably lead to a better understanding of the AiO problem. 


\begin{example}\label{example:decopmandideal}
	To illustrate the implications  of using different decompositions 
	we consider an AiO problem that can naturally be associated with  two different decomposition graphs. 
	\begin{equation}\label{eq:diagonalexample}
	\begin{aligned}
	\min \;& (\f_{1}(\x_1) , \f_{2}(\x_2) , \f_{3}(\x_3) , \f_{4}(\x_3))\\
	\text{s.t.} \; &  \x\in X.
	\end{aligned}
	\end{equation}
	In some real-life context when Problem~\eqref{eq:diagonalexample} models, for example, a complex project with two decision makers, the complex system graph may consist of two subsystem nodes. The first decision maker is concerned about objectives $\f_{1}$ and $\f_{3}$ (and thus manipulates variables $\x_1$ and $\x_3$), while the other decision maker cares about objectives $\f_{2}$ and $\f_{4}$ (and thus deals with variables $\x_2$ and $\x_3$). In this context, $\x_3$ are global variables and the two subsystems are not independent. 
	The resulting decomposition of Problem~\eqref{eq:diagonalexample} into two subsystems such that
	the variables $\x_3$ are global variables used in both subsystems is given as follows: subsystem $s_1$ operates on the variables in $X_1\times X_3$ and aims at optimizing the objective vector $\f_I(\x_1,\x_3) := (\f_{1}(\x_1) , \f_{3}(\x_3))$ while subsystem $s_2$ operates on the variables in $X_2\times X_3$ and has objectives $\f_{II}(\x_2,\x_3) := (\f_{2}(\x_2) , \f_{4}(\x_3))$. This decomposition is given by
	{\small\begin{equation}\notag
	(s_1)\,\begin{aligned}
	\min \;& \f_I(\x_1,\x_3) = (\f_{1}(\x_1) , \f_{3}(\x_3)) \\
	\text{s.t.} \; &  \x_1\in X_1\\
	& \x_3 \in X_3
	\end{aligned}
	\hspace{1cm}
	(s_2)\,\begin{aligned}
	\min \;&  \f_{II}(\x_2,\x_3) = (\f_{2}(\x_2) , \f_{4}(\x_3)) \\
	\text{s.t.} \; &  \x_2 \in X_2\\
	& \x_3 \in X_3.
	\end{aligned}
	\end{equation}}
	Note that this decomposition does not yield independent subsystems, and in general we only have
	$wE(P_I) \supseteq \Sup(1,\{1,2\},\emptyset) \cup \Sup(2,\{1,2\},\emptyset)$, see Proposition~\ref{lemma:subsystem_efficieny} above.

	In another real-life context, the associated complex system graph may consist of three subsystem nodes and imply the alternative decomposition as follows:
	{\small\begin{equation}\notag
	(s_1)  \begin{aligned}
	\min \;& \f_I(\x_1) \!=\! \f_{1}(\x_1) \\
	\text{s.t.} \; &  \x_1\in X_1    
	\end{aligned}
	\hspace{0.2cm}
	(s_2)  \begin{aligned}
	\min \;&  \f_{II}(\x_2) \!=\! \f_{2}(\x_2)  \\
	\text{s.t.} \; &  \x_2 \in X_2
	\end{aligned}
	\hspace{0.2cm}
	(s_3)  \begin{aligned}
	\min \;&  \f_{III}(\x_3) \!=\! (\f_{3}(\x_3) , \f_{4}(\x_3) ) \\
	\text{s.t.} \; &  \x_3 \in X_3.
	\end{aligned}
	\end{equation}}
	In this case, Proposition \ref{prop:Pwithindependentsubsets} implies the stronger result that 
	$E(P_I) = \Sup(1,1,\emptyset)\bigl|_{X_1} \times$ $\Sup(2,2,\emptyset)\bigl|_{X_2} \times \Sup(3,3,\emptyset)\bigl|_{X_3}$.

\end{example}

%
%

Example \ref{example:decopmandideal} illustrates that a decomposition of a complex system into independent subsystems may be beneficial since it guarantees that the efficient set of the AiO problem can be obtained as the Cartesian product of the subsystem efficient sets according to Proposition \ref{prop:Pwithindependentsubsets}. In the context of an application such as splitting a complex project into subprojects, a decomposition into independent subsystems thus allows an independent action of the decision makers of all subprojects. On the other hand, if such a decomposition is not available or, for some reason, not practical, the preimages of  subsystem level ideal outcomes  are in general not feasible (i.e., Proposition \ref{prop:Pwithindependentsubsets} does not apply) and there is a need for conflict resolution among the subsystems. Concepts of consensus among subsystems and system compromise solutions are discussed in Section \ref{sec:compromise}.

\subsection{Hierarchical Algorithms}\label{sec:hierarchical}


\noindent In the following, we present hierarchical solution approaches for the computation of superior solutions of complex systems. 
These algorithms consecutively solve the optimization problems in a hierarchical order given by the decision maker. 
We assume that the subsystems are given in an appropriate order and are numbered likewise.

The first approach solves the subsystem optimization problems in a given order, where $(\bm{\mathcal{S}},\bm{\mathcal{C}})$-validity has to be fulfilled in every iteration. This approach is presented in Algorithm~\ref{alg:classlex} and returns a subset of the $(\bm{\mathcal{S}},\bm{\mathcal{S}},\bm{\mathcal{C}})$-superior solutions.

\begin{algorithm}[htb]\small
	\SetKw{Compute}{compute}
	\SetKw{Break}{break}
	\SetKwInOut{Input}{input}\SetKwInOut{Output}{output}
	\SetKwComment{command}{right mark}{left mark}
	
	\Input{a complex system graph \eqref{eq:PG}}
	
	\Output{a subset of the $(\bm{\mathcal{S}},\bm{\mathcal{S}},\bm{\mathcal{C}})$-superior solutions}
	\BlankLine
	$X^{\Sup}_1 := \Sup(1,\bm{\mathcal{S}},\bm{\mathcal{C}})$\\
	
	\For{$i\leftarrow 2$ \KwTo $|\bm{\mathcal{S}}|$}{
		$X^{\Sup}_i \leftarrow	\Sup(i, X^{\Sup}_{i-1}, \emptyset)$ 
	}	
	
	\Return $X^{\Sup}_{|\bm{\mathcal{S}}|}$
	\caption{Hierarchical Solution Algorithm with all feasibility and consistency constraints \label{alg:classlex}}
\end{algorithm}

\begin{remark}
	In Algorithm~\ref{alg:classlex} the following sequence of inclusions holds:
	\begin{equation*}
	X_{\bm{\mathcal{S}},\bm{\mathcal{C}}} \supseteq X^{\Sup}_1 \supseteq X^{\Sup}_2 \supseteq \ldots \supseteq X^{\Sup}_{|\bm{\mathcal{S}|}} 
	\end{equation*}
\end{remark}

\begin{theorem}
	If \(X^{Sup}_{|\bm{\mathcal{S}|}}\) is computed as described in Algorithm~\ref{alg:classlex}, then the following inclusion holds: 
	\(X^{\Sup}_{|\bm{\mathcal{S}}|} \subseteq \Sup(\bm{\mathcal{S}},\bm{\mathcal{S}},\bm{\mathcal{C}}).\)
\end{theorem}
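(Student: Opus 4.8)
The plan is to run a lexicographic-type argument directly from the definition of system dominance (by Proposition~\ref{lemma:superiority_versus_efficieny}(a) being $(\bm{\mathcal{S}},\bm{\mathcal{S}},\bm{\mathcal{C}})$-superior coincides with efficiency for \eqref{eq:P}, but working from the definition keeps the induction self-contained). Fix any $\x^*\in X^{\Sup}_{|\bm{\mathcal{S}}|}$. By the inclusion chain in the preceding Remark, $\x^*\in X^{\Sup}_i$ for every $i$, and in particular $\x^*\in X^{\Sup}_1=\Sup(1,\bm{\mathcal{S}},\bm{\mathcal{C}})\subseteq X_{\bm{\mathcal{S}},\bm{\mathcal{C}}}=X$, so $\x^*$ is system valid. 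It remains to show that no $\bar{\x}\in X$ satisfies $\f(\bar{\x})\preceq_{(\bm{\mathcal{S}},\bm{\mathcal{S}},\bm{\mathcal{C}})}\f(\x^*)$. For uniformity I would set $X^{\Sup}_0:=X_{\bm{\mathcal{S}},\bm{\mathcal{C}}}=X$ and note that, since $|F|=1$ in each filtering step, $X^{\Sup}_i=\Sup(i,X^{\Sup}_{i-1},\emptyset)$ reads simply as the classical efficient set of $\f_i$ over the previously filtered set $X^{\Sup}_{i-1}$ (for $i=1$ this is exactly $\Sup(1,\bm{\mathcal{S}},\bm{\mathcal{C}})$, the efficient set of $\f_1$ over $X^{\Sup}_0=X$).

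Suppose, for contradiction, that such a $\bar{\x}\in X$ exists. By Definition~\ref{def:systemdominance}(b) we have $\f_i(\bar{\x})\leqq\f_i(\x^*)$ for all $i\in\bm{\mathcal{S}}$, with $\f_j(\bar{\x})\leqslant\f_j(\x^*)$ for at least one index. Let $j$ be the \emph{smallest} such index. For every $i<j$ the relation $\f_i(\bar{\x})\leqq\f_i(\x^*)$ holds while strict dominance fails, which forces $\f_i(\bar{\x})=\f_i(\x^*)$; this equality on all earlier objectives is what drives the induction.

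Next I would prove by induction on $k$ that $\bar{\x}\in X^{\Sup}_k$ for every $k$ with $0\le k\le j-1$. The base case $k=0$ is immediate since $X^{\Sup}_0=X$ and $\bar{\x}\in X$. For the step, assume $\bar{\x}\in X^{\Sup}_k$ with $k+1\le j-1$. Since $\x^*\in X^{\Sup}_{k+1}=\Sup(k+1,X^{\Sup}_k,\emptyset)$, no point of $X^{\Sup}_k$ dominates $\x^*$ in the single objective $\f_{k+1}$; because $k+1<j$ gives $\f_{k+1}(\bar{\x})=\f_{k+1}(\x^*)$, no point of $X^{\Sup}_k$ dominates $\bar{\x}$ in $\f_{k+1}$ either, so $\bar{\x}$ is itself efficient for $\f_{k+1}$ over $X^{\Sup}_k$, i.e.\ $\bar{\x}\in X^{\Sup}_{k+1}$. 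This yields $\bar{\x}\in X^{\Sup}_{j-1}$. But then $\bar{\x}\in X^{\Sup}_{j-1}$ with $\f_j(\bar{\x})\leqslant\f_j(\x^*)$ contradicts $\x^*\in X^{\Sup}_j=\Sup(j,X^{\Sup}_{j-1},\emptyset)$, the efficient set of $\f_j$ over $X^{\Sup}_{j-1}$. Hence no dominating $\bar{\x}$ exists and $\x^*\in\Sup(\bm{\mathcal{S}},\bm{\mathcal{S}},\bm{\mathcal{C}})$.

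The routine ingredients are the inclusion chain and the reduction of $\Sup(i,\cdot,\emptyset)$ to classical efficiency. The one step needing care is pinning down the minimal strict index $j$ and justifying $\f_i(\bar{\x})=\f_i(\x^*)$ for all $i<j$, since this equality is precisely what lets $\bar{\x}$ inherit $\x^*$'s membership in each filter $X^{\Sup}_k$ up to level $j-1$. I expect this bookkeeping — together with correctly reading the overloaded notation $\Sup(i,X^{\Sup}_{i-1},\emptyset)$ as efficiency over the previously filtered set rather than over an $X_i$-type set — to be the main obstacle; everything else is a direct unfolding of the definitions.
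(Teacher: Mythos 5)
Your proof is correct and follows essentially the same route as the paper's: assume a system-dominating point exists, show it survives each filtering step $X^{\Sup}_k$, and derive a contradiction at a subsystem where the domination is strict. Your version merely makes explicit the induction and the equalities $\f_i(\bar{\x})=\f_i(\x^*)$ for $i<j$ that the paper's one-line justification leaves implicit.
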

\begin{proof}
	Let $X^{\Sup}_{|\bm{\mathcal{S}|}} \neq \emptyset$ and let $\x \in X^{\Sup}_{|\bm{\mathcal{S}|}}$.
	Assume $\x \notin \Sup(\bm{\mathcal{S}},\bm{\mathcal{S}},\bm{\mathcal{C}})$. Then it holds that
	\(\exists \x' \in X_{\bm{\mathcal{S}}, \bm{\mathcal{C}}}: \f_i(\x') \leqq \f_i(\x) \ \forall i \in \bm{\mathcal{S}} \text{ and } \exists j \in \bm{\mathcal{S}}: \f_j(\x') \leq \f_j(\x) .
	\)
	Since \(\x\in X^{\Sup}_i\) and \(\x'\) is at least as good as \(\x\) in all subsystems, it holds that \(\x'\in X^{\Sup}_i\) for all \(i\in\bm{\mathcal{S}}\). 
	This is a contradiction to \(\x \in\Sup(j, X^{\Sup}_{j-1},\emptyset)\).
	%
	%
	%
\end{proof}
The knowledge of the $(\bm{\mathcal{S}},\bm{\mathcal{C}})$-valid set is a strong assumption and cannot be assumed to be available in practice.
Thus, in the second approach, we weaken this assumption: We do not require system validity from the beginning, but add feasibility and linking constraints
imposed by the subsystems iteratively at the moment when they are considered in the algorithm. 

In detail, the algorithm is initialized by solving the first subproblem neglecting all linking constraints. The result is a set of superior solutions for the first subproblem. Then, the next subsystem is processed. 
In iteration \(i\), \(i\in\{2,\ldots,|\bm{\mathcal{S}}|\}\), the set of linking nodes that are adjacent to the subsystem node assigned to the current subsystem under consideration and all nodes assigned to previously treated subsystems is defined by
\[
C_i:=\Bigl\{j\in\bm{\mathcal{C}}  \colon\exists k\in\{1,\ldots,i-1\}  \colon \sigma_i,\sigma_k \in \pred(\kappa_j) \Bigr\} .
\]
Using the linking constraints given by $C_i$, the solutions of the $i$th subsystem take into account the linking constraints to all previously considered subsystems. The algorithm then iterates: new subsystems are solved taking into account constraints coming from all previously solved subsystems. 

Note that a drawback of this iterative approach is that there is no guarantee that the algorithm terminates with a feasible solution. 
Even after the first iteration, i.\,e., after the computation of \(X_{2}^{\Sup}\), it cannot be guaranteed that \(X_{2}^{\Sup}\) is a non-empty set.

\begin{algorithm}[!ht]\small
	\SetKw{Compute}{compute}
	\SetKw{Break}{break}
	\SetKwInOut{Input}{input}\SetKwInOut{Output}{output}
	\SetKwComment{command}{right mark}{left mark}
	
	\Input{a complex system graph \eqref{eq:PG}}
	
	\Output{a subset of the $(\bm{\mathcal{S}},\bm{\mathcal{S}},\bm{\mathcal{C}})$-superior solutions}
	
	\BlankLine
	$X_1^{\Sup}\leftarrow \Sup(1,X_{1,\emptyset},\emptyset)$\\
	
	\For{$i\leftarrow 2$ \KwTo $|\bm{\mathcal{S}}|$}{
		$X_{i}^{\Sup} \leftarrow	\Sup(i,X_{i,\emptyset}\cap X_{i-1}^{\Sup},C_i)$
	}	
	
	\Return $X_{\bm{\mathcal{|S|}}}^{\Sup}$
	\caption{Hierarchical Solution Algorithm with step-by-step inclusion of feasibility and consistency  \label{alg:HierarchicalAlgorithm}}
\end{algorithm}

%
%
%
%
%

\begin{theorem}
	If \(X^{Sup}_{|\bm{\mathcal{S}|}}\) is computed as described in Algorithm~\ref{alg:HierarchicalAlgorithm}, then the following inclusion holds: 
	\([X^{\Sup}_{|\bm{\mathcal{S}}|} \subseteq \Sup(\bm{\mathcal{S}},\bm{\mathcal{S}},\bm{\mathcal{C}}).\)
\end{theorem}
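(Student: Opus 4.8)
The plan is to follow the structure of the proof for Algorithm~\ref{alg:classlex}, arguing in two stages: first I would establish that every output solution is system valid, and then rule out any system-dominating competitor by a contradiction argument. Throughout, I would use the inclusion chain $X^{\Sup}_{|\bm{\mathcal{S}}|}\subseteq X^{\Sup}_{|\bm{\mathcal{S}}|-1}\subseteq\cdots\subseteq X^{\Sup}_1$, which holds because each $X_i^{\Sup}=\Sup(i,X_{i,\emptyset}\cap X_{i-1}^{\Sup},C_i)$ is contained in its own feasible region and that region explicitly includes $X_{i-1}^{\Sup}$. I would also repeatedly invoke that, since $|F|=1$ here, each $\Sup(i,\cdot,\cdot)$ is simply the efficient set of $\f_i$ over the indicated region.

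First I would show $X^{\Sup}_{|\bm{\mathcal{S}}|}\subseteq X_{\bm{\mathcal{S}},\bm{\mathcal{C}}}=X$. Fix $\x\in X^{\Sup}_{|\bm{\mathcal{S}}|}$. By the inclusion chain, $\x\in X_i^{\Sup}$ for every $i$, and the construction of $X_i^{\Sup}$ forces $\x\in X_{i,\emptyset}$, so $\x$ is feasible for all subsystems; it likewise forces $\x\in X_{\emptyset,C_i}$, so $\x$ is consistent with every constraint in $\bigcup_{i=2}^{|\bm{\mathcal{S}}|}C_i$. The key observation is that this union equals $\bm{\mathcal{C}}$: any linking constraint $\kappa_j$ joins at least two subsystems, and if $m$ denotes the largest index among them, then any smaller index $k$ with $\sigma_k\in\pred(\kappa_j)$ witnesses $j\in C_m$. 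Hence $\x$ is system consistent as well, and $\x\in X$.

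Next I would suppose, for contradiction, that $\x\notin\Sup(\bm{\mathcal{S}},\bm{\mathcal{S}},\bm{\mathcal{C}})$, giving some $\x'\in X_{\bm{\mathcal{S}},\bm{\mathcal{C}}}$ with $\f_i(\x')\leqq\f_i(\x)$ for all $i\in\bm{\mathcal{S}}$ and $\f_j(\x')\leqslant\f_j(\x)$ for some index $j$. Since $\x'$ is system valid, it is feasible for every subsystem and consistent with every $C_i$, so it automatically lies in the feasible region of each step. I would then show by induction that $\x'\in X_i^{\Sup}$ for $i=1,\dots,j-1$: at each such step $\x\in X_i^{\Sup}$ is efficient over the region, $\x'$ lies in that region, and $\f_i(\x')\leqq\f_i(\x)$ forces $\f_i(\x')=\f_i(\x)$ (a strict inequality would contradict the efficiency of $\x$); equality of objective values then makes $\x'$ efficient there as well. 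At step $j$, the point $\x'$ now lies in $X_{j,\emptyset}\cap X_{j-1}^{\Sup}\cap X_{\emptyset,C_j}$ while $\f_j(\x')\leqslant\f_j(\x)$, contradicting $\x\in X_j^{\Sup}$ (and $\x\in X_j^{\Sup}$ holds by the inclusion chain).

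The main obstacle, and the point where this proof genuinely departs from that of Algorithm~\ref{alg:classlex}, is the incremental treatment of constraints: the intermediate sets $X_i^{\Sup}$ are built with only partial feasibility and consistency information, so one must verify that a \emph{system-valid} dominating solution $\x'$ is nevertheless admissible at every stage. This hinges on the two bookkeeping facts above---that system feasibility gives $\x'\in X_{i,\emptyset}$ for all $i$, and that $\bigcup_i C_i=\bm{\mathcal{C}}$ gives $\x'\in X_{\emptyset,C_i}$ for all $i$---which together guarantee that $\x'$ survives into each restricted subproblem and let the efficiency argument proceed exactly as in the fully constrained case.
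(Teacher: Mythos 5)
Your proof is correct and follows essentially the same contradiction argument as the paper: a system-valid dominator $\x'$ is shown to survive into every intermediate set $X_i^{\Sup}$, which then contradicts the efficiency of $\x$ over the region $X_{j,\emptyset}\cap X_{j-1}^{\Sup}\cap X_{\emptyset,C_j}$ at the index $j$ where strict domination occurs. You are in fact more careful than the paper's one-sentence version of this induction, since you also verify explicitly that the returned solutions are system valid (via the observation that $\bigcup_{i} C_i=\bm{\mathcal{C}}$) before negating superiority --- a step the paper's proof silently skips.
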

\begin{proof}
	Let $\x \in X^{\Sup}_{|\bm{\mathcal{S}}|}$. 
	Assume $\x \notin \Sup(\bm{\mathcal{S}},\bm{\mathcal{S}},\bm{\mathcal{C}})$. Then it holds that
	\(
	\exists \x' \in X_{\bm{\mathcal{S}}, \bm{\mathcal{C}}}: \f_i(\x') \leqq \f_i(\x) \ \forall i \in \bm{\mathcal{S}} \text{ and } \exists j \in \bm{\mathcal{S}}: \f_j(\x') \leq \f_j(\x).
	\)
	Since \(\x\in X^{\Sup}_i\) and \(\f_i(\x')\leqq \f_i(x)\) and \(\x'\in X_{\bm{\mathcal{S}}, \bm{\mathcal{C}}}\), it holds that \(\x'\in X^{\Sup}_i\) for all \(i\in\bm{\mathcal{S}}\). 
	This is a contradiction to \(\x \in\Sup(j, X_{j,\emptyset} \cap X^{\Sup}_{j-1},C_j)\), since \(\f_j(\x') \leq \f_j(\x)\).
	%
	%
\end{proof}

%
%
%


Since in Algorithm 2 feasibility may not be achieved, in the following we relax the superiority to $\varepsilon$-superiority to increase the chances for attaining it.

\begin{definition}
	Given \(\varepsilon\geq 0\) an \((S,C)\)-valid  solution \(\x\in X_{S,C}\) is called \emph{\((F,S,C)\)-\(\varepsilon\)-superior} if 
	\[\nexists \bar{\x}\in X_{S,C}: (1+\varepsilon)\, \f(\bar{\x})\preceq_{(F,S,C)} \f(\x) .\] 
	The set of all $(F,S,C)$-\(\varepsilon\)-superior solutions is denoted by \(\epsSup(F,S,C)\).
\end{definition}

Note that weakly and strictly $\varepsilon$-superior solutions can be defined analogously, and that for \(\varepsilon=0\) the definition of \(\varepsilon\)-superiority and superiority are equivalent. 

\begin{algorithm}[!ht]\small
	\SetKw{Compute}{compute}
	\SetKw{Break}{break}
	\SetKwInOut{Input}{input}\SetKwInOut{Output}{output}
	\SetKwComment{command}{right mark}{left mark}
	
	\Input{a complex system graph \eqref{eq:PG}, a parameter $\varepsilon >0$}
	
	\Output{a subset of the $(\bm{\mathcal{S}},\bm{\mathcal{S}},\bm{\mathcal{C}})$-\(\varepsilon\)-superior solutions}
	
	$X_{1}^{\epsSup} \leftarrow	\epsSup\bigl(1,X_{1,\emptyset} ,\emptyset \bigr)$\\
	
	\For{$i\leftarrow 2$ \KwTo $|\bm{\mathcal{S}}|-1$}{
		$X_{i}^{\epsSup} \leftarrow	\epsSup\bigl(i,X_{i,\emptyset} \cap X_{i-1}^{\epsSup} ,C_i \bigr)$
	}	
	$X_{|\bm{\mathcal{S}}|}^{\epsSup} \leftarrow	\Sup\bigl(|\bm{\mathcal{S}}|,X_{|\bm{\mathcal{S}}|,\emptyset} \cap X_{|\bm{\mathcal{S}}|-1}^{\epsSup} , C_{|\bm{\mathcal{S}}|} \bigr)$\\
	\Return $X_{|\bm{\mathcal{S}}|}^{\epsSup}$
	\caption{Hierarchical \(\varepsilon\)-superior Solution Algorithm \label{alg:HierarchicalEpsAlgorithm}}
\end{algorithm}

Algorithm \ref{alg:HierarchicalEpsAlgorithm} is similar to Algorithm \ref{alg:HierarchicalAlgorithm} with the exception that we compute the set of \(\varepsilon\)-superior solutions in each iteration. However, in the last iteration there is no need to take care of upcoming feasibility issues such that it is sufficient to compute the set of superior solutions.

\begin{theorem}
	If \(X^{\epsSup}_{|\bm{\mathcal{S}|}}\) is computed as described in Algorithm~\ref{alg:HierarchicalEpsAlgorithm}, then the following inclusion holds: 
	\(X^{\epsSup}_{|\bm{\mathcal{S}}|} \subseteq \epsSup(\bm{\mathcal{S}},\bm{\mathcal{S}},\bm{\mathcal{C}}).\)
\end{theorem}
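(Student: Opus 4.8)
The plan is to follow the template of the two preceding theorems: argue by contradiction, produce a system-valid witness $\x'$ that $\varepsilon$-system-dominates $\x$, and then show inductively that $\x'$ \emph{survives} each iteration of Algorithm~\ref{alg:HierarchicalEpsAlgorithm}, ultimately contradicting the (\,$\varepsilon$-\,)superiority property defining the set in which $\x$ lies. First I would record the nesting $X_{|\bm{\mathcal{S}}|}^{\epsSup} \subseteq X_{|\bm{\mathcal{S}}|-1}^{\epsSup} \subseteq \cdots \subseteq X_1^{\epsSup}$, which holds because every set $\epsSup(i, X_{i,\emptyset}\cap X_{i-1}^{\epsSup}, C_i)$ (and likewise the terminal $\Sup(\cdots)$) is contained in its own valid set and hence in $X_{i-1}^{\epsSup}$. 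Consequently $\x \in X_i^{\epsSup}$ for every $i$. Assuming $\x \notin \epsSup(\bm{\mathcal{S}}, \bm{\mathcal{S}}, \bm{\mathcal{C}})$ yields some $\x' \in X_{\bm{\mathcal{S}},\bm{\mathcal{C}}}$ with $(1+\varepsilon)\f_i(\x') \leqq \f_i(\x)$ for all $i\in\bm{\mathcal{S}}$ and $(1+\varepsilon)\f_j(\x')\leqslant \f_j(\x)$ for some index $j$. Since $\x'$ is system valid it is $X_{i,\emptyset}$-feasible and $C_i$-consistent for every $i$, so the only thing that can stop $\x'$ from lying in a given $X_i^{\epsSup}$ is the $\varepsilon$-superiority requirement.

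The inductive claim is that $\x' \in X_i^{\epsSup}$ for every $i < j$. If it failed at some such $i$, there would be a point $\x''$, valid for the $i$-th subproblem, with $(1+\varepsilon)\f_i(\x'')\leqslant \f_i(\x')$; combining this with $(1+\varepsilon)\f_i(\x')\leqq\f_i(\x)$ should yield $(1+\varepsilon)\f_i(\x'')\leqslant\f_i(\x)$, contradicting $\x \in X_i^{\epsSup}$. Once $\x'\in X_{j-1}^{\epsSup}$ is established, I would finish by distinguishing two cases. If $j<|\bm{\mathcal{S}}|$, then $\x'$ is valid for the $j$-th subproblem and $(1+\varepsilon)\f_j(\x')\leqslant\f_j(\x)$ contradicts $\x\in\epsSup(j, X_{j,\emptyset}\cap X_{j-1}^{\epsSup}, C_j)$. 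If $j=|\bm{\mathcal{S}}|$, the last iteration only computes $\Sup$, so I would first downgrade the strict relation $(1+\varepsilon)\f_{|\bm{\mathcal{S}}|}(\x')\leqslant\f_{|\bm{\mathcal{S}}|}(\x)$ to the ordinary relation $\f_{|\bm{\mathcal{S}}|}(\x')\leqslant\f_{|\bm{\mathcal{S}}|}(\x)$ and contradict $\x\in\Sup(|\bm{\mathcal{S}}|, X_{|\bm{\mathcal{S}}|,\emptyset}\cap X_{|\bm{\mathcal{S}}|-1}^{\epsSup}, C_{|\bm{\mathcal{S}}|})$.

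The main obstacle is exactly the two inequality manipulations carrying the multiplicative factor $(1+\varepsilon)$. Chaining $(1+\varepsilon)\f_i(\x'')\leqslant\f_i(\x')$ with $(1+\varepsilon)\f_i(\x')\leqq\f_i(\x)$ does not transitively return a single factor for free: it naively produces $(1+\varepsilon)^2\f_i(\x'')\leqslant\f_i(\x)$, and collapsing $(1+\varepsilon)^2$ back to $(1+\varepsilon)$, as well as the downgrade step $\f\leqq(1+\varepsilon)\f$ used in the case $j=|\bm{\mathcal{S}}|$, both rest on the objective values being nonnegative. I would therefore make the standing nonnegativity assumption on $\f$ explicit (as is customary for multiplicative $\varepsilon$-relaxations), and I would carefully check that the $\neq$-part of each $\leqslant$ is preserved: a strict decrease in at least one coordinate is inherited under multiplication by the positive factor $(1+\varepsilon)$, so the strictness needed to contradict $\varepsilon$-superiority (respectively ordinary superiority in the terminal step) is maintained.
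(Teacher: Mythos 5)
Your argument is correct and follows the paper's own proof essentially step for step: assume $\x\in X^{\epsSup}_{|\bm{\mathcal{S}}|}$ is not $(\bm{\mathcal{S}},\bm{\mathcal{S}},\bm{\mathcal{C}})$-$\varepsilon$-superior, extract a system valid witness $\x'$, use the nesting of the sets $X_i^{\epsSup}$ to show that $\x'$ survives every iteration, and derive the contradiction at the index $j$ where the strict relation $(1+\varepsilon)\f_j(\x')\leqslant\f_j(\x)$ holds. You are in fact more careful than the paper, which asserts the survival claim and the final contradiction (including in the terminal $\Sup$ step) in one line and never acknowledges that the manipulations of the multiplicative factor $(1+\varepsilon)$ rest on nonnegativity of the objective values --- an assumption you rightly insist on making explicit.
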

\begin{proof}
	Let $\x \in X^{\epsSup}_{|\bm{\mathcal{S}}|}$. 
	Assume $\x \notin \epsSup(\bm{\mathcal{S}},\bm{\mathcal{S}},\bm{\mathcal{C}})$. Then it holds that
	\(
	\exists \x' \in X_{\bm{\mathcal{S}}, \bm{\mathcal{C}}}: (1+\varepsilon) \f_i(\x') \leqq \f_i(\x) \ \forall i \in \bm{\mathcal{S}} \text{ and } \exists j \in \bm{\mathcal{S}}: (1+\varepsilon)\f_j(\x') \leq \f_j(\x).
	\)
	Since \(\x\in X^{\epsSup}_i\) and \((1+\varepsilon)\f_i(\x')\leqq \f_i(x)\) and \(\x'\in X_{\bm{\mathcal{S}}, \bm{\mathcal{C}}}\), it holds that \(\x'\in X^{\epsSup}_i\) for all \(i\in\bm{\mathcal{S}}\). 
	This is a contradiction to \(\x \in\epsSup(j, X_{j,\emptyset} \cap X^{\epsSup}_{j-1},C_j)\), since \((1+\varepsilon)\f_j(\x') \leq \f_j(\x)\).
\end{proof}

Both Algorithms~\ref{alg:HierarchicalEpsAlgorithm} and \ref{alg:HierarchicalEpsAlgorithmAdaptive} implement the same solution strategy. Algorithm~\ref{alg:HierarchicalEpsAlgorithmAdaptive} additionally includes an adaptive selection of the approximation parameter~\(\varepsilon\). Thereby the minimum value of \(\varepsilon\) (up to an accuracy of \(\delta\)) is selected such that set of computed \(\varepsilon\)-superior solutions is non-empty. In detail, the algorithm starts with \(\varepsilon=0\). If no feasible solution is found, the lower bound \(lb\) is set to \(\varepsilon\) and a new larger \(\varepsilon\) is chosen. On the other hand, if a solution is found, the upper bound \(ub\) is set to \(\varepsilon\) and a smaller \(\varepsilon\) is chosen. In the algorithm, \(\varepsilon\) is chosen via bisection search. Note that there are other possible selection methods for \(\varepsilon_j\in(lb,ub)\), like e.\,g., the golden section search. 


\begin{algorithm}[!ht]\small
	\SetKw{Compute}{compute}
	\SetKw{Break}{break}
	\SetKwInOut{Input}{input}\SetKwInOut{Output}{output}
	\SetKwComment{command}{right mark}{left mark}
	
	\Input{a complex system graph \eqref{eq:PG}, $M>\delta>0, M$ large}
	
	\Output{a guaranteed accuracy $\varepsilon$ and a subset of the $(\bm{\mathcal{S}},\bm{\mathcal{S}},\bm{\mathcal{C}})$-\(\varepsilon\)-superior solutions}
	
	$j \leftarrow 1$  \\
	$lb \leftarrow 0$ \\
	$ub \leftarrow M$ \\
	\(\varepsilon_1 \leftarrow 0\) \\
	\While{$ub - lb >\delta$}{
		$X_{1}^{\epsjSup} \leftarrow	\epsjSup\bigl(1,X_{1,\emptyset} ,\emptyset \bigr)$\\
		
		$i \leftarrow 1$ \\

		\While{$X_i^{\epsjSup} \neq \emptyset \land i < |\bm{\mathcal{S}}|$}
		{
			$i \leftarrow i+1$ \\
			$X_i^{\epsjSup} \leftarrow \epsjSup(i,X_{i,\emptyset}\cap X_{i-1}^{\epsjSup},C_i)$	
			
		}
		\If{$i=|\bm{\mathcal{S}}|\land X_i^{\epsjSup}\neq\emptyset$}
		{
			$ub \leftarrow \varepsilon_j$ \\
			$\varepsilon_{j+1} \leftarrow (lb+ub)/2$ \\
			$X_{|\bm{\mathcal{S}}|}^{\varepsilon^*\text{-}\Sup} \leftarrow X_{|\bm{\mathcal{S}}|}^{\epsjSup} $ 
		}
		\Else{
			$lb \leftarrow \varepsilon_j$ \\
			$\varepsilon_{j+1}\leftarrow(lb+ub)/2$ \\
		}
		$j \leftarrow j+1$
		
	}	
	
	\Return  $\varepsilon^* \leftarrow ub$, \(X_{|\bm{\mathcal{S}}|}^{\varepsilon^*\text{-}\Sup}\)
	\caption{Hierarchical \(\varepsilon\)-superior Solution Algorithm with $\varepsilon$-updates \label{alg:HierarchicalEpsAlgorithmAdaptive}}
\end{algorithm}

\subsection{Scalarization Based Approach}
\noindent Since there are several ways to decompose a complex system, the specific structure might be important. For illustration, we consider a special case of a complex system with additive separable objective functions and separable constraints.

We define the variable vectors ${\x}_{s_i}=(\x_0,\x_i)$, the feasible sets
$X_i=\{\x_{s_i}\in\mathbb{R}^{|\pred(\sigma_i)|} : A_0 \, \x_0 \leqq \b_0, \, A_i \, \x_i \leqq \b_i\}$, and the objective function vectors $\f_i(\x_{s_i}) = (\g_i(\x_i) + \g_{0i}(\x_0) )$, \(i = 1,\ldots,|\bm{\mathcal{S}}|\).

Consider the following AiO problem.
\begin{subequations}        
	\begin{align}
	\min \;&\Bigl(\g_1(\x_1) + \g_{01}(\x_0),\ldots,\g_{|\bm{\mathcal{S}}|}(\x_{|\bm{\mathcal{S}}|}) + \g_{0{|\bm{\mathcal{S}}|}}(\x_0) \Bigr)\\
	\text{s.t.} \;& A_i \, \x_i \leqq \b_i \ ,i=0,\ldots,{|\bm{\mathcal{S}}|}  \label{eqa:sc1}\\
	& \x_i \geqq \0 \ ,i=0,\ldots,{|\bm{\mathcal{S}}|}
	\end{align}
\end{subequations}

First, we scalarize the AiO problem and then we decompose it in the following way for \(i=1,\ldots, |\bm{\mathcal{S}}|\):

\begin{equation}\notag
\begin{aligned}
\min \;& \bm{w}_i^\top \g_i(\x_i)\\
\text{s.t.} \;& A_i \, \x_i \leqq \b_i\\
& \x_i \geqq \0
\end{aligned}\quad (S_i)
\hspace{2cm}
\begin{aligned}
\min \;& (\bm{w}_1^\top,\ldots,\bm{w}_{|\bm{\mathcal{S}}|}^\top) \left(\begin{array}{c}\g_{01}(\x_0)\\ \vdots \\ \g_{0{|\bm{\mathcal{S}}|}}(\x_0)\end{array}\right)\\
\text{s.t.} \;& A_0 \, \x_0 \leqq \b_0 \\
& \x_0 \geqq \0
\end{aligned}\quad (S_0)
\end{equation}

\begin{proposition}
	Let \(\x_i^*\) be optimal for \((S_i)\), \(i = 0,\ldots,|\bm{\mathcal{S}}|\) and let \(\x^* \coloneqq (x_0^*,\ldots,x_{|\bm{\mathcal{S}|}}^*)\). Then, \(\x^*\) is weakly efficient for the AiO problem.
\end{proposition}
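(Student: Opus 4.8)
The plan is to recognize $\x^*$ as an optimal solution of a single weighted-sum scalarization of the AiO problem, and then to invoke the classical result that minimizers of weighted-sum scalarizations with nonnegative weights are weakly efficient.

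First I would write out the weighted-sum scalarization of the AiO objective using the block weights $\bm{w}_1,\ldots,\bm{w}_{|\bm{\mathcal{S}}|}$. Since the $i$-th AiO objective equals $\f_i(\x_{s_i}) = \g_i(\x_i) + \g_{0i}(\x_0)$, the scalarized objective reads
$$\sum_{i=1}^{|\bm{\mathcal{S}}|} \bm{w}_i^\top \f_i(\x_{s_i}) = \sum_{i=1}^{|\bm{\mathcal{S}}|} \bm{w}_i^\top \g_i(\x_i) + \sum_{i=1}^{|\bm{\mathcal{S}}|} \bm{w}_i^\top \g_{0i}(\x_0).$$
The first sum is additively separable across the local blocks $\x_1,\ldots,\x_{|\bm{\mathcal{S}}|}$, the $i$-th summand being exactly the objective of $(S_i)$, while the second sum is precisely the objective of $(S_0)$.

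Next I would observe that the AiO feasible set is the Cartesian product of the blockwise feasible sets $\{\x_i : A_i\,\x_i \leqq \b_i, \ \x_i \geqq \0\}$ for $i = 0,\ldots,|\bm{\mathcal{S}}|$, because no constraint couples different blocks. Combining this product structure with the additive separation of the scalarized objective, the minimization of the weighted sum over the AiO feasible set decouples into the $|\bm{\mathcal{S}}|+1$ independent problems $(S_1),\ldots,(S_{|\bm{\mathcal{S}}|})$ and $(S_0)$. Consequently, concatenating their optimizers $\x_0^*,\ldots,\x_{|\bm{\mathcal{S}}|}^*$ yields a global minimizer $\x^*$ of the scalarized AiO problem. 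Finally I would appeal to the standard weighted-sum theorem (see, e.g., \cite{Ehrgott}): any optimal solution of a weighted-sum scalarization with a nonnegative, nonzero weight vector is weakly efficient for the underlying MOP. Since the combined weight vector $(\bm{w}_1,\ldots,\bm{w}_{|\bm{\mathcal{S}}|})$ is assumed nonnegative and nonzero, $\x^*$ is weakly efficient for the AiO problem, as claimed.

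The main obstacle is bookkeeping rather than depth: one must verify carefully that the coupling through the shared variable $\x_0$ does not obstruct the separation. The key point is that $\x_0$ enters every AiO objective, but only through the terms $\g_{0i}(\x_0)$, whose weighted sum is collected into the single block problem $(S_0)$; since $\x_0$ is constrained only by $A_0\,\x_0 \leqq \b_0$, $\x_0 \geqq \0$ independently of the other blocks, the joint minimization still decouples. A minor point to pin down is the precise sign assumption on the weights needed to obtain weak (as opposed to strict) efficiency; nonnegativity together with nonvanishing of the combined weight vector suffices.
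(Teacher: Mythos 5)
Your proposal is correct and follows essentially the same route as the paper's own proof: scalarize the AiO problem by the weighted sum, exploit the additive separability of both the objective and the constraints to decouple it into $(S_0), (S_1), \ldots, (S_{|\bm{\mathcal{S}}|})$, and invoke the classical result that weighted-sum minimizers with nonnegative, nonzero weights are weakly efficient. Your version is somewhat more careful than the paper's (which leaves the sign assumption on the $\bm{w}_i$ and the treatment of the shared block $\x_0$ implicit), but the argument is the same.
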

\begin{proof}
	Consider the weighted sum of the objective function:
	\begin{equation}\notag
	\operatorname{min}  \sum_{i=1}^{|\bm{\mathcal{S}}|}\bm{w}_i^\top(\g_i(\x_i)+ \g_{0i}(\x_0))
	= \operatorname{min} \sum_{i=1}^{|\bm{\mathcal{S}}|} \bm{w}_i^\top \g_i(\x_i) + \sum_{i=1}^{|\bm{\mathcal{S}}|} \bm{w}_i^\top \g_{0i}(\x_0) 
	\end{equation}
	This is equivalent to solving the proposed decomposition into the problems \((S_0)\) and \((S_i)\), \(i = 1,\ldots,|\bm{\mathcal{S}}|\) due to the separability of the constraints.
	
	Further, we know that optimal solutions of the weighted sum of the AiO problem are at least weakly efficient for it, which concludes the proof.
\end{proof}

\section{Obtaining a Compromise}\label{sec:compromise}

\noindent For complex systems with a large number of subsystems, objective functions and linking constraints, finding superior solutions is highly challenging. In the case that the algorithms presented in Section \ref{sec:hierarchical} do not return any satisfying solutions, it is thus a viable option to strive for a \emph{compromise} between all subsystems. There are many possible ways to define compromise solutions for complex systems. For example, we may compromise on subsystem feasibility and/or system validity by relaxing one or several constraints, or we may trade-off between objective functions by ignoring some of them. We will exemplify the concept of compromise solutions for complex systems on a distance-based model that is motivated by compromise programming in multiobjective optimization \citep[see, e.g.,][]{RJ92}. In this context, a compromise solution is a solution which is not necessarily superior for every subsystem, but which is as close as possible to the superior sets of all subsystems with 
respect to some distance measure.

The concept of distance-based compromise can be formulated either in the decision or in the objective space. We refer to the former as \emph{decision space compromise} and to the latter as \emph{objective space compromise}. A major challenge when considering objective space compromise is that the preimage of a selected outcome vector is usually not unique. Moreover, preimages are generally hard to compute, depending on the type of the objectives.

When considering decision space compromise, the computational complexity depends (besides the complexity of finding system valid solutions) on the chosen distance function and on the objective that is used to evaluate candidate solutions. The problem can be interpreted as a specific location problem that is formulated on the system valid set $X_{\bm{\mathcal{S}},\bm{\mathcal{C}}}\subseteq\R^{|\bm{\mathcal{V}}|}$: Given, for example, the superior sets of all subsystems, find a system valid solution that minimizes (1) the (weighted) sum of distances from these sets (such problems are referred to as Weber or median problems), or (2) the maximum of all (weighted) distances from these sets. 



\subsection{Decision Space Compromise Based on Median Problems}
\label{sec:compromise:basics}

\noindent While it is generally difficult to compute $(\bm{\mathcal{S}},\bm{\mathcal{S}},\bm{\mathcal{C}})$-superior solutions, finding $(i,i,\emptyset)$-superior solutions, $i=1,\dots,|\bm{\mathcal{S}}|$, is often considerably easier since it requires only ``local'' knowledge from the individual subproblems. We thus use the sets $\Sup(i,i,\emptyset)$ as reference sets when searching for compromise solutions. 
We consider classical $l_p$-distances for measuring distances and focus, in particular, on the case $p=1$. The $l_1$-distance between a point $\x\in\mathbb{R}^{|\bm{\mathcal{V}}|}$ and a closed set $M\subset\mathbb{R}^{|\bm{\mathcal{V}}|}$ is defined as 
$ l_1(\x,M):=\min_{\bar{\x}\in M} l_1\bigl(\x,\bar{\x}\bigr)$, 
where $l_1(\x,\bar{\x})=\sum_{i=1}^{|\bm{\mathcal{V}}|} |\bar{x}_i-x_i|$ denotes the $l_1$-distance of $\x,\bar{\x}\in\mathbb{R}^{|\bm{\mathcal{V}}|}$.

\begin{definition}[$l_1$-median compromise]\mbox{}
	Suppose that the subsystem superior sets $\Sup(i,i,\emptyset)$ are given for all $i\in\bm{\mathcal{S}}$, and consider the following median location problem with $l_1$ distances:
	\begin{equation}\tag{$C_W(\Sup)$}\label{eq:Weber_Original}
	\begin{aligned}
	\min \;& \sum_{i=1}^{|\bm{\mathcal{S}}|}l_1(\x,\Sup(i,i,\emptyset))\\
	\text{s.t.} \; &  \x\in\mathbb{R}^{|\bm{\mathcal{V}}|}.
	\end{aligned}
	\end{equation}
	Then an optimal solution of \eqref{eq:Weber_Original} is called \emph{$l_1$-median-compromise}.
\end{definition}
Note that in general an optimal solution of problem \eqref{eq:Weber_Original} may not be system valid.
An additional difficulty arises from the computation of the distance between a single point (the sought compromise solution) and a reference set $\Sup(i,i,\emptyset)$, $i\in\bm{\mathcal{S}}$. This requires in general knowledge of the entire set $\Sup(i,i,\emptyset)$. 
Moreover, (\ref{eq:Weber_Original}) is usually still a highly complex nonconvex optimization problem since $\Sup(i,i,\emptyset)$ is a nonconvex set in general. The goal of reducing the computational effort is thus not reached with this approach.
As a consequence, it may be advantageous to represent the subsystem superior sets $\Sup(i,i,\emptyset)$, $i\in\bm{\mathcal{S}}$, in problem~(\ref{eq:Weber_Original}) by appropriately chosen (sets of) reference points or reference sets. 

\begin{definition}[Reference set]\mbox{} \label{def:referenceset}
	Let $i\in\bm{\mathcal{S}}$. A finite set $R_i:=\{\bm{r}^1,\dots,\bm{r}^{|R_i|}\}\subseteq\Sup(i,i,\emptyset)\subseteq\mathbb{R}^{|\bm{\mathcal{V}}|}$ of representative points for $\Sup(i,i,\emptyset)$ is called a set of \emph{reference points} for subsystem $s_i$. The union of all reference points over all subsystems, $\bm{\mathcal{R}}=\bigcup_{i\in\bm{\mathcal{S}}} R_i$, is called \emph{reference set}. 
\end{definition}

\begin{definition}[$l_1$-median compromise based on reference points]\mbox{}
	Let $\bm{\mathcal{R}}$ be a reference set. Then the \emph{$l_1$-median problem based on reference points} can be stated as
	\begin{equation}\tag{$C_W(\bm{\mathcal{R}})$}\label{eq:Weber_furtherrelaxed}
	\begin{aligned}
	\min \;& \sum_{j=1}^{|\bm{\mathcal{R}}|}l_1(\x,\bm{r}^j)\\
	\text{s.t.} \; &  \x\in\mathbb{R}^{|\bm{\mathcal{V}}|}.
	\end{aligned}
	\end{equation}
\end{definition}
Problem (\ref{eq:Weber_furtherrelaxed}) is a well-known median problem with $l_1$-norms in $\mathbb{R}^{|\bm{\mathcal{V}}|}$, see, for example, \cite{Sim50} for an early reference. 

While unconstrained median location problems are well-understood and relatively easy to solve when formulated in the plane $\R^2$, this is generally not the case in higher dimensions. However, there also exist specific solution methods for median problems in arbitrary dimension. 
The following result is a classical result in location theory and particularly useful when all reference points are system valid and the system valid set is convex.
\begin{lemma}\label{OptimalSolutionInConvexHull}
	Let a location problem of the form of ~(\ref{eq:Weber_furtherrelaxed}) be given. Then there exists an optimal solution $\x^*$ of~(\ref{eq:Weber_furtherrelaxed}) such that $\x^*$ is in the convex hull of $\bm{\mathcal{R}}$.
\end{lemma}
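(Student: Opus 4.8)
The plan is to combine a compactness argument with the coordinate separability of the $l_1$ objective. First I would set $g(\x):=\sum_{j=1}^{|\bm{\mathcal{R}}|} l_1(\x,\bm{r}^j)$ and record that $g$ is convex and continuous (each summand $l_1(\cdot,\bm{r}^j)$ is a norm composed with a translation, hence convex), while $H:=\mathrm{conv}(\bm{\mathcal{R}})$ is a compact polytope. By the Weierstrass theorem $g$ attains a minimum over $H$ at some $\x^*\in H$. The whole statement then reduces to showing that this $\x^*$ is in fact a global minimizer of $g$ on $\R^{|\bm{\mathcal{V}}|}$, i.e.\ that the unconstrained optimum can be realised inside the hull.

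To pin down the global optima I would exploit that the $l_1$ distance splits coordinatewise: $g(\x)=\sum_{k=1}^{|\bm{\mathcal{V}}|} h_k(x_k)$ with $h_k(t):=\sum_j |t-r^j_k|$. Each $h_k$ is a univariate, convex, piecewise-linear function whose set of minimizers is the median interval $M_k=[\underline m_k,\overline m_k]$ spanned by two order statistics of the $k$-th coordinates $\{r^j_k\}_j$; in particular $M_k\subseteq[\min_j r^j_k,\max_j r^j_k]$. Consequently the set of global minimizers of $g$ is exactly the box $B:=\prod_{k} M_k$, which lies in the coordinatewise bounding box of $\bm{\mathcal{R}}$. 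With this description, the claim is equivalent to $B\cap H\neq\emptyset$, and it suffices to prove $\x^*\in B$.

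To show $\x^*\in B$, I would invoke the first-order optimality condition for the convex function $g$ over the convex set $H$: there is a subgradient $\bm{s}=\sum_j \bm{s}_j\in\partial g(\x^*)$, with $\bm{s}_j$ in the $l_1$-subdifferential (so $(\bm{s}_j)_k=\mathrm{sign}(x^*_k-r^j_k)$ whenever $x^*_k\neq r^j_k$ and $(\bm{s}_j)_k\in[-1,1]$ otherwise), lying in the negative normal cone $-N_H(\x^*)$. For every coordinate $k$ in which $\x^*$ can be perturbed in both directions inside $H$ this condition collapses to the balance (median) condition $\underline m_k\le x^*_k\le\overline m_k$, i.e.\ $x^*_k\in M_k$. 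The remaining coordinates, where $\x^*$ sits on the boundary of $H$, are handled by observing that if $x^*_k$ were strictly beyond $M_k$ then strictly more than half of the points $\bm{r}^j\in H$ lie on the median side of $x^*_k$, which I would use to produce a feasible direction inside $H$ along which $g$ strictly decreases, contradicting optimality of $\x^*$ over $H$.

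The hard part is exactly this last step. Because $l_1$ is neither smooth nor strictly convex, the usual Euclidean shortcut fails: the metric (Euclidean) projection of a point onto a convex set can strictly \emph{increase} its $l_1$-distance to points of the set (projecting $(0,1)$ onto the line $\{(t,2t):t\in\R\}$ yields $(0.4,0.8)$, whose $l_1$-distance to the origin is $1.2>1$), so one cannot simply project $\x^*$ onto $H$ and claim the objective does not grow. The delicate point is therefore to control the coupling between coordinates when moving inside $H$: a single reference point on the correct side of $x^*_k$ also displaces the other coordinates, so establishing that a genuinely $g$-decreasing feasible direction always exists requires combining the convexity of $H$ with the combinatorial median/normal-cone bookkeeping, rather than a one-line projection argument.
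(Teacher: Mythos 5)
Your reduction is sound as far as it goes: $g$ is coordinatewise separable, each $h_k$ is minimized exactly on the median interval $M_k$, so the set of global minimizers of~(\ref{eq:Weber_furtherrelaxed}) is precisely the box $B=\prod_k M_k$ and the lemma is equivalent to $B\cap\mathrm{conv}(\bm{\mathcal{R}})\neq\emptyset$. But the step you flag as ``the hard part'' --- producing a feasible descent direction inside $H=\mathrm{conv}(\bm{\mathcal{R}})$ whenever a coordinate of the constrained minimizer $\x^*$ leaves $M_k$ --- is not merely delicate; it is impossible in general, because the statement you are trying to prove is false once $|\bm{\mathcal{V}}|\geq 3$. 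Take $\bm{\mathcal{R}}=\{(1,0,0)^T,\,(0,1,0)^T,\,(0,0,1)^T\}\subset\R^3$. The coordinatewise medians are all $0$ (each $h_k(t)=|t-1|+2|t|$ has its unique minimum at $t=0$), so $B=\{\0\}$ and $g(\0)=3$. On the other hand, every $\x$ in the convex hull has $x_1+x_2+x_3=1$ with $x_j\geq 0$, whence $\lVert\x-\bm{r}^j\rVert_1=2(1-x_j)$ and $g(\x)=\sum_{j}2(1-x_j)=4$ identically on $H$. Thus the unique optimal solution of~(\ref{eq:Weber_furtherrelaxed}) is $\0\notin\mathrm{conv}(\bm{\mathcal{R}})$, and $B\cap H=\emptyset$. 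In your normal-cone language: on $H$ the constraint $x_1+x_2+x_3=1$ couples all three coordinates, every coordinate of every point of $H$ violates its median condition, yet no feasible direction decreases $g$ --- exactly the coupling you were worried about, realized in the worst possible way.

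For comparison, the paper does not prove the lemma at all; it cites \cite{RD94}, which characterizes the norms enjoying the convex hull property for Weber problems, and the $l_1$-norm in dimension at least three is not among them, as the example above shows (the property does hold for $l_1$ in the plane, which is presumably the picture behind Figure~\ref{fig:l1Algo}). So your proof cannot be completed, and the honest conclusion of your (correct) analysis is a counterexample rather than a proof: the lemma --- and the later claim that the optimal hyperrectangle of~(\ref{eq:Weber_furtherrelaxed}) always meets $\mathrm{conv}(\bm{\mathcal{R}})$ --- requires an additional hypothesis such as $|\bm{\mathcal{V}}|\leq 2$, or a smooth (e.g.\ Euclidean) distance in place of $l_1$, to be true.
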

\begin{proof}
	See \cite{RD94}.
\end{proof}
\begin{remark}
	If all points in the reference set $\bm{\mathcal{R}}$ are system valid and $X_{\bm{\mathcal{S}},\bm{\mathcal{C}}}$ is convex, Lemma~\ref{OptimalSolutionInConvexHull} ensures that there is an optimal solution of~(\ref{eq:Weber_furtherrelaxed}) that is system valid as well. However, if some of the reference points are only subsystem valid (and not system valid), this does not hold in general. This is illustrated by a small example in Figure~\ref{fig:SystemvalidCounterexample}.
\end{remark}

\begin{figure}
	\centering\small
\begin{tikzpicture}[scale=0.6]
\draw  (3,4) -- (6,2) -- (3,0) -- (0,2);
\draw  (7,4) -- (4,2) -- (7,0) -- (10,2);
\draw [thick,dash pattern={on 7pt off 2pt on 1pt off 3pt}] (0,2) -- (3,4);
\draw [thick, dashed] (10,2) -- (7,4);
\draw [thick, dotted] (3,4) -- (7,4);
\node [below] at (3,2.35) {$X_{1,\bm{\mathcal{C}}}$};
\node [below] at (7,2.35) {$X_{2,\bm{\mathcal{C}}}$};
\node [below] at (5,2.35) {$X_{\bm{\mathcal{S}},\bm{\mathcal{C}}}$};
\draw [fill=black] (3,4) circle (.4ex);
\node [below=-4pt] at (3,4.6) {$\bm{r}^1$};
\draw [fill=black] (7,4) circle (.4ex);
\node [below=-4pt] at (7,4.6) {$\bm{r}^2$};
\draw [fill=black] (4,4) circle (.4ex);
\node [below=-4pt] at (4,4.6) {$\bm{x}^*$};
\node [left] at (1.35,3) {$\Sup(1,1,\bm{\mathcal{C}})$};
\node [right] at (8.7,3) {$\Sup(2,2,\bm{\mathcal{C}})$};
\end{tikzpicture}
	\caption{\label{fig:SystemvalidCounterexample} Subsystem and system valid sets for a complex system in $\R^2$ decomposed into two subsystems $s_1$ and $s_2$. If the reference points $\bm{r}^1$ and $\bm{r}^2$ are chosen as depicted, then an optimal solution of problem \eqref{eq:Weber_furtherrelaxed} must lie on the dotted line, which does not intersect the system valid set.}
\end{figure}
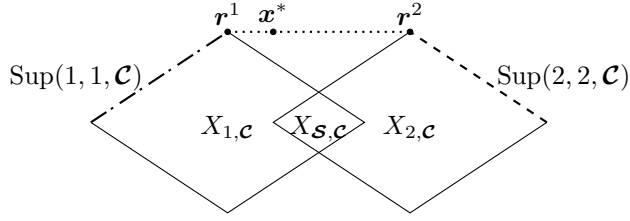

\subsection{Computing Compromise Solutions}

\noindent In the following we will adapt an algorithm proposed by \cite{RD94} to solve~(\ref{eq:Weber_furtherrelaxed}). It guarantees an optimal solution contained in the convex hull of the set of reference points.
Given a reference set $\bm{\mathcal{R}}$, a compromise solution $\bm{x}^*\in\R^{|\bm{\mathcal{V}}|}$ can be computed by solving 
\begin{align*}\min_{\x\in\mathbb{R}^{|\bm{\mathcal{V}}|}}\sum_{\bm{r}^{j}\in\bm{\mathcal{R}}} l_1(\bm{x},\bm{r}^{j}) & :=\min_{\x\in\mathbb{R}^{|\bm{\mathcal{V}}|}}\sum_{\bm{r}^{j}\in\bm{\mathcal{R}}}\sum_{k=1}^{|\bm{\mathcal{V}}|} |x_k-r_k^{j}| 
 =\sum_{k=1}^{|\bm{\mathcal{V}}|}\ \min_{\x\in\mathbb{R}^{|\bm{\mathcal{V}}|}}\sum_{\bm{r}^{j}\in\bm{\mathcal{R}}}\ \  |x_k-r_k^{j}|.
\end{align*}
Thus, the components of $\x$ can be optimized independently. We define
\begin{align*}x_k^*:= \argmin_{x_k\in\mathbb{R}}\sum_{\bm{r}^{j}\in\bm{\mathcal{R}}}\ \  |x_k-r_k^{j}| \quad\text{for all } k\in\{1,\dots ,|\bm{\mathcal{V}}|\}.
\end{align*}

\begin{lemma}\label{lemma:l1_optimality}
	Let $\bm{\mathcal{R}}$ be a reference set. If for $\x^*:=(x^*_1,\ldots,x^*_{|\bm{\mathcal{V}}|})^T$ it holds that
	\begin{align*}|\{\bm{r}^{j}\in \bm{\mathcal{R}}:{r}_k^{j}\leq x_k^*\} | &\geq |\{\bm{r}^{j}\in \bm{\mathcal{R}}:{r}_k^{j}> x_k^*\} |,\ k=1,\ldots,|\bm{\mathcal{V}}|, \;\text{and}\\
	|\{\bm{r}^{j}\in \bm{\mathcal{R}}:{r}_k^{j}\geq x_k^*\} | &\geq |\{\bm{r}^{j}\in \bm{\mathcal{R}}:{r}_k^{j}< x_k^*\} |,\ k=1,\ldots,|\bm{\mathcal{V}}|,\end{align*}
	then $\bm{x}^*$ is optimal for~(\ref{eq:Weber_furtherrelaxed}).
\end{lemma}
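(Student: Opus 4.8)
The plan is to exploit the coordinatewise separability of the objective that is displayed immediately before the lemma: since
\[
\sum_{\bm{r}^{j}\in\bm{\mathcal{R}}} l_1(\x,\bm{r}^{j}) = \sum_{k=1}^{|\bm{\mathcal{V}}|}\ \sum_{\bm{r}^{j}\in\bm{\mathcal{R}}} |x_k - r_k^{j}|,
\]
problem~(\ref{eq:Weber_furtherrelaxed}) decouples into $|\bm{\mathcal{V}}|$ independent univariate problems, one per coordinate $k$. It therefore suffices to show that each component $x_k^*$ minimizes the univariate function $g_k(t) := \sum_{\bm{r}^{j}\in\bm{\mathcal{R}}} |t - r_k^{j}|$ over $t\in\R$; because the objective is the sum over $k$ and $\x^*$ attains the minimum in every coordinate, optimality of $\x^*$ for~(\ref{eq:Weber_furtherrelaxed}) follows.

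Fix $k$. I would observe that $g_k$ is a finite sum of convex, piecewise-linear functions and hence itself convex and piecewise-linear. For such a function a point is a global minimizer precisely when its right derivative there is nonnegative and its left derivative there is nonpositive. Differentiating each summand $|t - r_k^{j}|$ at $t=x_k^*$ --- which contributes $+1$ if $r_k^{j}<x_k^*$, contributes $-1$ if $r_k^{j}>x_k^*$, and contributes $+1$ to the right derivative but $-1$ to the left derivative when $r_k^{j}=x_k^*$ --- gives
\begin{align*}
D^{+}g_k(x_k^*) &= \bigl|\{\bm{r}^{j}\in\bm{\mathcal{R}}:r_k^{j}\le x_k^*\}\bigr| - \bigl|\{\bm{r}^{j}\in\bm{\mathcal{R}}:r_k^{j}> x_k^*\}\bigr|,\\
D^{-}g_k(x_k^*) &= \bigl|\{\bm{r}^{j}\in\bm{\mathcal{R}}:r_k^{j}< x_k^*\}\bigr| - \bigl|\{\bm{r}^{j}\in\bm{\mathcal{R}}:r_k^{j}\ge x_k^*\}\bigr|.
\end{align*}
The first hypothesis of the lemma is exactly $D^{+}g_k(x_k^*)\ge 0$ and the second is exactly $D^{-}g_k(x_k^*)\le 0$, so $x_k^*$ is a global minimizer of $g_k$. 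As this holds for every $k$, the claim follows.

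I do not expect a serious obstacle: the only delicate point is the bookkeeping of the boundary terms with $r_k^{j}=x_k^*$, that is, making sure the strict versus non-strict inequalities in the two counting conditions are matched to the right and left one-sided derivatives (equivalently, to the condition $0\in\partial g_k(x_k^*)$). If one prefers to avoid derivatives entirely, an elementary alternative is to show directly that shifting $x_k^*$ by a small $\delta>0$ to either side cannot decrease $g_k$ --- the rightward shift being controlled by the first counting condition and the leftward shift by the second --- and then to invoke convexity of $g_k$ to promote this local statement to global optimality.
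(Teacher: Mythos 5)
Your argument is correct, and it is in fact more than the paper provides: the paper disposes of this lemma with a bare citation to Juel and Love (1980), whereas you give a self-contained proof. Your two ingredients are both sound. The coordinatewise separability is exactly the identity displayed in the paper just before the lemma, and it reduces the problem to minimizing each univariate function $g_k(t)=\sum_{\bm{r}^{j}\in\bm{\mathcal{R}}}|t-r_k^{j}|$ separately; since the objective is a sum over $k$ and each summand is minimized at $x_k^*$, the vector $\x^*$ is globally optimal. Your bookkeeping of the one-sided derivatives is also right: a term with $r_k^{j}=x_k^*$ contributes $+1$ to the right derivative and $-1$ to the left derivative, which is precisely why the non-strict inequality $r_k^{j}\le x_k^*$ appears in the condition $D^{+}g_k(x_k^*)\ge 0$ and the non-strict inequality $r_k^{j}\ge x_k^*$ in the condition $D^{-}g_k(x_k^*)\le 0$; together these say $0\in\partial g_k(x_k^*)$, which for a convex function characterizes global minimizers. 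This is the classical median characterization that the cited reference establishes, so you have essentially reconstructed the standard argument; the only cosmetic caveat is that the paper's counting is phrased over elements of $\bm{\mathcal{R}}$ while Example 4.8 treats coincident reference points as distinct, an ambiguity inherited from the paper rather than introduced by you.
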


\begin{figure}
	\centering
	\begin{tikzpicture}[scale=1]\small
	\draw [->] (0,0) -- (7,0);
	\draw (3,-0.1) -- (3.1,0.1) -- (3.2,-0.1) -- (3.3,0.1) -- (3.4,-0.1) -- (3.5,0.1) -- (3.6,-0.1) -- (3.7,0.1) -- (3.8,-0.1) -- (3.9,0.1) -- (4,-0.1) -- (4.1,0.1) -- (4.2,-0.1) -- (4.3,0.1) -- (4.4,-0.1) -- (4.5,0.1) -- (4.6,-0.1) -- (4.7,0.1) -- (4.8,-0.1) -- (4.9,0.1) -- (5,-0.1);
	\draw (1,0.08) -- (1,-0.08);
	\draw (1.4,0.08) -- (1.4,-0.08);
	\draw (3,0.08) -- (3,-0.15);
	\draw (5,0.08) -- (5,-0.15);
	\draw (5.5,0.08) -- (5.5,-0.08);
	\draw (6,0.08) -- (6,-0.08);
	\node [below] at (1,-0.08) {$\bm{r}^1$};
	\node [below] at (1.4,-0.08) {$\bm{r}^2$};
	\node [below] at (3,-0.15) {$\bm{r}^3$};
	\node [below] at (5,-0.15) {$\bm{r}^4$};
	\node [below] at (5.5,-0.08) {$\bm{r}^5$};
	\node [below] at (6,-0.08) {$\bm{r}^6$};
	\end{tikzpicture}
	\caption{\label{fig:l1onedim} Illustration of the set of optimal solutions (jagged lines) of problem~(\ref{eq:Weber_furtherrelaxed}) in one dimension.}
\end{figure}
\begin{proof}
	See \cite{Jue80}. 
\end{proof}
\begin{example} Figure~\ref{fig:l1onedim} illustrates a  simple one dimensional example. 
	If $\bm{x}^*$ is not between $\bm{r}^3$ and $\bm{r}^4$, it is not optimal. 
	Note that $\x^*$ is in general not unique.
\end{example}


Lemma \ref{lemma:l1_optimality} implies a simple and efficient algorithm for \eqref{eq:Weber_furtherrelaxed} that is based on the independent solution of $|\bm{\mathcal{V}}|$ subproblems. Since all considered distances are unweighted, these subproblems reduce to a simple counting problem in order to find the ``midpoint'' of the reference points in the considered component. 
The set of optimal solutions of \eqref{eq:Weber_furtherrelaxed} defines a hyperrectangle in $\R^{|\bm{\mathcal{V}}|}$ that has a nonempty intersection with the convex hull of the reference set. If the system valid set is convex, then the intersection with the system valid set is also non-empty. Algorithm~\ref{alg:l1Algorithm} thus first determines the hyperrectangle containing the optimal solutions of \eqref{eq:Weber_furtherrelaxed} (Steps 1-4) and then solves a simple linear system to determine one optimal solution that is within the convex hull of the reference set (Step 5).

\begin{algorithm}[!ht]\small
	\SetKw{Compute}{compute}
	\SetKw{Break}{break}
	\SetKwInOut{Input}{input}\SetKwInOut{Output}{output}
	\SetKwComment{command}{right mark}{left mark}
	
	\Input{a reference set $\bm{\mathcal{R}}$}
	
	\Output{a compromise solution $\x^*$}
	
	\BlankLine
	Set $l:=\left\lceil\frac{|\bm{\mathcal{R}}|}{2}\right\rceil$ and $u:=\left\lfloor\frac{|\bm{\mathcal{R}}|}{2}\right\rfloor+1$\\ 
	
	\For{$k=1,\dots ,|\bm{\mathcal{V}}|$ }{
		Sort $r_k^{j}$, $j=1,\dots,|\bm{\mathcal{R}}|$, such that $r^{\pi (1)}_k \leq\dots\leq r^{\pi (|\bm{\mathcal{R}}|)}_k$\\
		Set ${lb}_k:=r^{\pi (l)}_k$ and ${ub}_k:=r^{\pi (u)}_k$\\
	}	
	Solve the following linear system: 
	\begin{alignat*}{3}
	  \sum_{j=1}^{|\bm{\mathcal{R}}|}\lambda_{j}\bm{r}^{j} & =\x^*, &\qquad && {lb}_k\leq x_k^* & \leq {ub}_k \quad  \forall k=1,\dots,|\bm{\mathcal{V}}|,\\
	  \sum_{j=1}^{|\bm{\mathcal{R}}|}\lambda_{j} & =1, &&& \lambda_{j} & \geq 0 \quad \forall j=1,\dots,|\bm{\mathcal{R}}|
	\end{alignat*}
	\Return $\bm{x}^*$
	\caption{$l_1$-Compromise Algorithm \label{alg:l1Algorithm}}
\end{algorithm}


\begin{example}\label{example:l1Algo}
	The following example is used to illustrate Algorithm~\ref{alg:l1Algorithm}, see also Figure~\ref{fig:l1Algo}.
	{\small
	\begin{equation}\notag
	(s_1)\,\begin{aligned}
	\min \;& f_1(x_1,x_2) = x_1+x_2 \\
	\text{s.t.} \; & 0\leq x_1,\, x_2 \leq 2\\
	& \frac{1}{2}\leq x_1+x_2\\
	\end{aligned}
	\hspace{0.3cm}
	(s_2)\,\begin{aligned}
	\min \;& f_2(x'_1,x'_2) = \frac{2}{3}x'_1-x'_2 \\
	\text{s.t.} \; &   \frac{2}{3}x'_1-x'_2 \leq 1\\
	&   \frac{3}{2}x'_1-x'_2 \leq \frac{5}{2}\\
	\end{aligned}
	\hspace{0.3cm}
	(s_3)\,\begin{aligned}
	\min \;& f_3(x''_1) = -x''_1 \\
	\text{s.t.} \; &  0\leq x''_1 \leq 3\\
	\end{aligned}
	\end{equation}}
	Additionally, there are the following easy-linking constraints: $c_1(x_1,\, x'_1)=x_1-x'_1=0$, $c_2(x_1,\, x''_1)=x_1-x''_1=0$, $c_3(x'_1,\, x''_1)=x'_1-x'_1=0$ and $c_4(x_2,\, x'_2)=x_2-x'_2=0$. 
	The points $\bm{r}^{1}=(0.5,0)^T$, $\bm{r}^{2}=(0.5,0)^T$, $\bm{r}^{3}=(1.5,0)^T$, $\bm{r}^{4}=(1.8,0.2)^T$ and $\bm{r}^{5}=(2,0.5)^T$, $\bm{r}^{6}=(2,0.5)^T$ are used as reference points in Algorithm~\ref{alg:l1Algorithm}. The gray rectangle depicts the set of optimal solutions of \ref{eq:Weber_furtherrelaxed} using these reference points. The dashed area are those optimal solutions of \ref{eq:Weber_furtherrelaxed} that are elements of the convex hull of the reference set as well. One element of this set is the solution of Algorithm~\ref{alg:l1Algorithm}.
\end{example}

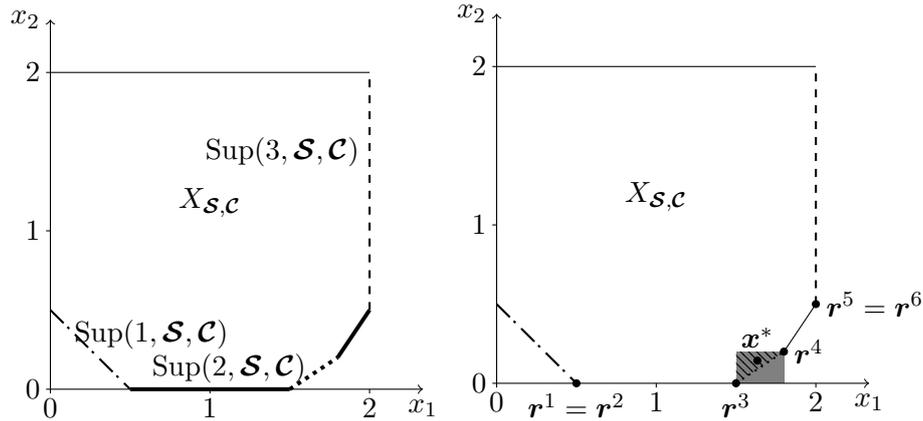
\begin{figure}
	\centering
	\begin{tikzpicture}[scale=0.7]
	\draw [->] (0,0) -- (7,0);
	\draw [->] (0,0) -- (0,7);
	\draw (0,0) -- (0,-0.08);
	\node [below] at (0,0) {$0$};
	\draw (3,0) -- (3,-0.08);
	\node [below] at (3,0) {$1$};
	\draw (6,0) -- (6,-0.08);
	\node [below] at (6,0) {$2$};
	\node [below] at (7,0) {$x_1$};
	\draw (0,0) -- (-0.08,0);
	\node [left] at (0,0) {$0$};
	\draw (0,3) -- (-0.08,3);
	\node [left] at (0,3) {$1$};
	\draw (0,6) -- (-0.08,6);
	\node [left] at (0,6) {$2$};
	\node [left] at (0,7) {$x_2$};
	\draw  [line width=1.5pt] (5.4,0.6) -- (6,1.5);
	\draw [line width=1.5pt, dotted] (4.5,0) -- (5.4,0.6);
	\draw [line width=1.5pt] (1.5,0) -- (4.5,0);
	\draw (6,6) -- (0,6);
	\draw [thick,dash pattern={on 7pt off 2pt on 1pt off 3pt}] (0,1.5) -- (1.5,0);
	\draw [thick, dashed] (6,1.5) -- (6,6);
	\node [below] at (3,4) {$X_{\bm{\mathcal{S}},\bm{\mathcal{C}}}$};
	\node [below] at (1.9,1.5) {$\Sup(1,\bm{\mathcal{S}},\bm{\mathcal{C}})$};
	\node [left] at (5,0.4) {$\Sup(2,\bm{\mathcal{S}},\bm{\mathcal{C}})$};
	\node [left] at (6,4.5) {$\Sup(3,\bm{\mathcal{S}},\bm{\mathcal{C}})$};
	\end{tikzpicture}\begin{tikzpicture}[scale=0.7]
	\fill[color=gray] (4.5,0) rectangle (5.4,0.6);
	\draw [->] (0,0) -- (7,0);
	\draw [->] (0,0) -- (0,7);
	\draw (0,0) -- (0,-0.08);
	\node [below] at (0,0) {$0$};
	\draw (3,0) -- (3,-0.08);
	\node [below] at (3,0) {$1$};
	\draw (6,0) -- (6,-0.08);
	\node [below] at (6,0) {$2$};
	\node [below] at (7,0) {$x_1$};
	\draw (0,0) -- (-0.08,0);
	\node [left] at (0,0) {$0$};
	\draw (0,3) -- (-0.08,3);
	\node [left] at (0,3) {$1$};
	\draw (0,6) -- (-0.08,6);
	\node [left] at (0,6) {$2$};
	\node [left] at (0,7) {$x_2$};
	\fill[pattern=north west lines, pattern color=black] (4.5,0) -- (5.4,0.6) -- (4.5,0.6) -- (4.5,0);
	\draw  (5.4,0.6) -- (6,1.5);
	\draw [thick, dotted] (4.5,0) -- (5.4,0.6);
	\draw (6,6) -- (0,6);
	\draw [thick,dash pattern={on 7pt off 2pt on 1pt off 3pt}] (0,1.5) -- (1.5,0);
	\draw [thick, dashed] (6,1.5) -- (6,6);
	\node [below] at (3,4) {$X_{\bm{\mathcal{S}},\bm{\mathcal{C}}}$};
	\draw [fill=black] (1.5,0) circle (.4ex);
	\node [below] at (1.5,0) {$\bm{r}^{1}=\bm{r}^{2}$};
	\draw [fill=black] (6,1.5) circle (.4ex);
	\node [right] at (6,1.5) {$\bm{r}^{5}=\bm{r}^{6}$};
	\draw [fill=black] (4.5,0) circle (.4ex);
	\node [below] at (4.5,0) {$\bm{r}^{3}$};
	\draw [fill=black] (5.4,0.6) circle (.4ex);
	\node [right] at (5.4,0.6) {$\bm{r}^{4}$};
	\draw [fill=black] (4.9,0.43) circle (.4ex);
	\node [above] at (4.9,0.48) {$\bm{x}^*$};
	\end{tikzpicture}
	\caption{\label{fig:l1Algo} Illustration of Example~\ref{example:l1Algo}. The left subfigure depicts the sets $\Sup(1,\bm{\mathcal{S}},\bm{\mathcal{C}})$, $\Sup(2,\bm{\mathcal{S}},\bm{\mathcal{C}})$ and $\Sup(3,\bm{\mathcal{S}},\bm{\mathcal{C}})$. In addition the set $\Sup(\bm{\mathcal{S}},\bm{\mathcal{S}},\bm{\mathcal{C}})$ is highlighted with thick lines.
		The result of algorithm~\ref{alg:l1Algorithm} is shown in the right subfigure.}
\end{figure}

\section{Conclusions}\label{sec:conclusion}

\noindent In this paper, we have introduced a new complex system, composed of several multiobjective optimization problems, using a graph-based model. It was necessary to extend the classical concept of efficiency -~called superiority~- to consider only specific sets of objective functions, subsystem constraints, and linking constraints.

Furthermore, we proposed different methods and algorithms to solve the optimization problem associated with a complex system. We generalized the concept of the ideal point to obtain lower bounds. Hierarchical and scalarization based algorithms are elaborated to compute (weakly) superior solutions. In this context, we defined \(\varepsilon\)-superiority to approximate the set of superior solutions. Additionally, we proposed a concept of compromise and stated an algorithm to compute the set of compromise solutions. 

In the future it will be desirable to apply the presented theory and methodology to a real-world complex system.
\newline

\textbf{Acknowledgments.}
This work was supported by the bilateral cooperation project
\emph{Mathematical Modeling of Complex Systems} funded by the Deutscher Aka\-de\-mi\-scher Austauschdienst (DAAD, Project-ID 57211227), Deutsche Forschungs\-gemeinschaft (DFG, Project-ID RU 1524/2-3), Bundes\-ministerium f\"ur Bildung und Forschung (BMBF, Project-ID 13N14561) and by the project KoLBi (BMBF, Project-ID 01JA1507). The last co-author recognizes partial support from the United States Office of Naval Research through grant number N00014-16-1-2725.





\begin{thebibliography}{00}



\bibitem[Aonuma(1982)]{AonuArec1982}
T.~Aonuma.
\newblock A resource-directive decomposition algorithm for weakly coupled
dynamic linear programs.
\newblock \emph{Mathematische Operationsforschung und Statistik. Series
	Optimization}, 13\penalty0 (1):\penalty0 39--58, 1982.

\bibitem[Bard(1998)]{Bard1998}
J.~F. Bard.
\newblock \emph{Practical Bilevel Optimization: Algorithms and Applications}.
\newblock Kluwer, Dordrecht, 1998.

\bibitem[Bertsekas and Tsitsiklis(1997)]{BertPara1997}
D.~P. Bertsekas and J.~N. Tsitsiklis.
\newblock \emph{Parallel and distributed computation: numerical methods}.
\newblock Athena Scientific, Belmont, MA, 1997.

\bibitem[Ciucci et~al.(2012)Ciucci, Honda, and Yang]{CiucAnin2012}
F.~Ciucci, T.~Honda, and M.~C. Yang.
\newblock An information-passing strategy for achieving {P}areto optimality in
the design of complex systems.
\newblock \emph{Research in engineering design}, 23\penalty0 (1):\penalty0
71--83, 2012.

\bibitem[Dandurand and Wiecek(2015)]{DW:2015}
B.~Dandurand and M.~Wiecek.
\newblock Distributed computation of {P}areto sets.
\newblock \emph{SIAM Journal on Optimization}, 25\penalty0 (2):\penalty0
1083--1109, 2015.

\bibitem[Di~Matteo et~al.(2017)Di~Matteo, Dandy, and Maier]{diMaMult2017}
M.~Di~Matteo, G.~C. Dandy, and H.~R. Maier.
\newblock Multiobjective optimization of distributed stormwater harvesting
systems.
\newblock \emph{Journal of Water Resources Planning and Management},
143\penalty0 (6):\penalty0 1--13, 2017.

\bibitem[Ehrgott(2005)]{Ehrgott}
M.~Ehrgott.
\newblock \emph{Multicriteria Optimization}.
\newblock Springer, New York, 2005.

\bibitem[Ehtamo et~al.(2001)Ehtamo, Kettunen, and
H{\"a}m{\"a}l{\"a}inen]{EhtaSear2001}
H.~Ehtamo, E.~Kettunen, and R.~P. H{\"a}m{\"a}l{\"a}inen.
\newblock Searching for joint gains in multi-party negotiations.
\newblock \emph{European Journal of Operational Research}, 130:\penalty0
54--69, 2001.

\bibitem[Engau(2010)]{EngaInte2010}
A.~Engau.
\newblock Interactive decomposition-coordination methods for complex decision
problems.
\newblock In \emph{Handbook of Multicriteria Analysis}, pages 329--365.
Springer, 2010.

\bibitem[Fernandez and Olmedo(2013)]{FernAnou2013}
E.~Fernandez and R.~Olmedo.
\newblock An outranking-based general approach to solving group multi-objective
optimization problems.
\newblock \emph{European Journal of Operational Research}, 225\penalty0
(3):\penalty0 497--506, 2013.

\bibitem[Fulga(2007)]{FulgDece2007}
C.~Fulga.
\newblock Decentralized cooperative optimization for multi-criteria decision
making.
\newblock In \emph{Advances in Cooperative Control and Optimization}, pages
65--80. Springer, 2007.

\bibitem[Gardenghi et~al.(2011)Gardenghi, G{\'o}mez, Miguel, and
Wiecek]{gard:alge:2011}
M.~Gardenghi, T.~G{\'o}mez, F.~Miguel, and M.~Wiecek.
\newblock Algebra of efficient sets for multiobjective complex systems.
\newblock \emph{Journal of Optimization Theory and Applications}, 149:\penalty0
385--410, 2011.

\bibitem[Guarneri and Wiecek(2016)]{GW:2015}
P.~Guarneri and M.~Wiecek.
\newblock Pareto-based negotiation in distributed multidisciplinary design.
\newblock \emph{Structural and Multidisciplinary Optimization}, 53\penalty0
(4):\penalty0 657--671, 2016.

\bibitem[Haimes et~al.(1990)Haimes, Tarvainen, Shima, and
Thadathil]{HaimHier1990}
Y.~Haimes, K.~Tarvainen, T.~Shima, and J.~Thadathil.
\newblock Hierarchical multiobjective analysis of large-scale systems, 1990.

\bibitem[Heiskanen(1999)]{HeisDece1999}
P.~Heiskanen.
\newblock Decentralized method for computing {P}areto solutions in multiparty
negotiations.
\newblock \emph{European Journal of Operational Research}, 117:\penalty0
578--590, 1999.

\bibitem[Heiskanen et~al.(2001)Heiskanen, Ehtamo, and
H{\"a}m{\"a}l{\"a}inen]{HeisCons2001}
P.~Heiskanen, H.~Ehtamo, and R.~P. H{\"a}m{\"a}l{\"a}inen.
\newblock Constraint proposal method for computing {P}areto solutions in
multi-party negotiations.
\newblock \emph{European Journal of Operational Research}, 133:\penalty0
44--61, 2001.

\bibitem[Ji et~al.(2014)Ji, Liu, and Liao]{JiLiOnre2014}
L.~Ji, Q.~Liu, and X.~Liao.
\newblock On reaching group consensus for linearly coupled multi-agent
networks.
\newblock \emph{Information Sciences}, 287:\penalty0 1--12, 2014.

\bibitem[Juel(1980)]{Jue80}
R.~Juel, H.;~Love.
\newblock Hull {P}roperties in {L}ocation {P}roblems.
\newblock \emph{Working Paper No. 166; Faculty of Business McMaster University,
	Hamilton, Ontario}, 1980.

\bibitem[Kang et~al.(2014)Kang, Kokkolaras, and Papalambros]{Kangetal2014}
N.~Kang, M.~Kokkolaras, and P.~Y. Papalambros.
\newblock Solving multiobjective optimization problems using quasi-separable
mdo formulations and analytical target cascading.
\newblock \emph{Structural and Multidisciplinary Optimization}, 50:\penalty0
849--859, 2014.

\bibitem[Kim et~al.(2003)Kim, Rideout, Papalambros, and Stein]{Kimetal2003}
H.~M. Kim, D.~G. Rideout, P.~Y. Papalambros, and J.~L. Stein.
\newblock Analytical target cascading in automotive vehicle design.
\newblock \emph{Journal of Mechanical Design}, 125\penalty0 (3):\penalty0
481--489, 2003.

\bibitem[Konnov(2013)]{KonnVect2013}
I.~V. Konnov.
\newblock Vector network equilibrium problems with elastic demands.
\newblock \emph{Journal of Global Optimization}, 57\penalty0 (2):\penalty0
521--531, 2013.

\bibitem[Leverenz et~al.(2016)Leverenz, Xu, and Wiecek]{LeveMult2016}
J.~Leverenz, M.~Xu, and M.~M. Wiecek.
\newblock Multiparametric optimization for multidisciplinary engineering
design.
\newblock \emph{Structural and Multidisciplinary Optimization}, 54\penalty0
(4):\penalty0 795--810, 2016.

\bibitem[Lewis and Mistree(1997)]{lewis1997}
K.~Lewis and F.~Mistree.
\newblock Modeling interactions in multidisciplinary design: a game theoretic
approach.
\newblock \emph{{AIAA} Journal}, 35\penalty0 (8):\penalty0 1387--1392, 1997.

\bibitem[Li et~al.(2014)Li, Wu, and Hu]{LiWuDesi2014}
F.~Li, T.~Wu, and M.~Hu.
\newblock Design of a decentralized framework for collaborative product design
using memetic algorithms.
\newblock \emph{Optimization and Engineering}, 15\penalty0 (3):\penalty0
657--676, 2014.

\bibitem[Lou and Wang(2016)]{LouWAppr2016}
Y.~Lou and S.~Wang.
\newblock Approximate representation of the {P}areto frontier in multiparty
negotiations: Decentralized methods and privacy preservation.
\newblock \emph{European Journal of Operational Research}, 254\penalty0
(3):\penalty0 968--976, 2016.

\bibitem[Luca and Pillo(1987)]{LucaExac1987}
A.~d. Luca and G.~d. Pillo.
\newblock Exact augmented lagrangian approach to multilevel optimization of
large-scale systems.
\newblock \emph{International Journal of Systems Science}, 18\penalty0
(1):\penalty0 157--176, 1987.

\bibitem[Martins and Lambe(2013)]{MarLam2013}
J.~R. R.~A. Martins and A.~B. Lambe.
\newblock Multidisciplinary design optimization: A survey of architectures.
\newblock \emph{AIAA Journal}, 51\penalty0 (9):\penalty0 2049--2075, 2013.

\bibitem[Miguel et~al.(2009)Miguel, G{\'o}mez, Luque, Ruiz, and
Caballero]{MiguAdec2009}
F.~Miguel, T.~G{\'o}mez, M.~Luque, F.~Ruiz, and R.~Caballero.
\newblock A decomposition-coordination method for complex multi-objective
systems.
\newblock \emph{Asia-Pacific Journal of Operational Research}, 26\penalty0
(6):\penalty0 735--757, 2009.

\bibitem[Naz et~al.(2016)Naz, Mushtaq, Naeem, Iqbal, Altaf, and
Haneef]{NazMMult2016}
M.~N. Naz, M.~I. Mushtaq, M.~Naeem, M.~Iqbal, M.~W. Altaf, and M.~Haneef.
\newblock Multicriteria decision making for resource management in renewable
energy assisted microgrids.
\newblock \emph{Renewable and Sustainable Energy Reviews}, 71:\penalty0
323--341, 2016.

\bibitem[Nolan et~al.(2007)Nolan, Jin, and Chunhang]{Nolan2007}
P.~Nolan, Z.~Jin, and L.~Chunhang.
\newblock \emph{The Global Business Revolution and the Cascade Effect: Systems
	Integration in the Aerospace, Beverages and Retail Industries}.
\newblock Springer, 2007.

\bibitem[{Ringuest J.L.}(1992)]{RJ92}
{Ringuest J.L.}
\newblock Compromise {P}rogramming.
\newblock \emph{Multiobjective Optimization: Behavioral and Computational
	Considerations. Springer, Boston, MA}, 1992.

\bibitem[{Roland Durier}(1994)]{RD94}
{Roland Durier}.
\newblock Convex hull properties in location theory.
\newblock \emph{Numerical Functional Analysis and Optimization}, \penalty0
(15:5-6):\penalty0 567--582, 1994.

\bibitem[Roth(1985)]{RothSome1985}
A.~E. Roth.
\newblock Some additional thoughts on post-settlement settlements.
\newblock \emph{Negotiation Journal}, 1\penalty0 (3):\penalty0 245--247, 1985.

\bibitem[Shimizu and Aiyoshi(1981)]{ShimHier1981}
K.~Shimizu and E.~Aiyoshi.
\newblock Hierarchical multi-objective decision systems for general resource
allocation problems.
\newblock \emph{Journal of Optimization Theory and Applications}, 35\penalty0
(4):\penalty0 517--533, 1981.

\bibitem[Shimizu et~al.(1997)Shimizu, Ishizuka, and Bard]{ShimIshBa1997}
K.~Shimizu, Y.~Ishizuka, and J.~F. Bard.
\newblock \emph{Nondifferentiable and Two-Level Mathematical Programming}.
\newblock Kluwer, 1997.

\bibitem[Simpson(1750)]{Sim50}
T.~Simpson.
\newblock \emph{The {D}octrine and {A}pplication of {F}luxions}.
\newblock London, 1750.

\bibitem[Stummer and Vetschera(2003)]{StumDece2003}
C.~Stummer and R.~Vetschera.
\newblock Decentralized planning for multiobjective resource allocation and
project selection.
\newblock \emph{Central European Journal of Operations Research}, 11\penalty0
(3):\penalty0 253--279, 2003.

\bibitem[Teich et~al.(1996)Teich, Wallenius, Wallenius, and
Zionts]{TeicIden1996}
J.~E. Teich, H.~Wallenius, J.~Wallenius, and S.~Zionts.
\newblock Identifying {P}areto-optimal settlements for two-party resource
allocation negotiations.
\newblock \emph{European Journal of Operational Research}, 93\penalty0
(3):\penalty0 536--549, 1996.

\bibitem[Tosserams et~al.(2008)Tosserams, Etman, and Rooda]{TossAugm2008}
S.~Tosserams, L.~Etman, and J.~Rooda.
\newblock Augmented lagrangian coordination for distributed optimal design in
mdo.
\newblock \emph{International journal for numerical methods in engineering},
73\penalty0 (13):\penalty0 1885--1910, 2008.

\bibitem[Tosserams et~al.(2009)Tosserams, Etman, and Rooda]{TossBloc2009}
S.~Tosserams, L.~F. Etman, and J.~Rooda.
\newblock Block-separable linking constraints in augmented lagrangian
coordination.
\newblock \emph{Structural and Multidisciplinary Optimization}, 37\penalty0
(5):\penalty0 521--527, 2009.

\bibitem[Vanderplaats(2007)]{Vander2007}
G.~N. Vanderplaats.
\newblock \emph{Multidiscipline Design Optimization}.
\newblock Vanderplaats Research {\&} Development, Inc., 2007.

\end{thebibliography}


\end{document}